\newcommand{\A}{{\mathcal{A}}}
\newcommand{\br}[3]{{$#1$}$\lower4pt\hbox{$\tp\atop\raise4pt \hbox{$\scriptscriptstyle{#2}$}$} ${$#3$}}
\newcommand{\tw}[3]{{$#1$}${\,\scriptscriptstyle {#2}}\atop\raise9pt\hbox{$\scriptstyle\tp$} ${$#3$}}
\newcommand{\ttps}[2]{{#1}\raise5pt\hbox{$\lower12pt\hbox{$\scriptstyle\tp$}\atop \lower0pt\hbox{$\tilde\;$}$}\raise4.5pt\hbox{${\scriptstyle{#2}}$}}
\newcommand{\st}[1]{\mbox{${\,\scriptscriptstyle {#1}}\atop\raise5.5pt\hbox{$*$}$}}
\newcommand{\rd}[1]{\mbox{${\,\scriptscriptstyle {#1}}\atop\raise5.5pt\hbox{$\bullet$}$}}
\newcommand{\rt}[1]{\otimes_\chi}
\newcommand{\lt}[1]{\mbox{${\,\scriptscriptstyle {#1}}\atop\raise5.5pt\hbox{$\ltimes$}$}}
\newcommand{\btr}{\raise1.2pt\hbox{$\scriptstyle\blacktriangleright$}\hspace{2pt}}
\newcommand{\btl}{\raise1.2pt\hbox{$\scriptstyle\blacktriangleleft$}\hspace{2pt}}
\newcommand{\lcr}{\raise1.0pt \hbox{${\scriptstyle\rightharpoonup}$}}
\newcommand{\rcr}{\raise1.0pt \hbox{${\scriptstyle\leftharpoonup}$}}
\newcommand{\ttp}{{\lower12pt\hbox{$\tp$}\atop \hbox{$\tilde\;$}}}
\newcommand{\id}{\mathrm{id}}
\newcommand{\f}{\mathfrak{f}}
\newcommand{\Ru}{\mathcal{R}}
\newcommand{\E}{\mathcal{E}}
\newcommand{\Mc}{\mathcal{M}}
\newcommand{\Q}{\mathcal{Q}}
\renewcommand{\O}{\mathcal{O}}
\newcommand{\C}{\mathbb{C}}
\newcommand{\Z}{\mathbb{Z}}
\newcommand{\N}{\mathbb{N}}
\newcommand{\tp}{\otimes}
\newcommand{\vt}{\vartheta}
\newcommand{\U}{U}
\newcommand{\ve}{\varepsilon}
\newcommand{\gm}{\gamma}
\newcommand{\dt}{\delta}
\newcommand{\op}{\oplus}
\newcommand{\la}{\lambda}
\newcommand{\End}{\mathrm{End}}
\newcommand{\Span}{\mathrm{Span}}
\newcommand{\Tr}{\mathrm{Tr}}
\newcommand{\Rm}{\mathrm{R}}
\newcommand{\ad}{\mathrm{ad}}
\newcommand{\La}{\Lambda}
\newcommand{\g}{\mathfrak{g}}
\renewcommand{\b}{\mathfrak{b}}
\renewcommand{\k}{\mathfrak{k}}
\newcommand{\h}{\mathfrak{h}}
\newcommand{\mub}{\boldsymbol{\mu}}
\newcommand{\nb}{\boldsymbol{n}}
\newcommand{\s}{\mathfrak{s}}
\renewcommand{\o}{\mathfrak{o}}
\newcommand{\n}{\mathfrak{n}}
\newcommand{\m}{\mathfrak{m}}
\newcommand{\nn}{\nonumber}
\newcommand{\p}{\mathfrak{p}}
\renewcommand{\l}{\mathfrak{l}}
\renewcommand{\c}{\mathfrak{c}}
\newcommand{\al}{\alpha}
\newcommand{\bt}{\beta}
\newcommand{\be}{\begin{eqnarray}}
\newcommand{\ee}{\end{eqnarray}}
\newtheorem{thm}{Theorem}[section]
\newtheorem{propn}[thm]{Proposition}
\newtheorem{lemma}[thm]{Lemma}
\newtheorem{corollary}[thm]{Corollary}
\newtheorem{remark}[thm]{Remark}
\newtheorem{definition}[thm]{Definition}
\newcommand{\parag}{\advance\prg by1 {\noindent\bf\thesection.\the\prg\hspace{6pt}}}
\begin{document}
\title{Non-Levi closed conjugacy classes of  $SO_q(N)$}
\author{
Andrey Mudrov \vspace{20pt}\\
\small Department of Mathematics,\\ \small University of Leicester, \\
\small University Road,
LE1 7RH Leicester, UK\\
\small e-mail: am405@le.ac.uk\\
}

\date{}
\maketitle

\begin{abstract}
We construct explicit quantization of semisimple  conjugacy classes of the complex orthogonal group
$SO(N)$ with non-Levi isotropy subgroups through an operator realization
on  highest weight modules of the quantum group $U_q\bigl(\s\o(N)\bigr)$.
\end{abstract}

{\small \underline{Mathematics Subject Classifications}: 81R50, 81R60, 17B37.
}

{\small \underline{Key words}: Quantum groups, deformation quantization, conjugacy classes, representation theory.
}
\section{Introduction}
This is a continuation of our recent works \cite{M3,M5} on quantization of closed conjugacy classes
of simple complex algebraic groups with non-Levi stabilizers.
There, we extended the methods that had been developed  in \cite{M2} for classes with Levi isotropy subgroups
to the non-Levi case, with the focus on the symplectic groups. In this paper, we apply those ideas  to the orthogonal
groups. This solves the quantization  problem for all non-Levi conjugacy classes of simple complex matrix groups.
Along with  \cite{M3}--\cite{DM1}, this result yields quantization of all semisimple classes of
symplectic and special linear groups and "almost all" classes of orthogonal groups.
A "thin" family of orthogonal classes with Levi stabilizer is left beyond our scope. This family
can be called "borderline" as it shares some properties of non-Levi classes. We  give
a special consideration to this case in a separate publication, \cite{AM}. Then the quantization problem will be closed for all semisimple conjugacy
classes of the four classical series of simple groups.

Observe that semisimple conjugacy classes of simple complex groups fall into two families distinguished by the type of
their isotropy subgroup: whether it is  Levi or not.
With regard to the classical matrix series, the second type appears only in the symplectic and orthogonal cases.
In the orthogonal case, the isotropy subgroup of a given class is not Levi if and only if  the eigenvalues $\pm 1$ are both present in the spectrum,
the multiplicity of $-1$ is at least $4$ and,  for even $N$, the multiplicity of $1$ is at least $4$ too.
This is what we assume in this paper.

If both eigenvalues $\pm1$ are present but the multiplicity of $-1$ (or $+1$) is $2$, the stabilizer is of Levi type:
the block $SO(2)$ rotating this eigenspace is isomorphic to $GL(1)$. Quantization
of these classes  methodologically  lies in between of  \cite{M2} and  the present work. We postpone this case
to a separate study, in order to simplify the current presentation.

Recall that closed conjugacy classes are affine subvarieties of the algebraic group $G$ of complex orthogonal $N\times N$-matrices,
\cite{St}.
We consider them as Poisson homogeneous spaces over the Poisson group $G$ equipped with the
Drinfeld-Sklyanin bracket. Their Poisson structure restricts from a Poisson structure on $G$, which
is different from the Drinfeld-Sklyanin bracket. The group itself and the conjugacy classes are
Poisson manifolds over the Poisson group $G$ with respect to the adjoint action. We are searching for quantization of the
affine coordinate ring of a class as a quotient of the quantized algebra, $\C_\hbar[G]$, of polynomial functions on $G$.

The algebra  $\C_\hbar[G]$ should not be confused with the restricted dual to the quantized universal enveloping algebra.
As  above said, they are quantizations of different Poisson structures on $G$. Rather, the algebra  $\C_\hbar[G]$
is closer to $U_q(\g)$ than to its dual and can be realized as a subalgebra in $U_q(\g)$. Therefore, $\C_\hbar[G]$ is
represented on all $U_q(\g)$-modules.
We find a $U_q(\g)$-module of highest weight such that the quotient of $\C_\hbar[G]$ by the annihilator
is a deformation of the polynomial ring of a non-Levi conjugacy class.
Contrary to the Levi case, it is not a parabolic Verma module.

The key step of our approach is finding an appropriate submodule in an auxiliary parabolic Verma module $\hat M_\la$
and pass to the quotient module $M_\la$, to realize the quantized coordinate ring of the class $G/K$
by linear operators from $\End(M_\la)$.
The module $\hat M_\la$ is associated with the quantum universal enveloping subalgebra $U_q(\l)\subset U_q(\g)$ of
a certain auxiliary Levi subgroup $L\subset G$. The subgroup $L$ is maximal among those contained in the stabilizer $K$.
We obtain $L$ by reducing the orthogonal block $SO(2m)\subset K$, which rotates the eigenspace of $-1$, to  $GL(m)\subset SO(2m)$.

Having constructed $M_\la$ we proceed to the study of the $U_q(\g)$-module $\C^N\tp M_\la$.
We find the spectrum and the minimal polynomial of the image of an invariant matrix $\Q\in \End(\C^N)\tp U_q(\g)$ in
$\End(\C^N\tp M_\la)$ whose entries generate the algebra $\C_\hbar[G/K]$.
We start from the  minimal polynomial on $\C^N\tp \hat M_\la$, which is known from \cite{M2}.
Further we analyze the  structure of $\C^N\tp \hat M_\la$ and show that a submodule responsible for a simple divisor
becomes invertible under the projection $\C^N\tp \hat M_\la \to\C^N\tp M_\la$ and drops from the minimal polynomial
of $\Q$.
This reduction yields a polynomial identity on $\Q$ which determines the conjugacy class in the classical limit.

As a result, we obtain an explicit expression of the annihilator of $M_\la$ in $\C_\hbar[G]$ in terms of the
"quantum coordinate matrix" $\Q$. This annihilator is the quantized defining ideal of the class $G/K$.
This way we obtain an explicit description of $\C_\hbar[G/K]$ as a quotient
of $\C_\hbar[G]$, in terms of generators and relations.

Non-Levi conjugacy classes include symmetric spaces $SO(N)/SO(2m)\times SO(N-2m)$.
Their quantum counterparts were studied in connection with in integrable models, \cite{FZ}, and representation theory, \cite{K}--\cite{L3}.  Contrary to our approach, quantum symmetric spaces were
viewed as subalgebras in the Hopf dual to $U_\hbar(\g)$ annihilated by certain
coideal subalgebras, the quantum stabilizers, \cite{NS}--\cite{L4}.
That is possible for symmetric classes since they admit "classical points", where the Poisson bracket turns zero.
At the quantum level, classical points give rise to one-dimensional representations of $\C_\hbar[G]$. Other conjugacy classes do not
admit classical points, so our method of quantization remains the most general.
\section{Classical conjugacy classes}
\label{SecCCC}
Throughout the paper, $G$ designates the algebraic group $SO(N)$, $N\geqslant 7$ or $N=5$, of orthogonal matrices preserving
a non-degenerate symmetric bilinear form $(C_{ij})_{i,j=1}^{N}$ on the complex vector space $\C^{N}$; the
Lie algebra of $G$ will be denoted by $\g$.
We choose the realization  $C_{ij}=\delta_{ij'}$, where
$\dt_{ij}$ is the Kronecker symbol, and
$i'=N+1-i$ for $i=1,\ldots, N$.

The polynomial ring $\C[G]$ is generated by the matrix coordinate functions $(A_{ij})_{i,j=1}^{N}$,
modulo the set of $N^2$ relations written in the matrix form as
\be
ACA^t=C.
\label{ideal_group}
\ee
Strictly speaking, this equation defines the group $O(N)$, but we will ignore this distinction, because
the relation $\det A=1$ will be automatically covered by the defining relations of conjugacy classes.

The right conjugacy action of $G$ on itself induces a left  action on $\C[G]$ by duality; the matrix $A$  is invariant
as an element of $\End(\C^{N})\tp \C[G]$.

The group $G$ is equipped with the Drinfeld-Sklyanin bivector field
\be
\{A_1,A_2\}=\frac{1}{2}(A_2A_1r-r A_1A_2),
\label{poisson_br_DS}
\ee
where $r\in \g\tp \g$ is a solution of the classical Yang-Baxter equation, \cite{D}.
This equation is understood in $\End(\C^{N})\tp \End(\C^{N})\tp \C[G]$, and the
subscripts indicate the natural tensor factor embeddings of $\End(\C^{N})$ in $\End(\C^{N})\tp \End(\C^{N})$, as usual in the literature.

The bivector field (\ref{poisson_br_DS}) is skew-symmetric when restricted to  $G$ and
defines a Poisson  bracket making $G$ a Poisson group.
We fix the standard solution of the classical Yang-Baxter equation:
\be
r=\sum_{i=1}^{N}(e_{ii}\tp e_{ii}-e_{ii}\tp e_{i'i'})
+2\sum_{i,j=1\atop i>j}^{N}(e_{ij}\tp e_{ji}-e_{ij}\tp e_{i'j'}).
\ee
At the end of the article, we lift this restriction to include an arbitrary factorizable
r-matrix, \cite{BD}. This extends our results to arbitrary quasitriangular quantum orthogonal groups.

We regard the group $G$ as a $G$-space under the  conjugation action.
The object of our study is another Poisson structure on $G$,
\be
\{A_1,A_2\}=\frac{1}{2}(A_2r_{21}A_1-A_1rA_2+A_2A_1r-r_{21}A_1A_2),
\label{poisson_br_sts}
\ee
see \cite{STS}.
It is compatible with the conjugation action and makes $G$ a Poisson space
over the Poisson group $G$ equipped with the Drinfeld-Sklyanin bracket (\ref{poisson_br_DS}).

We reserve  $n$ to denote the rank of the Lie algebra $\s\o(N)$, so $N$ is either $2n$ or $2n+1$.
A semisimple conjugacy class $O\subset G$ consists of diagonalizable matrices and is
determined by the multi-set of eigenvalues
$S_O=\{\mu_i,\mu_i^{-1}\}_{i=1}^n\cup \{1\}$, where $\{1\}$ is present when $N$ is odd.
Every eigenvalue $\mu$ enters $S_O$ with its reciprocal $\mu^{-1}$ and, in particular, may degenerate to $\mu=\mu^{-1}=\pm 1$.
For a class to be non-Levi,  both $+1$ and $-1$ should be in  $S_O$. Moreover, the multiplicity of $-1$ is assumed to be $4$ or higher
as well as the multiplicity of $+1$ for even $N$.

In terms of Dynkin diagram, a  Levi subgroup is obtained by scraping out
a subset of nodes, while non-Levi isotropy subgroups are obtained from the affine Dynkin diagrams:
$$
\begin{picture}(150,40)
\put(60,25){Levi} \put(128,17){$\scriptstyle{ +1}$}
  \put(0,10){\circle{2}}  \put(1,10){\line(1,0){18}}
  \put(17,8){$\scriptstyle{\times}$}  \put(41,10){\line(1,0){18}}
  \put(40,10){\circle{2}}  \put(21,10){\line(1,0){18}}
  \put(60,10){\circle{2}}  \put(64,9.3){\ldots}
  \put(80,10){\circle{2}}  \put(81,10){\line(1,0){18}}
  \put(97,8){$\scriptstyle{\times}$}  \put(101,10){\line(1,0){18}}
  \put(120,10){\circle{2}}  \put(120,9){\line(1,0){16}}
  \put(140,10){\circle{2}}  \put(120,11){\line(1,0){16}}
   \put(132,7){$>$}
\end{picture}
\quad\quad
\begin{picture}(200,40)
\put(50,25){Non-Levi}
  \put(4,20){\circle{2}}  \put(4,0){\circle{2}}
  \put(19.2,10.7){\line(-5,3){14}}  \put(19.2,9.3){\line(-5,-3){14}}
  \put(20,10){\circle{2}}  \put(21,10){\line(1,0){18}}
  \put(37,8){$\scriptstyle{\times}$}  \put(41,10){\line(1,0){18}}
  \put(60,10){\circle{2}}  \put(64,9.3){\ldots}
  \put(80,10){\circle{2}}  \put(81,10){\line(1,0){18}}
  \put(97,8){$\scriptstyle{\times}$}  \put(101,10){\line(1,0){18}}
  \put(120,10){\circle{2}}     \put(132,7){$>$}\put(120,9){\line(1,0){16}}\put(120,11){\line(1,0){16}}
  \put(140,10){\circle{2}}
\put(128,17){$\scriptstyle{ +1}$}
\put(0,9){$\scriptstyle{\mp 1}$}
\put(160,9){$\g=\mathfrak{so}(2n+1)$}
\end{picture}
$$
$$
\begin{picture}(150,40)
  \put(0,10){\circle{2}}  \put(1,10){\line(1,0){18}}
  \put(17,8){$\scriptstyle{\times}$}  \put(41,10){\line(1,0){18}}
  \put(40,10){\circle{2}}  \put(21,10){\line(1,0){18}}
  \put(60,10){\circle{2}}  \put(64,9.3){\ldots}
  \put(80,10){\circle{2}}  \put(81,10){\line(1,0){18}}
  \put(97,8){$\scriptstyle{\times}$}  \put(101,10){\line(1,0){18}}
  \put(120,10){\circle{2}}
  \put(136,20){\circle{2}}  \put(135,0){\circle{2}}
  \put(121.2,10.7){\line(5,3){14}}  \put(121.2,9.3){\line(5,-3){14}}
\put(133,8){$\scriptstyle{\pm 1}$}
\end{picture}
\quad\quad
\begin{picture}(200,40)
  \put(4,20){\circle{2}}  \put(4,0){\circle{2}}
  \put(19.2,10.7){\line(-5,3){14}}  \put(19.2,9.3){\line(-5,-3){14}}
  \put(20,10){\circle{2}}  \put(21,10){\line(1,0){18}}
  \put(37,8){$\scriptstyle{\times}$}  \put(41,10){\line(1,0){18}}
  \put(60,10){\circle{2}}  \put(64,9.3){\ldots}
  \put(80,10){\circle{2}}  \put(81,10){\line(1,0){18}}
  \put(97,8){$\scriptstyle{\times}$}  \put(101,10){\line(1,0){18}}
 \put(120,10){\circle{2}}
  \put(136,20){\circle{2}}  \put(135,0){\circle{2}}
  \put(121.2,10.7){\line(5,3){14}}  \put(121.2,9.3){\line(5,-3){14}}
\put(133,8){$\scriptstyle{\pm 1}$}
\put(0,9){$\scriptstyle{\mp 1}$}
\put(160,9){$\g=\mathfrak{so}(2n)$}
\end{picture}
$$
In other words, a non-Levi subgroup necessarily contains a semisimple orthogonal block of even dimension
rotating the eigenspace of $-1$ and, for even $N$, a semisimple orthogonal block rotating the eigenspace of $+1$.

 With a class $O$, we associate  an integer valued
vector $\nb=(n_i)_{i=1}^{\ell+2}$ subject to $\sum_{i=1}^{\ell+2}n_i=n$,
and a complex valued vector $\mub=(\mu_i)_{i=1}^{\ell+2}$. We assume that the coordinates of $\mub$ are all invertible,
with $\mu_i\not =\mu_j^{\pm 1}$ for $i<j\leqslant \ell$ and $\mu_i^2\not= 1$ for  $1\leqslant  i\leqslant \ell$. Finally,
we put  $\mu_{\ell+1}=-1$ and
 $\mu_{\ell+2}=1$. We reserve the special notation $m=n_{\ell+1}$ and $p=n_{\ell+2}$.

The initial point $o\subset O$ is fixed to the diagonal matrix with the entries
$$\underbrace{\mu_1,\ldots, \mu_1}_{n_1},\ldots, \underbrace{\mu_\ell,\ldots, \mu_\ell}_{n_\ell}
,\underbrace{-1,\ldots, -1}_{m},
\underbrace{1,\ldots, 1}_{P},\underbrace{-1,\ldots, -1}_m,
\underbrace{\mu_\ell^{-1},\ldots, \mu_\ell^{-1}}_{n_\ell},\ldots, \underbrace{\mu_1^{-1},\ldots, \mu_1^{-1}}_{n_1},
$$
where $P=2p$ if $N=2n$ and $P=2p+1$ if $N=2n+1$. We assume $m\geqslant 2$, also $p\geqslant 2$  for even $N$ and $p\geqslant 0$ for
odd $N$. The class with $p=1$, $m=2$ for even $N$ is a "boundary" case mentioned in the introduction, which is not considered here.

The stabilizer subgroup of the initial point $o\in O$ is the direct product
\be
\label{gr_K}
K=GL(n_1)\times\ldots\times GL(n_\ell)\times SO(2m)\times SO(P)
\ee
and it is determined solely by the vector $\nb$.
The integer $\ell$ counts the number of $GL$-blocks in $K$ of dimension $n_i$, $i=1,\ldots,\ell$,
while $m$ and $p$ are the ranks of the orthogonal blocks in $K$ corresponding to
the eigenvalues $-1$ and $+1$, respectively.
The specialization  $n_1=\ldots =n_\ell=0$ is formally encoded by $\ell=0$ and
 referred to as the {\em symmetric case}. Then (\ref{gr_K}) reduces to $SO(2m)\times SO(P)$, and
the class $O\simeq G/K$ to a symmetric space.

Let $\Mc_K$ denote the moduli space of conjugacy classes with
the fixed isotropy subgroup (\ref{gr_K}), regarded as Poisson spaces.
The set of all  $\ell+2$-tuples $\mub$  as above specified
parameterizes $\Mc_K$ although not uniquely. In particular, for even $N$ one can also choose the alternative
parametrization $\mu_{\ell+1}=1$, $\mu_{\ell+2}=-1$,
however it is compensated by the Poisson automorphism $A\mapsto -A$.
 Therefore, the subset $\hat \Mc_K$ of $\mub$
with fixed $\mu_{\ell+1}=-1$ and $\mu_{\ell+2}=1$ can be used for parametrization
of $\Mc_K$ (which is still not one-to-one).

The conjugacy class $O$ associated with $\mub$ and $\nb$ is specified by the set of equations
\be
(A-\mu_1)\ldots (A-\mu_\ell)
(A+1)(A-1)(A-\mu_\ell^{-1})\ldots (A-\mu_1^{-1})=0,
\label{min_pol_cl}
\\
\Tr(A^k)=\sum_{i=1}^\ell n_i(\mu_i^{k}+\mu_i^{-k})+2m(-1)^k+P, \quad k=1,\ldots, N,
\label{tr_cl}
\ee
where the matrix multiplication in the first line is understood. This system is polynomial
in the matrix entries $A_{ij}$ and defines an ideal of $\C[\End(\C^N)]$ vanishing on $O$.
\begin{thm}
The system of polynomial relations (\ref{min_pol_cl}) and (\ref{tr_cl}) along
with the defining relations of the group (\ref{ideal_group})  generates the defining ideal of the
class $O\subset SO(N)$.
\label{prop_clas_so}
\end{thm}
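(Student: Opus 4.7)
The plan is to establish the claim in three stages: containment of $J$—the ideal generated by (\ref{ideal_group}), (\ref{min_pol_cl}), (\ref{tr_cl})—in the defining ideal of $O$; set-theoretic equality $V(J) = O$; and scheme-theoretic equality, i.e., that $J$ is radical. For containment, it suffices to verify the three families of relations at the base point $o$, since each cuts out a conjugation-stable subset. The orthogonality (\ref{ideal_group}) defines $SO(N) \ni o$; the diagonal form of $o$ makes every diagonal entry a root of the polynomial in (\ref{min_pol_cl}), so that polynomial annihilates $o$; and the trace identities (\ref{tr_cl}) are read off directly from the multiplicities of the diagonal entries of $o$.

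For set-theoretic equality, take $A \in SO(N)$ satisfying (\ref{min_pol_cl}) and (\ref{tr_cl}). The polynomial in (\ref{min_pol_cl}) has $\ell + 2$ pairwise distinct roots by the hypotheses $\mu_i \neq \mu_j^{\pm 1}$ and $\mu_i^2 \neq 1$, so $A$ is diagonalizable with spectrum in $\{\mu_i, \mu_i^{-1}, \pm 1\}$. Orthogonality forces the multiplicity of $\mu_i^{-1}$ in $A$ to equal that of $\mu_i$, since the corresponding eigenspaces are dually paired under $C$. Letting $n_i'$, $m_A$, $p_A$ denote the multiplicities of $\mu_i$, $-1$, $+1$ in $A$, the equations (\ref{tr_cl}) for $k = 1, \ldots, \ell + 2$ form a linear system in the differences $n_i' - n_i$, $m_A - 2m$, $p_A - P$ whose coefficient matrix is non-singular by linear independence of the characters $k \mapsto \mu^k$ for the $2\ell + 2$ distinct scalars $\mu \in \{\mu_1^{\pm 1}, \ldots, \mu_\ell^{\pm 1}, \pm 1\}$. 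Hence the multiplicities of $A$ coincide with those of $o$, so $A$ is $O(N)$-conjugate to $o$. The stabilizer of $o$ in $O(N)$ contains the orthogonal blocks $O(2m) \times O(P)$, whose components of determinant $-1$ generate $O(N)/SO(N)$, so the $O(N)$-orbit equals the $SO(N)$-orbit, giving $A \in O$.

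The main obstacle is the ideal-theoretic step. I would compute the Zariski tangent space of $V(J)$ at $o$. The orthogonality Jacobian contributes $X \in \mathfrak{so}(N)$. In the eigenspace decomposition $\C^N = \bigoplus_\alpha V_\alpha$ of $o$, a tangent vector $X$ decomposes into blocks $X_{\alpha\beta} \colon V_\beta \to V_\alpha$; differentiating $p(A) = 0$ at $A = o$ multiplies the diagonal blocks $X_{\alpha\alpha}$ by $p'(\alpha) \neq 0$ and the off-diagonal blocks by $(p(\alpha) - p(\beta))/(\alpha - \beta) = 0$, so the relation reduces to $X_{\alpha\alpha} = 0$, i.e., $X \in \mathfrak{so}(N) \cap (\mathfrak{g}^o)^{\perp}$ where $\mathfrak{g}^o = \mathrm{Lie}(K)$ is the centralizer. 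The trace equations differentiated at $o$ give $\Tr(o^{k-1} X) = 0$, automatic for such $X$. Hence $\dim T_o V(J) = \dim \mathfrak{so}(N) - \dim K = \dim O$. Since $O \subseteq V(J)$ is smooth of this dimension, $V(J)$ is smooth and reduced at $o$; $SO(N)$-equivariance of the conjugation action propagates this to every point of $O$, yielding the scheme-theoretic equality $V(J) = O$.
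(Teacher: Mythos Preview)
Your approach is essentially the paper's: the proof there refers to \cite{M5} and says it ``boils down to checking the rank of Jacobian of the system,'' which is exactly your tangent-space computation at $o$ in the third stage. Two minor slips to fix: the polynomial in (\ref{min_pol_cl}) has $2\ell+2$ pairwise distinct roots, not $\ell+2$; and the trace equations for $k=1,\ldots,\ell+2$ alone need not determine the multiplicities (for instance $\ell=1$, $\mu_1=\sqrt{-1}$ makes your $3\times 3$ coefficient matrix singular), but since $2\ell+2\leqslant N$ you may instead use $k=1,\ldots,2\ell+2$ and apply the full Vandermonde argument you already invoke.
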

\begin{proof}
The proof is similar to the symplectic case worked out in \cite{M5}, Theorem 2.3. It boils down to checking
the rank of Jacobian of the system (\ref{min_pol_cl}), (\ref{tr_cl}), and (\ref{ideal_group}).
\end{proof}

\section{Quantum orthogonal groups}
\label{QOG}
The quantum group $U_\hbar(\g)$ is  a deformation of the universal enveloping
algebra $U(\g)$ along the parameter $\hbar$ in the class of Hopf algebras, \cite{D}.
By definition, it is a topologically free $\C[\![\hbar]\!]$-algebra. Here and further
on,  $\C[\![\hbar]\!]$ is the local ring of formal power series in $\hbar$.

Let $R$ and  $R^+$ denote respectively the root system and the set of positive roots of the
orthogonal Lie algebra $\g$.
Let  $\Pi^+=(\al_1,\al_1,\ldots, \al_{n})$ be the set of simple positive roots.
They can be  conveniently expressed through an orthonormal basis $(\ve_i)_{i=1}^n$ with
respect to the canonical symmetric inner form $(\>.\>,\>.\>)$ on the linear span of $\Pi^+$:
$$
\begin{array}{llllllrrr}
\al_i&=&\ve_i-\ve_{i+1}, & i=1,\ldots, n-1,&\al_n =\ve_{n-1}+\ve_{n}, & \g=\s\o(2n),\nn\\
\al_i&=&\ve_i-\ve_{i+1}, & i=1,\ldots, n-1,&\al_n =\ve_{n}, & \g=\s\o(2n+1).\nn
\end{array}
$$
Given a reductive subalgebra $\f\subset \g$ such that $\f \supset \h$, we label its root subsystem with subscript $\f$,
as well as the set of positive and simple positive roots: $R_\f$, $R_\f^+$, $\Pi^+_\f$.
We reserve the notation $\g_k$ for the orthogonal subalgebra of
rank $k\leqslant n$ corresponding to the positive roots $\Pi^+_{\g_k}=\{\al_{n-k+1}, \ldots, \al_n\}$.

Denote by $\h$ the  dual vector space to the linear span $\C\Pi^+$.
The inner product establishes a linear isomorphism between the $\C\Pi^+$ and $\h$.
We define $h_\la\in \h$
for every $\la\in \h^*=\C\Pi^+$ to be its image under this isomorphism:  $\mu(h_\la)=(\la,\mu)$ for all $h\in \h$.

The vector space $\h$ generates a commutative subalgebra $U_\hbar(\h)\subset U_\hbar(\g)$ called
the Cartan subalgebra.
The quantum group $U_\hbar(\g)$ is a $\C[\![\hbar]\!]$-algebra generated by simple root vectors
$
e_{\mu}, f_{\mu}
$ (Chevalley generators),
and $h_{\mu}\in \h$ (Cartan generators),
$\mu\in \Pi^+$. Both $U_\hbar(\h)$ and $U_\hbar(\g)$ are completed in $\hbar$-adic topology.
The Cartan  and Chevalley generators obey the commutation rule
$$
[h_{\mu},e_{\nu}]= (\mu,\nu) e_{\nu},
\quad
[h_{\mu},f_{\nu}]= -(\mu,\nu) f_{\nu},
$$
$$
[e_{\mu},f_{\nu}]=\delta_{\mu,\nu} \frac{q^{h_{\mu}}-q^{-h_{\mu}}}{q-q^{-1}},\quad \mu \in \Pi^+, \quad q=e^{\hbar}.
$$
Note with care that the denominator is independent of $\mu$, contrary to the usual definition
with   $q_\mu=e^{\hbar\frac{(\mu,\mu)}{2}}$, $\mu \in \Pi^+$, see e.g. \cite{CP}.
The difference comes from a rescaling of the Chevalley generators, which also
respects the Serre relations below. With our normalization,
the natural representation of $U_\hbar\bigl(\s\o(N)\bigr)$ on $\C^N$ is  determined by the classical matrix assignment on
the  generators, which is independent of $q$.

Let $a_{ij}=\frac{2(\al_i,\al_j)}{(\al_i,\al_i)}$,
$i,j=1,\ldots, n$, be the Cartan matrix and put $q_i:=q_{\al_i}$.
Define $[z]_q=\frac{q^z-q^{-z}}{q-q^{-1}}$ for any complex $z$, and
the $q$-binomial coefficients
$$
\left[
\begin{array}{cc}
n  \\ k
\end{array}
\right]_{q}
=
\frac{[n]_q!}{[k]_q![n-k]_q!},
\quad [0]_q!=1,
\quad
[n]_q!=[1]_q\cdot [2]_q\ldots [n]_q
$$
for $k,n\in\N$, $k \leqslant n$.
The positive Chevalley generators satisfy the quantum Serre relations
$$
\sum_{k=0}^{1-a_{ij}}(-1)^k
\left[
\begin{array}{cc}
1-a_{ij} \\
 k
\end{array}
\right]_{q_i}
e_{\al_i}^{1-a_{ij}-k}
e_{ \al_j}e_{ \al_i}^{k}
=0
.
$$
Similar relations hold for the negative Chevalley generators $f_\mu$.

The comultiplication $\Delta$ and antipode $\gm$ are defined on the generators by
$$
\Delta(h_\mu)=h_\mu\tp 1+1\tp h_\mu, \quad \gm(h_\mu)=-h_\mu,
$$
$$
\Delta(e_\mu)=e_\mu\tp 1+q^{h_\mu}\tp e_\mu, \quad \gm(e_\mu)=-q^{-h_\mu}e_\mu,
$$
$$
\Delta(f_\mu)=f_\mu\tp q^{-h_\mu}+1\tp f_\mu, \quad \gm(f_\mu)=-f_\mu q^{h_\mu},
$$
for all $\mu\in \Pi^+$.
The counit homomorphism $\ve\colon U_\hbar(\g)\to \C[\![\hbar]\!]$ annihilates $e_\mu$, $f_\mu$, $h_\mu$.
As in \cite{M5}, our comultiplication is opposite to the comultiplication used in \cite{CP}.

Besides the Cartan subalgebra $U_\hbar(\h)$, the quantum group $U_\hbar(\g)$ contains the following Hopf subalgebras.
 The positive and negative
Borel subalgebras $U_\hbar(\b^\pm)$ are generated over $U_\hbar(\h)$ by, respectively,
$\{e_\mu\}_{\mu\in \Pi^+}$ and $\{f_\mu\}_{\mu\in \Pi^+}$ as left (right) regular $U_\hbar(\h)$-modules. For any
 root subsystem in $R$ the associated Levi subalgebra $U(\l)$ is quantized to a Hopf subalgebra
$U_\hbar(\l)$, along with the parabolic subalgebras $U_\hbar(\p^\pm)$ generated by $U_\hbar(\b^\pm)$
over $U_\hbar(\l)$.

Let $U_q(\h)$ denote the subalgebra in $U_\hbar(\g)$ generated by the exponentials $t^{\pm}_{\al_i}=q^{\pm h_{\al_i}}$, $\al_i\in \Pi^+$. By $U_q(\g)\subset U_\hbar(\g)$ we mean the Hopf subalgebra generated over $U_q(\h)$  by
$\{e_\mu, f_\mu\}_{\mu\in \Pi^+}$. The other mentioned subalgebras in $U_\hbar(\g)$ have their counterparts in $U_q(\g)$ and will be denoted with
the subscript $q$. Note that all these algebras
are considered over $\C[\![\hbar]\!]$ but are not completed in the $\hbar$-adic topology. Also the $\hbar$- and $q$-versions have  different Cartan subalgebras.

Quantum counterparts  $e_\mu, f_\mu \in U_\hbar(\g)$, $\mu\in R^+$, of higher root vectors are defined
through a reduced decomposition of the maximal element of the Weyl group, \cite{CP}. Contrary to the classical case, they depend on such a decomposition. Higher root vectors generate a Poincare-Birkhoff-Witt (PBW) basis in $U_\hbar(\g)$ over $U_\hbar(\h)$, \cite{CP}. This basis
establishes a linear isomorphism of the adjoint $\h$-modules $U_\hbar (\g)$ and $\U(\g)\tp \C[\![\hbar]\!]$.
This isomorphism enables the use of the same notation for $\h$-submodules
in $U(\g)$ and $U_\hbar(\g)$. For instance, by $\g\subset U_\hbar(\g)$ we understand the sum of $\h$ and the
linear span of $\{f_\mu, e_\mu\}_{\mu\in \Pi^+}$.

The triangular decomposition $\g=\n_\l^-\op \l \op \n_\l^+$ gives rise to the triangular factorization
\be
\label{tr_fac}
U_\hbar(\g)=U_\hbar(\n_\l^-) U_\hbar(\l)U_\hbar(\n_\l^+),
\ee
where $U_\hbar(\n_\l^\pm)$ are subalgebras in $U_\hbar(\b^\pm)$ generated by the
positive or negative root vectors from $\n^\pm_\l$, respectively, \cite{Ke}.
This factorization makes $U_\hbar(\g)$ a free $U_\hbar(\n_\l^-)-U_\hbar(\n_\l^+)$-bimodule generated by $U_\hbar(\l)$.
For the special case $\l=\h$, we denote
$\g_\pm=\n^\pm_\h$. Contrary to the classical universal enveloping algebras,
$U_\hbar(\n^\pm_\l)$ are not Hopf subalgebras in $U_\hbar(\g)$.


\section{Auxiliary parabolic Verma module $\hat M_\la$}
We adopt certain  conventions concerning  representations of quantum groups, which are similar to \cite{M5}.
We assume that they are free modules over $\C[\![\hbar]\!]$ and their rank will
be referred to as  dimension.
Finite dimensional $U_\hbar(\g)$-modules are deformations of their classical counterparts, and we will drop the reference
to the deformation parameter in order to simplify notation. For instance, the natural
$N$-dimensional representation of $U_\hbar(\g)$  will be denoted simply by $\C^N$.

We shall deal with weight i.e. $U_\hbar(\h)$-diagonalizable,  modules. If $V$ is an $\h$-invariant subspace, we mean by $[V]_\al$
the subspace of weight $\al\in \h^*$.
We stick to  the additive parametrization of weights of $U_q(\g)$ facilitated by the embedding $U_q(\h)\subset U_\hbar(\h)$.
Under this convention, such weights belong to  $\frac{1}{\hbar} \h^*[\![\hbar]\!]$. Indeed, for any $\la\in \frac{1}{\hbar} \h^*[\![\hbar]\!]$
its values on the generators $t^{\pm}_{\al_i}$ are $q^{\pm\la(h_{\al_i})}\in \C[\![\hbar]\!]$, so $\la$ is well defined
on $U_q(\h)$. By reasons explained below, it is
sufficient for our needs to confine weights to the subspace $ \hbar^{-1}\h^*\op \h^* \subset \hbar^{-1}\h^*[\![\hbar]\!]$.

Let $L\subset G$ denote the Levi subgroup
$$
L=GL(n_1)\times \ldots \times GL(n_\ell)\times GL(m)\times SO(P).
$$
It is a maximal Levi subgroup of $G$ among those contained in $K$, cf.  (\ref{gr_K}) (the other
one is obtained by reducing $SO(P)$ to $GL(p)$).
By $\l$ we denote
the Lie algebra of $L$. It is a reductive subalgebra in $\g$ of maximal rank $n$.

We denote by $\c_\l\subset \h$ the center of $\l$ and realize its dual $\c_\l^*$ as a subspace in $\h^*$ thanks to
the canonical inner product.
A element $\la\in \mathfrak{C}^*_{\l}= \hbar^{-1} \c_\l^*\op  \c_\l^*$ defines
a one-dimensional representation of $U_q(\l)$ denoted by  $\C_\la$. Its restriction to the Cartan subalgebra acts by the
assignment $q^{h_\al}\mapsto q^{(\al,\la)}$. Since $q=e^\hbar$, the pole in $\la$ is compensated, and the representation is correctly defined.
It extends to $U_q(\p^+)$ by nil on $\n_\l^+\subset \p_\l^+$. Denote by $\hat M_\la=U_q(\g)\tp_{U_q(\p^+)}\C_\la$ the parabolic Verma $U_q(\g)$-module
induced from $\C_\la$, \cite{Ja}. It plays an intermediate role in our construction: we are interested
in a quotient module $M_\la$, which can be defined for certain values of $\la$.

Regarded as a $U_q(\h)$-module, $\hat M_\la$ is isomorphic to $U_q(\n^-_\l)\tp \C_\la$, as follows from (\ref{tr_fac}). This implies
that $\hat M_\la$ are isomorphic as $\C[\![\hbar[\!]$-modules  for all $\la$.
Let $v_\la$ denote the image of $1\tp 1$ in  $\hat M_\la$. It generates $\hat M_\la$ over $U_q(\g)$
and carries the highest weight $\la$.
For any sequence of Chevalley generators $f_{\al_{k_1}},\ldots, f_{\al_{k_m}}$ we call the product
$f_{\al_{k_1}}\ldots f_{\al_{k_m}}v_\la\in \hat M_\la$  Chevalley monomial or simply monomial.

Along with $\hat M_\la$, we consider the {\em right} $U_q(\g)$-module $\hat M_\la^\star=\C_\la\tp_{U_q(\p^-)}U_q(\g)$.
Here $\C_\la$ supports the $1$-dimensional representation of $U_q(\p^-)$ which
extends the $U_q(\l)$-representation  by nil on $\n_\l^-\subset \p_\l^-$.
As a $U_q(\h)$-module, it is isomorphic to $\C_\la\tp U_q(\n^+_\l)$
generated $v_\la^\star =1\tp 1$.
 Given a monomial
$v=f_{\al_{k_1}}\ldots f_{\al_{k_m}}v_\la$ we define  $v^\star$ to be the monomial
$v_\la^\star e_{\al_{k_m}}\ldots e_{\al_{k_1}}\in \hat M_\la^\star$.
There is a bilinear  pairing (Shapovalov form) between $\hat M_\la^\star$ and  $\hat M_\la$. It is determined by the following
requirements: {\em i})
 $\langle xu,y\rangle =\langle x,uy\rangle$ for all $x\in \hat M_\la^\star$, $y\in \hat M_\la$, $u\in U_q(\g)$,
{\em ii}) $v_\la^\star$ is orthogonal to all vectors of
weight lower than $\la$,
{\em iii}) it is normalized to $\langle v_\la^\star,v_\la \rangle =1$.

As in \cite{M5}, we introduce a subspace of weights   that we use for the parametrization of
$\Mc_K$, the moduli space  of conjugacy classes with fixed $K$.
Define $\mathcal{E}_i\in \h^*$, $i=1,\ldots,\ell+2$, by
$$\mathcal{E}_1=\ve_1+\ldots +\ve_{n_1}, \quad \mathcal{E}_2=\ve_{n_1+1}+\ldots +\ve_{n_1+n_2},\quad \ldots,\quad\mathcal{E}_{\ell+2}=
\ve_{n-p+1}+\ldots+\ve_n.$$
The vector space  $\c^*_\l$ is formed by $\la=\sum_{i=1}^ {\ell+2} \La_i \E_i$ with $\La_i\in \C$ and $\La_{\ell+2}=0$.
Put $\mu^0_k=e^{2\La_k}$, for $k=1,\ldots, \ell+2$. Let $\c_{\l,reg}^*$ denote the set of all weights $\la\in \c_\l^*$ such that $\mu^0_k \not= (\mu^0_j)^{\pm1}$ for $k\not =j$.
Denote by $\c_{\k}^*\subset \c_\l^*$ its subset determined by $\mu^0_{\ell+1}=-1$
and by $\c_{\k,reg}^*\subset \c_{\k}^*$  the subspace of such $\la\in \c_\l^*$ that $\mu^0_k\not =(\mu^0_j)^{\pm1}$
for $k,j=1,\ldots, \ell+2$, $k\not =j$. Obviously $\c_{\k,reg}^*$ is dense in $\c_{\k}^*$.
Finally, we introduce $\mathfrak{C}^*_{\k,reg}\subset\mathfrak{C}^*_{\k}\subset\frac{1}{\hbar}\c_\l^*\oplus \c_\l^*$ by
setting
$\mathfrak{C}^*_{\k}=\hbar^{-1} \c_{\k}^*-\frac{P}{2}\E_{\ell+1}$ and $\mathfrak{C}^*_{\k,reg}=\hbar^{-1} \c_{\k,reg}^*-\frac{P}{2}\E_{\ell+1}$.
Clearly $\mathfrak{C}^*_{\k,reg}$ is dense in $\mathfrak{C}^*_{\k}$.
By construction, all weights from $\mathfrak{C}^*_{\k}$ satisfy
$q^{2(\al_{n-p},\la)}=-q^{- P}$.

Note that the
 vector $\mub^0=(\mu^0_i)$ for $\la \in \c_{\k,reg}^*$ belongs
to $\hat \Mc_K$ covering $\Mc_K$, and all points in $\hat \Mc_K$ can be obtained this way.
\subsection{Some auxiliary technicalities}
In this  section we introduce some constructions which we use further on. They involve
the quantum subgroup $U_q\bigl(\g\l(n)\big)$ in $U_\hbar(\g)$ corresponding to the roots $\{\al_i\}_{i=1}^{n-1}$.
Its  negative Chevalley generators obey the Serre relations
\be
\label{Serre}
f_{\al_i}^2f_{\al_{i\pm 1}}-(q+q^{-1})f_{\al_i}f_{\al_{i\pm 1}}f_{\al_i}+f_{\al_{i\pm 1}}f_{\al_i}^2=0, \quad
[f_{\al_i},f_{\al_j}]=0,
\ee
for all feasible  $i$, $j$, and $|i-j|>1$ (the positive generators satisfy similar relations).
These relations will be heavily used in what follows.

Let us fix the Levi subalgebra $\l=\g\l(2)\oplus \s\o(N-4)$, which corresponds to $\ell=0$, $m=2$, $p=n-2>0$,
and let $\hat M_\la$ be a parabolic Verma module relative to $U_q(\l)$. Note that $f_{\al}$ kills the  generator $v_\la\in \hat M_\la$ unless $\al =\al_2$.
Put $\kappa=p=n-2$ if $N=2n$ and $\kappa=p+1=n-1$ if $N=2n+1$.
Introduce the element
$\omega=f_{\al_\kappa} \ldots f_{\al_2} v_\la\in \hat M_\la$ and also
$\omega=v_\la$ for $p=n-2=0$.  This vector participates
in a basis, which we use for calculation of a singular vector in $\hat M_\la$ in the subsequent sections.
It is
constructed solely out of the $\g\l(n)$-generators and
 features the following.
\begin{lemma}
\label{omega}
Suppose that $3\leqslant \kappa$. Then $\omega$ is annihilated by
$f_{\al_i}$, $3\leqslant i \leqslant \kappa$.
\end{lemma}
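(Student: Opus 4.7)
The plan is to push each $f_{\al_i}$ (for $3\leqslant i\leqslant\kappa$) rightward through the string $f_{\al_\kappa}\cdots f_{\al_2}$ using the commutativity $[f_{\al_i},f_{\al_j}]=0$ for $|i-j|>1$ together with the Serre relations (\ref{Serre}), so that after the rewritings every resulting monomial has some $f_{\al_j}$ with $j\neq 2$ standing directly to the left of $v_\la$. Since the Levi $\l=\g\l(2)\oplus\s\o(N-4)$ used to form $\hat M_\la$ has $\Pi_\l^+=\{\al_1,\al_3,\ldots,\al_n\}$, the remark preceding the lemma gives $f_{\al_j}v_\la=0$ for every $j\neq 2$, so all such monomials vanish. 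All indices involved lie in $\{2,\ldots,\kappa\}\subset\{1,\ldots,n-1\}$, so the $A$-type Serre relations (\ref{Serre}) are the only ones needed.

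Two cases are handled separately. For $i=\kappa$: expand $f_{\al_\kappa}\omega=f_{\al_\kappa}^2 f_{\al_{\kappa-1}}\cdots f_{\al_2}v_\la$, apply the Serre identity $f_{\al_\kappa}^2 f_{\al_{\kappa-1}}=(q+q^{-1})f_{\al_\kappa}f_{\al_{\kappa-1}}f_{\al_\kappa}-f_{\al_{\kappa-1}}f_{\al_\kappa}^2$, and commute the trailing powers of $f_{\al_\kappa}$ through $f_{\al_{\kappa-2}},\ldots,f_{\al_2}$ (all at distance $\geqslant 2$ from $\kappa$) until $f_{\al_\kappa}$ meets $v_\la$ and is killed. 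For $3\leqslant i<\kappa$: first commute $f_{\al_i}$ past the distant factors $f_{\al_\kappa},\ldots,f_{\al_{i+2}}$, then use Serre to rewrite $f_{\al_i}f_{\al_{i+1}}f_{\al_i}$ as $(q+q^{-1})^{-1}\bigl(f_{\al_i}^2 f_{\al_{i+1}}+f_{\al_{i+1}}f_{\al_i}^2\bigr)$. In the first summand, $f_{\al_{i+1}}$ commutes through $f_{\al_{i-1}},\ldots,f_{\al_2}$ and annihilates $v_\la$ because $i+1\geqslant 4$. In the second summand, apply Serre once more to $f_{\al_i}^2 f_{\al_{i-1}}$; in each of the two resulting monomials the remaining $f_{\al_i}$ (respectively $f_{\al_i}^2$) commutes through $f_{\al_{i-2}},\ldots,f_{\al_2}$ and kills $v_\la$, since $i\geqslant 3$. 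For $i=3$ the tail $f_{\al_{i-2}}\cdots f_{\al_2}$ degenerates to the empty product and the same argument goes through directly.

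The proof is essentially a mechanical bookkeeping exercise with the Serre relations, and no serious obstacle is anticipated. The one point requiring attention is to verify, after each Serre rewriting, that the generator eventually reaching $v_\la$ carries index in $\{3,\ldots,\kappa\}$ and so lies in $\Pi_\l^+$; this is immediate because every Serre move redistributes indices only among $\{i-1,i,i+1\}\subset\{2,\ldots,\kappa\}$ and the rightmost $f_{\al_2}$ is never detached from its position adjacent to $v_\la$ until some other factor has already acted, ensuring that the surviving $f_{\al_j}v_\la$ always has $j\geqslant 3$.
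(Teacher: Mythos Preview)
Your proposal is correct and follows essentially the same argument as the paper's own proof: both split into the cases $i=\kappa$ and $3\leqslant i<\kappa$, commute $f_{\al_i}$ past the distant factors, apply the Serre relation (\ref{Serre}) to $f_{\al_i}f_{\al_{i+1}}f_{\al_i}$ (resp.\ $f_{\al_\kappa}^2 f_{\al_{\kappa-1}}$), and then push the released generator rightward through commuting factors until it annihilates $v_\la$. Your write-up is in fact slightly more explicit about the edge case $i=3$ and about verifying that the generator reaching $v_\la$ always has index $\geqslant 3$.
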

\begin{proof}
Let $\sim$ denote equality up to a scalar factor.
Assuming $3\leqslant i < \kappa$, we get
$$
f_{\al_i}\omega=f_{\al_\kappa}\ldots f_{\al_i}f_{\al_{i+1}}f_{\al_i} \ldots f_{\al_2} v_\la
\sim f_{\al_\kappa}\ldots (f_{\al_i}^2f_{\al_{i+1}}+f_{\al_{i+1}}f_{\al_i}^2) \ldots f_{\al_2} v_\la,
$$
by the Serre relations (\ref{Serre}). The rightmost dots contain  generators with numbers strictly less than $i$.
Since they commute with $f_{\al_{i+1}}$, it can be pushed to the right in the first summand, where it kills $v_\la$.
The second summand is equal to
$
f_{\al_\kappa}\ldots f_{\al_i}^2f_{\al_{i-1}} \ldots  v_\la
$.
Again, using the Serre relation for $f_{\al_i}^2f_{\al_{i-1}}$ we can place  at least one factor $f_{\al_i}$
on the right of $f_{\al_{i-1}}$ and push it freely  further to the right. This kills the second summand.

If $i=\kappa$, then we have $f_{\al_\kappa}\omega=f_{\al_\kappa}^2 f_{\al_{\kappa-1}}\ldots f_{\al_2} v_\la$.
Using  (\ref{Serre}), at least one copy of  $f_{\al_\kappa}$ can be pushed through $f_{\al_{\kappa-1}}$ to the right
and further on till it kills $v_\la$.
\end{proof}
\begin{lemma}
\label{omega_singular}
The vector $\omega$ is annihilated by
$e_{\al_i}$, $\al_i\in \Pi^+$, $i\not = \kappa$.
\end{lemma}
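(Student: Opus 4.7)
The plan is to slide $e_{\al_j}$ rightward through $\omega=f_{\al_\kappa}\ldots f_{\al_2}v_\la$ using the Chevalley commutation relations, and then invoke the parabolic highest-weight annihilations. For the Levi $\l=\g\l(2)\oplus\s\o(N-4)$ at hand one has $\Pi^+_\l=\{\al_1,\al_3,\al_4,\ldots,\al_n\}$; the only simple root absent from this list is $\al_2$, so $v_\la$ is annihilated by $f_{\al_i}$ for every $i\neq 2$ and by all $e_{\al_i}$. This is the only fact about $\hat M_\la$ I shall use beyond the Chevalley and Serre relations.

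When $j=1$ or $j>\kappa$, the index $j$ does not appear among $\{2,\ldots,\kappa\}$, so $[e_{\al_j},f_{\al_i}]=0$ for every factor of $\omega$ and $e_{\al_j}$ slides unimpeded to $v_\la$, where it vanishes. For $2\leqslant j<\kappa$ I first commute $e_{\al_j}$ past the initial block $f_{\al_\kappa}\ldots f_{\al_{j+1}}$ and then apply the relation $e_{\al_j}f_{\al_j}=f_{\al_j}e_{\al_j}+\frac{q^{h_{\al_j}}-q^{-h_{\al_j}}}{q-q^{-1}}$. The summand containing $f_{\al_j}e_{\al_j}$ sends $e_{\al_j}$ the rest of the way to $v_\la$ and gives zero. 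The Cartan summand, after the routine step of moving $q^{\pm h_{\al_j}}$ through the trailing block $f_{\al_{j-1}}\ldots f_{\al_2}$ (which only contributes an invertible scalar depending on $\la$ and $q$), leaves a scalar multiple of
\[
f_{\al_\kappa}\ldots f_{\al_{j+1}}f_{\al_{j-1}}\ldots f_{\al_2}v_\la.
\]

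To kill this reduced monomial, I note that each index in the left block $f_{\al_\kappa}\ldots f_{\al_{j+1}}$ differs from each index in the right block $f_{\al_{j-1}}\ldots f_{\al_2}$ by at least $2$, so the Serre relations (\ref{Serre}) reduce to plain commutativity. Transporting the left block across the right block yields $f_{\al_{j-1}}\ldots f_{\al_2}\,f_{\al_\kappa}\ldots f_{\al_{j+1}}v_\la$, and the innermost factor $f_{\al_{j+1}}$ annihilates $v_\la$ since $\al_{j+1}\in\Pi^+_\l$. The borderline case $j=2$ is identical with an empty right block: the surviving monomial is $f_{\al_\kappa}\ldots f_{\al_3}v_\la$, which vanishes because $f_{\al_3}v_\la=0$. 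The only delicate point is the bookkeeping for the Cartan commutations, but since the scalars produced are manifestly invertible, they pose no genuine obstacle to the vanishing.
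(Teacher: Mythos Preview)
Your proof is correct and follows essentially the same route as the paper. Both arguments commute $e_{\al_j}$ through to $v_\la$, isolate the Cartan contribution as the surviving term, and then push a generator of the left block (the paper moves only $f_{\al_{j+1}}$, you transport the whole block $f_{\al_\kappa}\ldots f_{\al_{j+1}}$) across the right block to annihilate $v_\la$; either is fine since all the relevant indices differ by at least $2$. One minor remark: the scalar produced by the Cartan piece need not be invertible (for $j=2$ it is $[(\al_2,\la)]_q$), but as you implicitly note this is irrelevant to the vanishing conclusion.
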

\begin{proof}
Obviously, $\omega$ is annihilated by $e_{\al_i}$ if $i=1$ and  $i> \kappa$.
Applying $e_{\al_i}$ with  $2\leqslant i \leqslant \kappa-1$ to $\omega$ yields
$
f_{\al_\kappa} \ldots f_{\al_{i+1}}\ldots v_\la
$
up to a scalar multiplier.
Here the dots on the right stand for the generators with numbers strictly less than $i$. Since they commute
with $f_{\al_{i+1}}$, the latter can be pushed to the right, where it kills $v_\la$.
\end{proof}
Remark that $\omega$ is a non-zero vector of  weight
$\la-\ve_2+\ve_{\kappa+1}$. Indeed, one can check that
$\dim [\hat M_\la]_{\la-\ve_2+\ve_{\kappa+1}}=1$ and all other monomials of this weight turn zero.

Further we present another auxiliary construction, which also involves only the $\g\l(n)$-generators.
Suppose that $3\leqslant n$ and introduce vectors  $y_k\in \hat M_\la$, $k=2,\ldots,n-1,$
 by
$$
y_2=[f_{\al_1},f_{\al_2}]_a f_{\al_{{2}}}v_\la,
\quad y_k=f_{\al_{k}}\ldots f_{\al_{{3}}}[f_{\al_1},f_{\al_2}]_a f_{\al_{k}}\ldots f_{\al_{{2}}}v_\la,
\quad 3\leqslant k\leqslant n-1,
$$
where $[X,Y]_a=XY-aYX$. Here and further on we set the parameter $a$ equal to $q+q^{-1}$.
\begin{lemma}
For all $k=2,\ldots,n-1$, one has $y_k=0$.
\label{y_zeros}
\end{lemma}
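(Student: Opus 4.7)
My plan is to prove $y_k=0$ for $k=2,\dots,n-1$ by treating $k=2$ and $k=3$ directly as base cases and then inducting for $k\geq 4$. The key structural fact I will use throughout is that $v_\la$ is the highest weight vector of a parabolic Verma module for $U_q(\l)$ with $\l=\g\l(2)\oplus\s\o(N-4)$, whose only simple root of $\g$ \emph{not} in $\Pi^+_\l$ is $\al_2$; consequently $f_{\al_i} v_\la=0$ for every $i\neq 2$. For $k=2$ this is immediate: the Serre relation for $\al_1,\al_2$ rewrites $[f_{\al_1},f_{\al_2}]_a f_{\al_2}$ as $-f_{\al_2}^2 f_{\al_1}$, and $f_{\al_1}v_\la=0$ gives $y_2=0$.

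The case $k=3$ is the main technical step. The plan is to first establish the swap identity
\[
[f_{\al_1},f_{\al_2}]_a f_{\al_3} f_{\al_2} v_\la = [f_{\al_3},f_{\al_2}]_a f_{\al_1} f_{\al_2} v_\la,
\]
obtained by combining the two Serre reductions $f_{\al_1} f_{\al_2}^2 v_\la=af_{\al_2}f_{\al_1}f_{\al_2}v_\la$ and $f_{\al_2}f_{\al_3}f_{\al_2}v_\la=\tfrac{1}{a}f_{\al_3}f_{\al_2}^2 v_\la$ with the commutativity $[f_{\al_1},f_{\al_3}]=0$. Then the $\al_2,\al_3$ Serre relation implies $f_{\al_3}[f_{\al_3},f_{\al_2}]_a=-f_{\al_2}f_{\al_3}^2$, so $y_3=-f_{\al_2}f_{\al_3}^2 f_{\al_1}f_{\al_2}v_\la=-f_{\al_2}f_{\al_1}f_{\al_3}^2 f_{\al_2}v_\la$ after commuting $f_{\al_1}$ and $f_{\al_3}$. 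A final Serre reduction shows $f_{\al_3}^2 f_{\al_2}v_\la=0$ (both summands of Serre are killed by $f_{\al_3}v_\la=0$), giving $y_3=0$.

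For $k\geq 4$ I induct. Using that $[f_{\al_1},f_{\al_2}]_a$ commutes with each $f_{\al_j}$, $j\geq 4$, I rewrite $y_k = (f_{\al_k}\cdots f_{\al_3})(f_{\al_k}\cdots f_{\al_4})v_3$ with $v_3=[f_{\al_1},f_{\al_2}]_a f_{\al_3} f_{\al_2} v_\la$. Commuting $f_{\al_k}$ past $f_{\al_{k-2}},\dots,f_{\al_3}$ (which commute with $f_{\al_k}$) exposes the pattern $f_{\al_k} f_{\al_{k-1}} f_{\al_k}$, to which I apply the Serre relation $f_{\al_k} f_{\al_{k-1}} f_{\al_k}=\tfrac{1}{a}(f_{\al_k}^2 f_{\al_{k-1}}+f_{\al_{k-1}}f_{\al_k}^2)$ to split $y_k$ into two summands. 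The first, using that $f_{\al_k}^2$ commutes with $f_{\al_{k-2}}\cdots f_{\al_3}$, equals $\tfrac{1}{a}f_{\al_k}^2 y_{k-1}$ and vanishes by the induction hypothesis. The second carries a factor $f_{\al_k}^2 v_{k-1}$ with $v_{k-1}=[f_{\al_1},f_{\al_2}]_a f_{\al_{k-1}}\cdots f_{\al_2} v_\la$; pushing $f_{\al_k}^2$ past $[f_{\al_1},f_{\al_2}]_a$ and through $f_{\al_{k-1}}f_{\al_{k-2}}\cdots f_{\al_2}$ via one Serre reduction for $\al_{k-1},\al_k$ leaves terms ending in $f_{\al_k}v_\la$ or $f_{\al_k}^2 v_\la$, both zero since $\al_k\in\Pi^+_\l$.

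The main obstacle I anticipate is the swap identity underlying the $k=3$ case: it is the only nontrivial manipulation mixing Serre relations for the two non-commuting pairs $\al_1,\al_2$ and $\al_2,\al_3$, and without it the inductive scheme has no base to bootstrap on. Once it is in hand, the step from $y_{k-1}=0$ to $y_k=0$ is mechanical bookkeeping with Serre relations and the standard commutations $[f_{\al_i},f_{\al_j}]=0$ for $|i-j|\geq 2$.
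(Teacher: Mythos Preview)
Your proof is correct and follows essentially the same inductive Serre-relation scheme as the paper's own argument. The base $k=2$ is identical, and your induction step for $k\ge 4$ is just a relabelled version of the paper's: where the paper moves the second copy of $f_{\al_{k+1}}$ leftwards past $[f_{\al_1},f_{\al_2}]_a$ and $f_{\al_3},\dots,f_{\al_{k-1}}$ to produce the block $f_{\al_{k+1}}f_{\al_k}f_{\al_{k+1}}$, you first commute $[f_{\al_1},f_{\al_2}]_a$ to the right to form $v_3$, then move the second $f_{\al_k}$ leftwards to produce $f_{\al_k}f_{\al_{k-1}}f_{\al_k}$. The ensuing Serre split and the two vanishing mechanisms (induction hypothesis; pushing $f_{\al_k}$ through to kill $v_\la$) are the same.

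The one genuine addition in your write-up is the explicit treatment of $k=3$ via the swap identity $[f_{\al_1},f_{\al_2}]_a f_{\al_3} f_{\al_2} v_\la=[f_{\al_3},f_{\al_2}]_a f_{\al_1} f_{\al_2} v_\la$. This is well spotted: the paper's induction, as literally phrased, asks to commute $f_{\al_{k+1}}$ past $[f_{\al_1},f_{\al_2}]_a$, which for $k=2$ would require $[f_{\al_3},f_{\al_2}]=0$ and so does not apply verbatim. Your separate handling of $k=3$ cleanly closes that small gap; the paper tacitly absorbs it into the general pattern.
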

\begin{proof}
For $k=2$ we find
$[f_{\al_1},f_{\al_2}]_a f_{\al_{{2}}}v_\la=f_{\al_1}f_{\al_2}f_{\al_{{2}}}v_\la-af_{\al_2}f_{\al_1}f_{\al_{{2}}}v_\la=0$
by the Serre relation (\ref{Serre}).
For higher $k$ we use induction. Suppose the lemma is proved for some $k\geqslant 2$.
Then
\be
y_{k+1}&=&f_{\al_{k+1}}f_{\al_{k}}\ldots f_{\al_{{3}}}[f_{\al_1},f_{\al_2}]_a f_{\al_{k+1}}f_{\al_{k}}\ldots f_{\al_{{2}}}v_\la
\nn\\
&=&(f_{\al_{k+1}}f_{\al_{k}}f_{\al_{k+1}})\ldots f_{\al_{{3}}}[f_{\al_1},f_{\al_2}]_a f_{\al_{k}}\ldots f_{\al_{{2}}}v_\la.
\nn
\ee
The term in the brackets produces
$a^{-1}(f_{\al_{k+1}}^2 f_{\al_{k}}+f_{\al_{k}}f_{\al_{k+1}}^2)$
through (\ref{Serre}).
The first term is zero by the induction assumption. The second term is zero too, because
$f_{\al_{k+1}}^2$ can be pushed to the right till it meets the second copy of $f_{\al_{k}}$. By the Serre relation,
one factor $f_{\al_{k+1}}$ can be pushed through $f_{\al_{k}}$ to the right. Then it proceeds  freely till
it kills $v_\la$. This proves the statement.
\end{proof}
Remark that the case $N=5$, $m=2$, $p=0$ is excluded from this construction, and  $y_2\not =0$ then.

\subsection{The module $\hat M_\la$ for  $\l=\g\l(2)\oplus \s\o(P)$}
A substantial part of this theory is captured by  the
special case of symmetric conjugacy classes. That accounts for the fact that the difference between $K$ and $L$
is confined within the orthogonal blocks of $K$. Because of that, we start from the symmetric
case, when the stabilizer $\k$ consists of two simple orthogonal blocks of ranks $m$ and $p$.
Furthermore, the general symmetric case can be readily derived from
the specialization $m=2$ (note that $\s\o(4)$ is the smallest semisimple orthogonal algebra of even dimension).
For this reason, we start with $\k=\s\o(4)\oplus \s\o(N-4)$, $\l=\g\l(2)\oplus \s\o(N-4)$.

Observe that $U(\k)$ is generated over $U(\l)$ by a pair of root vectors $e_\dt, f_\dt$,
where
$$
\delta=\al_1+2\sum_{i=2}^{n-2}\al_i+\al_{n-1}+\al_n, \quad \g=\s\o(2n),\quad
\delta=\al_1+2\sum_{i=2}^{n}\al_i, \quad \g=\s\o(2n+1).
$$
Namely,
$\k=\m^-\oplus \l\oplus \m^+$, where $\m^-=\ad(\l)(f_\dt)$ and $\m^+=\ad(\l)(e_\dt)$
are abelian Lie subalgebras. The algebra $U(\k)$ features the triangular decomposition
$U(\k)=U(\m^-)\times U(\l)\times  U(\m^+)$.

In the symmetric case under consideration, the weight $\la$ satisfies the conditions
$(\al_i,\la)=0$ for all $i$ but $i=2$. Therefore, $\hat M_\la$ is parameterized by scalar $(\al_2,\la)$.
Its highest weight vector $v_\la$ is annihilated  by all $e_{\al_i}$ and all  $f_{\al_i}$ except for  $f_{\al_2}$.
Regarding $\hat M_\la$ as a $U_\hbar(\g_-)$-module consider its classical limit $\hat M_\la/\hbar\hat M_\la$.
It  is generated by the  root vectors
$$
f_{\ve_1\pm \ve_i}, \> f_{\ve_2\pm \ve_i}, \>  f_{\ve_1+ \ve_2}, \>  f_{\ve_1}, \>  f_{\ve_2} \in \n^-_\l,
$$
where $i=3,\ldots, n$, and $f_{\ve_1}$, $f_{\ve_2}$ are present only when $N$ is odd.
Therefore, modulo $\hbar$, the weight space $[\hat M_\la]_{\la-\dt}$, has the basis of $N-3$ elements
$$
f_{\ve_1\pm \ve_i}f_{\ve_2\mp \ve_i}v_\la,\quad f_{\ve_1+\ve_2}v_\la, \quad f_{\ve_1}f_{\ve_2}v_\la,
$$
where last term counts for odd $N$. Since $\hat M_\la$ is $\C[\![\hbar]\!]$-free, $\dim [\hat M_\la]_{\la-\dt}=N-3$.

We intend to  calculate a singular vector $v_{\la-\dt}\in  [\hat M_\la]_{\la-\dt}$ where $\la$ allows for it. Singular means that $v_{\la-\dt}$ lies
in the kernel of all $e_{\al}\in \g_+$. In order to facilitate the calculations, we need to choose a suitable basis.
Notice that in the classical limit the subspace of weight $\la-\dt+\al_1=\la-2\ve_2$ (the image $e_{\al_1}[\hat M_\la]_{\la-\dt}$ for
generic $\la$) has a basis
$$
f_{\ve_2\pm \ve_i}f_{\ve_2\mp \ve_i}v_\la, \quad f_{\ve_2}f_{\ve_2}v_\la,
$$
where the last term counts for odd $N$. Therefore,  $\ker e_{\al_1}|_{[M_\la]_{\la-\dt}}=[\ker e_{\al_1}]_{\la-\dt}$ has dimension $n-1$.

We consider the PBW basis being not particularly convenient for our purposes and introduce another basis in
$[\ker e_{\al_1}]_{\la-\dt}$.
First we do it for the lowest dimensions $N=5,7,8$. For $N=5$ we have only one vector
 $$
 x_2=[f_{\al_1},f_{\al_2}]_a f_{\al_2} v_\la,
 $$
for $N=7$ there are two vectors
 $$
 x_2=[f_{\al_1},f_{\al_2}]_a f_{\al_3}(f_{\al_3}\omega),\quad
  x_3=f_{\al_3}[f_{\al_1},f_{\al_2}]_a  (f_{\al_3}\omega),\quad \omega = f_{\al_2}v_\la.
$$
There are three vectors for $N=8$:
$$
x_{2}=[f_{\al_1},f_{\al_2}]_a  (f_{\al_{3}}f_{\al_{4}} \omega),
\quad
x_{3}=f_{\al_{3}}[f_{\al_1},f_{\al_2}]_a f_{\al_4} \omega,
\quad
x_{4}=f_{\al_{4}} [f_{\al_1},f_{\al_2}]_a f_{\al_{3}} \omega,
\quad
\omega=f_{\al_2} v_\la.
$$
Assuming $N>8$, define $n-1$ vectors $x_i\in [\hat M_\la]_{\la-\dt}$ by
\be
x_{2}&=&[f_{\al_1},f_{\al_2}]_a f_{\al_{3}}
\stackrel{<}{\ldots} f_{\al_{n}}(f_{\al_{n}}\omega),
\nn\\
x_{i}&=&f_{\al_{i}}\stackrel{>}{\ldots} f_{\al_{3}}[f_{\al_1},f_{\al_2}]_a f_{\al_{i+1}}
\stackrel{<}{\ldots} f_{\al_{n}}(f_{\al_{n}}\omega), \quad i=3,\ldots,n,
\nn
\ee
for $N=2n+1$, and by
\be
x_{2}&=&[f_{\al_1},f_{\al_2}]_a f_{\al_3}
\stackrel{<}{\ldots} f_{\al_{n-2}} (f_{\al_{n-1}}f_{\al_{n}} \omega),
\nn\\
x_{i}&=&f_{\al_{i}}\stackrel{>}{\ldots} f_{\al_{3}}[f_{\al_1},f_{\al_2}]_a f_{\al_{i+1}}
\stackrel{<}{\ldots} f_{\al_{n-2}} (f_{\al_{n-1}}f_{\al_{n}} \omega),\quad i=3,\ldots,n-2,
\nn\\
x_{n-1}&=&f_{\al_{n-1}} f_{\al_{n-2}} \stackrel{>}{\ldots}f_{\al_3} [f_{\al_1},f_{\al_2}]_a f_{\al_n} \omega,
\nn\\
x_{n}&=&f_{\al_{n}} f_{\al_{n-2}} \stackrel{>}{\ldots}f_{\al_3} [f_{\al_1},f_{\al_2}]_a f_{\al_{n-1}} \omega,
\nn
\ee
for $N=2n$. The products are ordered with respect to the root numbers as indicated. The element $\omega$ for all $N$ is defined in the previous section. We emphasize that the generators in the parenthesis stay within as $i$ varies,
while other generators are permuted as specified.

The following lemma accounts for the choice of the commutator parameter $a$.
\begin{lemma}
The  vectors $x_i$, $i=2,\ldots,n$, belong to  $\ker e_{\al_1}\subset \hat M_\la$.
\label{al1}
\end{lemma}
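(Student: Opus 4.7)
The plan is to exploit the fact that $e_{\al_1}$ commutes with every $f_{\al_j}$ for $j\geqslant 3$ (because $(\al_1,\al_j)=0$ for such $j$), so the only place in $x_i$ where $e_{\al_1}$ can act nontrivially is the commutator $[f_{\al_1},f_{\al_2}]_a$. First I would rewrite each $x_i$ in the form $x_i=Y_i\,[f_{\al_1},f_{\al_2}]_a\,Z_i\,\omega$, where $Y_i$ and $Z_i$ are products of Chevalley generators $f_{\al_j}$ with $j\geqslant 3$ (inspection of the definition shows this is uniform in all cases, including the branches $i=n-1, n$ for even $N$). Next I would check that $e_{\al_1}\omega=0$: since $\omega=f_{\al_\kappa}\ldots f_{\al_2}v_\la$ (or $v_\la$ in the degenerate case), $e_{\al_1}$ commutes past all factors $f_{\al_j}$ with $j\geqslant 3$, and then past $f_{\al_2}$ (since $[e_{\al_1},f_{\al_2}]=0$) to land on $v_\la$, which it annihilates because $(\al_1,\la)=0$ for $\la\in\c_\l^*$.

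Once this is set up, the problem collapses to evaluating $e_{\al_1}\,[f_{\al_1},f_{\al_2}]_a$ on $Z_i\omega$. A direct calculation using $[e_{\al_1},f_{\al_2}]=0$ and $[e_{\al_1},f_{\al_1}]=\frac{q^{h_{\al_1}}-q^{-h_{\al_1}}}{q-q^{-1}}$ gives
\begin{equation*}
[e_{\al_1},[f_{\al_1},f_{\al_2}]_a]=[h_{\al_1}]_q\,f_{\al_2}-(q+q^{-1})\,f_{\al_2}\,[h_{\al_1}]_q,
\end{equation*}
and moving the Cartan factors past $f_{\al_2}$ (using $q^{\pm h_{\al_1}}f_{\al_2}=q^{\mp(\al_1,\al_2)}f_{\al_2}q^{\pm h_{\al_1}}$ with $(\al_1,\al_2)=-1$) reduces the action on a weight vector of weight $\mu$ to the scalar $[(\al_1,\mu)-1]_q\,f_{\al_2}$. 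The precise choice $a=q+q^{-1}$ is what arranges the coefficient in this clean form.

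The remaining task is to verify that the weight $\mu$ of $Z_i\omega$ satisfies $(\al_1,\mu)=1$. Since $(\al_1,\la)=0$ and $Z_i$ only involves roots $\al_j$ with $j\geqslant 3$ that are orthogonal to $\al_1$, the only contribution comes from the single factor $f_{\al_2}$ inside $\omega$, giving $(\al_1,\mu)=-(\al_1,\al_2)=1$. Consequently the commutator coefficient becomes $[0]_q=0$, and combining everything yields
\begin{equation*}
e_{\al_1}x_i=Y_i\,e_{\al_1}\,[f_{\al_1},f_{\al_2}]_a\,Z_i\omega=Y_i\,[f_{\al_1},f_{\al_2}]_a\,Z_i\,e_{\al_1}\omega=0,
\end{equation*}
which proves the lemma.

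The main obstacle, in my view, is purely bookkeeping: writing out $x_i$ as $Y_i[f_{\al_1},f_{\al_2}]_aZ_i\omega$ uniformly in the six variants ($N=5,7,8$ and the three families for larger odd/even $N$) and confirming in each that neither $Y_i$, $Z_i$, nor the tail of $\omega$ introduces a stray $f_{\al_2}$ or $f_{\al_1}$. Once this is verified, the computational core is just the one-line identity $[1+k]_q-(q+q^{-1})[k]_q=[k-1]_q$ specialized at $k=1$, which is exactly the identity that motivated the particular value of the quantum commutator parameter $a$.
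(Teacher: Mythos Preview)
Your argument is correct and essentially identical to the paper's: both commute $e_{\al_1}$ past the factors $f_{\al_j}$ with $j\geqslant 3$, reduce to $[e_{\al_1},[f_{\al_1},f_{\al_2}]_a]$ acting on a vector whose $\al_1$-weight equals $1$, and observe that the resulting scalar $(q^2-q^{-2})-a(q-q^{-1})$ vanishes precisely for $a=q+q^{-1}$. One cosmetic slip: the identity is $[k+1]_q-(q+q^{-1})[k]_q=-[k-1]_q$ (with a minus sign), but at $k=1$ this does not affect the conclusion.
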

\begin{proof}
Applying $e_{\al_1}$ to $x_i$ we get
$$
e_{\al_1}x_i \sim \ldots [q^{h_{\al_1}}-q^{-h_{\al_1}},f_{\al_2}]_a\ldots \omega=
\bigl((q^2-q^{-2})-a(q-q^{-1})\bigr)\ldots f_{\al_2}\ldots \omega=0.
$$
Indeed, observe that $h_{\al_1}$ commutes with everything between the commutator and $\omega$.
Further, the weight of $\omega$ is $\la-\ve_{2}+\ve_{n-1}$ for $N=2n\geqslant 8$, $\la-\ve_{2}+\ve_{n}$ for $N=2n+1\geqslant 7$, and $\la-\ve_{2}$ for $N=5$.
This produces the vanishing scalar factor in the brackets.
\end{proof}
As we already mentioned, the total dimension of $[\hat M_\la]_{\la-\dt}$ is equal to
$N-3$.  Every vector $x_i$ contains the commutator $[f_{\al_1},f_{\al_2}]_a$ thus involving
two Chevalley monomials. Overall $\{x_i\}_{i=2}^n$ involve $2n-2$  monomials of weight $\la-\delta$.
This is equal to $\dim [\hat M_\la]_{\la-\dt}$ for odd $N$, but greater by $1$ for even $N$.
However,
$$
f_{\al_1}f_{\al_{n-1}}f_{\al_p}\ldots f_{\al_2}f_{\al_{n}}\omega
\sim f_{\al_1}f_{\al_{n}}f_{\al_{n-1}}f_{\al_p}^2\ldots f_{\al_2}^2\omega
\sim
f_{\al_1}f_{\al_{n}}f_{\al_p}\ldots f_{\al_2}f_{\al_{n-1}}\omega.
$$
Therefore, there are effectively  $2n-3$ Chevalley monomials participating in $\{x_i\}_{i=2}^n$ for $N=2n$, as required.

Our search for $v_{\la-\dt}$ will be restricted to the subspace $[\ker e_{\al_1}]_{\la-\dt}$, so  $n-1$ vectors  $x_i$ annihilated by $e_{\al_1}$ are just
enough to form a basis. Next we prove a lemma, which  is crucial for checking the linear independence of  $x_i$.
Introduce  vectors $x'_i\in [\hat M_{\la}]_{\la-\dt+\al_i}$ for  $i=2,\ldots,n$ as follows:
$x'_{2}$ is obtained from $x_{2}$ by replacing the commutator $[f_{\al_1},f_{\al_2}]_a$ with $f_{\al_1}$;
to get $x'_{i}$ for $i>2$, we remove the leftmost copy of $f_{\al_i}$ from $x_{\al_i}$.
One can see that $e_{\al_i}x_i \sim x'_i$  for $i=2,\ldots,n$.
\begin{lemma}
For all $i=2,\ldots,n$, $x_{i}'\not =0$.
\label{e_x_diag}
\end{lemma}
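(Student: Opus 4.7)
The plan is to prove $x_i' \neq 0$ by pairing each $x_i'$ against a suitably chosen $\tilde x_i \in \hat M_\la^\star$ of the same weight $\la - \dt + \al_i$ via the Shapovalov form, and showing that the resulting scalar is a nonzero product of quantum integers. Since $\hat M_\la$ is $\C[\![\hbar]\!]$-free, any such nonvanishing pairing certifies $x_i' \neq 0$.

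By construction, $x_i'$ is (a sum of two Chevalley monomials for $i \geq 3$, arising from the expansion of $[f_{\al_1}, f_{\al_2}]_a$) a specific product of negative Chevalley generators applied to $v_\la$, with the non-parabolic generator $f_{\al_2}$ sitting at the innermost positions coming from $\omega$. The natural candidate for $\tilde x_i$ is the mirror: the same indices in reverse order with $e$'s replacing $f$'s. For $i = 2$ this mirror pairing telescopes cleanly from the innermost contraction $e_{\al_2} f_{\al_2} v_\la = [(\al_2, \la)]_q v_\la$ outward, by repeated use of $[e_\al, f_\bt] = \dt_{\al\bt}[h_\al]_q$ and $e_\al v_\la = 0$, yielding a product $\prod_k [c_k]_q$ of quantum integers. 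For $i \geq 3$, however, the naive mirror can vanish: the two terms of $[f_{\al_1}, f_{\al_2}]_a$ cancel by exactly the identity exploited in the proof of Lemma~\ref{al1}. The remedy is to move $e_{\al_1}$ to an interior position of $\tilde x_i$, so that it is applied only after an $e_{\al_2}$ has broken the symmetry between the two monomials of the commutator; a direct computation in the smallest case $N = 7$, $i = 3$ (where $x_3' = [f_{\al_1},f_{\al_2}]_a f_{\al_3} f_{\al_2} v_\la$) pairs with $v_\la^\star e_{\al_2} e_{\al_3} e_{\al_1} e_{\al_2}$ to give $\pm[(\al_2, \la)]_q\,[(\al_2, \la) + 1]_q$, confirming the approach.

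To see that the quantum integers $[c_k]_q$ in the final pairing are all nonzero, recall that $(\al_j, \la) = 0$ for $j \neq 2$ in the present symmetric case with $\l = \g\l(2) \oplus \s\o(P)$, while $\la \in \mathfrak{C}^*_{\k, reg}$ satisfies $q^{2(\al_2, \la)} = -q^{-P}$. Thus each argument $c_k$ is either a small nonzero integer (giving $[c_k]_q \neq 0$ for generic $\hbar$), or of the form $(\al_2, \la) + c$ with $c \in \Z$, in which case $q^{2((\al_2,\la)+c)} = -q^{-P+2c}$ is never equal to $1$ and hence $[(\al_2,\la) + c]_q \neq 0$.

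The main obstacle is the combinatorial case analysis forced by the piecewise definition of $x_i$ (the separate treatments of $N = 5, 7, 8$, plus the branches $i = n-1, n$ for even $N \geq 10$). In each branch the placement of $e_{\al_1}$ in $\tilde x_i$ and the ordering of the remaining contractions must be tuned to avoid the symmetric cancellation from $[f_{\al_1}, f_{\al_2}]_a$; the underlying pairing mechanism, however, is uniform.
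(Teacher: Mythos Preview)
Your direct Shapovalov-pairing strategy is viable, and the $N=7$, $i=3$ computation is correct (yielding $-[(\al_2,\la)]_q[(\al_2,\la)+1]_q$). But you have not actually carried out the general case, and the paper takes a shorter route that sidesteps the case analysis entirely.

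For $i\geqslant 3$ the paper introduces an auxiliary vector $x_i''$, obtained from $x_i'$ by replacing the commutator $[f_{\al_1},f_{\al_2}]_a$ with just $f_{\al_2}$. This $x_i''$ is a \emph{single} Chevalley monomial lying in the one-dimensional weight space $[\hat M_\la]_{\la-\dt+\al_i+\al_1}$, so its self-pairing $\langle (x_i'')^\star, x_i''\rangle$ telescopes cleanly (using Lemma~\ref{omega_singular}) to $[(\al_2,\la)-1]_q\,\langle\omega^\star,\omega\rangle$, uniformly in $i$ and in $N$---no commutator to navigate and no need to reposition $e_{\al_1}$. Nonvanishing for generic $\la$ then gives nonvanishing for all $\la$, because the $U_q(\g_-)$-module $\hat M_\la\simeq U_q(\g_-)/\sum_{\al\in\Pi^+_\l}U_q(\g_-)f_\al$ is $\la$-independent. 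Finally, the paper observes that the two Chevalley monomials comprising $x_i'$ are precisely the two inequivalent insertions of $f_{\al_1}$ (before or after the leftmost $f_{\al_2}$) into the nonzero monomial $x_i''$; since $\dim[\hat M_\la]_{\la-\dt+\al_i}=2$, these two are independent, and hence their combination $x_i'$ (with coefficients $1$ and $-a$) is nonzero.

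Two further remarks on your version. First, invoking $\la\in\mathfrak{C}^*_{\k,reg}$ to force $[(\al_2,\la)+c]_q\neq 0$ is misplaced: Lemma~\ref{e_x_diag} is stated and used for general $\la$, the singular-vector condition being imposed only later in Proposition~\ref{singular}; the right device is the ``generic $\la$, then all $\la$'' extension, which the paper makes explicit. Second, your claim that every factor $[c_k]_q$ in the general pairing has argument either a small integer or $(\al_2,\la)+c$ is plausible but unproven; establishing it would require exactly the branch-by-branch analysis you flag as the main obstacle. The paper's reduction to $x_i''$ avoids both issues in one stroke.
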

\begin{proof}
Observe that $\dim[\hat M_\la]_{\la-\dt+\al_2}=1$ and
$\dim[\hat M_\la]_{\la-\dt+\al_i}=2$, where $i=3,\ldots ,n$ (for this verification, one can use the classical PBW basis).
Also, notice that $\dim[\hat M_\la]_{\la-\dt+\al_i+\al_1}=1$ for such $i$.
Consider the Chevalley monomials $x''_i$ of weights $\la-\dt+\al_i+\al_1$, $i=3,\ldots,n$, obtained from $x'_{i}$
by replacing the commutator $[f_{\al_1},f_{\al_2}]_a$ with $f_{\al_2}$.
 Using Lemma \ref{omega_singular}, one can easily calculate the matrix elements  of the Shapovalov pairing
$$
\langle {x'}^\star_{2}, x'_{2}\rangle =\langle \omega^\star, \omega\rangle,
\quad
\langle {x''}^\star_{i}, x''_{i}\rangle =\frac{q^{(\al_2,\la)-1}-q^{-(\al_2,\la)+1}}{q-q^{-1}}\langle \omega^\star, \omega\rangle ,
\quad i>2,
$$
and $\langle \omega^\star, \omega\rangle =\frac{q^{(\al_2,\la)}-q^{-(\al_2,\la)}}{q-q^{-1}}$.
This calculation proves that $x_{2}'$ and  $x''_i$  do not vanish for generic $\la$ and hence for all $\la$
(the $U_q(\g_-)$-module $\hat M_\la $ is isomorphic to $U_q(\g_-)/\sum_{\al\in \Pi^+_\l} U_q(\g_-)f_\al$ and hence "independent of $\la$").

Further, there are exactly two ways to construct a monomial of weight ${\la-\dt+\al_i}$, $i=3,\ldots,n$,
 out of $x''_i$: either placing $f_{\al_1}$ before or after the leftmost $f_{\al_2}$ (note that $f_{\al_2}$ is the only generator which does not commute with $f_{\al_1}$).
This gives two independent monomials participating in  $x'_{i}$, $i=3,\ldots ,n$. Consequently,
$x'_{i}$ do not vanish.
\end{proof}
Note that the vectors $x_i$ can be labeled with the simple roots of the subalgebra $\g_{n-1}=\g_{p+1}\subset \g$ via
the assignment $\al_i\mapsto x_i$, $i=2,\ldots,n$.
The next proposition provides qualitative information about the action of positive Chevalley generators on the
system $\{x_i\}\subset  \ker e_{\al_1}$.
\begin{propn}
For all $\al,\al_i\in \Pi^+_{\g_{p+1}}$ such that $(\al,\al_i)=0$ the generator $e_\al$ annihilates $x_{i}$.
If $(\al_j,\al_i)\not =0$, then $e_{\al_j} x_{i}\sim x'_{j}$.
\label{e_x}
\end{propn}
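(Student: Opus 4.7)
The plan is to compute $e_{\al_j} x_i$ by pushing $e_{\al_j}$ to the right through the explicit $f$-product defining $x_i$, using the basic relation
$[e_{\al_j}, f_{\al_k}] = \delta_{jk}(q^{h_{\al_j}}-q^{-h_{\al_j}})/(q-q^{-1})$. Since $\al_j\in \Pi^+_{\g_{p+1}}$ forces $j\geqslant 2$, the generator $e_{\al_j}$ commutes with $f_{\al_1}$, and hence passes through the factor $[f_{\al_1},f_{\al_2}]_a$ whenever $j\neq 2$. It kills $v_\la$ for $j\neq 2$ and, by Lemma \ref{omega_singular}, kills $\omega$ for $j\neq \kappa$. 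Thus only commutators with interior $f_{\al_j}$-factors in $x_i$ can contribute, and the proof reduces to accounting for those contributions.

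For the first assertion, $(\al,\al_i)=0$ with $\al=\al_j$ forces $|j-i|\geqslant 2$ (and $j\geqslant 2$). I would enumerate the positions where $f_{\al_j}$ occurs inside $x_i$. Each such commutator produces a Cartan prefactor followed by a monomial in which that $f_{\al_j}$ has been removed while the adjacent factors $f_{\al_{j\pm 1}}$ remain next to one another. A short argument in the spirit of the proofs of Lemmas \ref{omega} and \ref{y_zeros}, relying on the Serre relation (\ref{Serre}) to swap a factor $f_{\al_j}$ past $f_{\al_{j\pm 1}}$, then lets a freed $f_{\al_j}$ slide freely to the right and annihilate either $\omega$ (via Lemma \ref{omega}) or $v_\la$. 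Summing, every contribution vanishes and $e_{\al_j}x_i=0$.

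For the second assertion, $j\in\{i-1,i,i+1\}$, the defining expression of $x_i$ contains a distinguished $f_{\al_j}$-factor whose removal reproduces exactly $x'_j$; in the boundary case $j=i=2$ the commutator $[f_{\al_1},f_{\al_2}]_a$ is cut down to $f_{\al_1}$ by the action of $e_{\al_2}$ on its interior $f_{\al_2}$, which matches the definition of $x'_2$. I would show that every other position where $f_{\al_j}$ occurs contributes zero by the same Serre-relation mechanism as in the first case, so that $e_{\al_j}x_i$ collapses to a scalar multiple of $x'_j$; the surviving Cartan factor is nonzero on the intermediate weight for $\la\in\mathfrak{C}^*_{\k,reg}$ by a computation of the same type as the one carried out in Lemma \ref{e_x_diag}.

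The main obstacle will be the bookkeeping in the two-factor tails of $x_i$ — the trailing $f_{\al_n}(f_{\al_n}\omega)$ when $j=n$, and the modifications of $x_{n-1}, x_n$ for $N=2n$ — where several copies of $f_{\al_j}$ coexist and the Cartan scalars produced at different positions must be combined and shown to collapse, by (\ref{Serre}) and the explicit weight of $\omega$, to a single nonzero multiple of $x'_j$ in the non-orthogonal case and to $0$ in the orthogonal case. Once this combinatorial check is completed, the proposition follows.
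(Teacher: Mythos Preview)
Your plan is essentially the paper's own approach: push $e_{\al_j}$ through the $f$-product using $[e_{\al_j},f_{\al_k}]=\delta_{jk}[h_{\al_j}]$, then show each residual monomial vanishes by freeing a neighboring $f$-generator (via the Serre relations or plain commutativity) and sliding it to the right until it kills $\omega$ (Lemma~\ref{omega}) or $v_\la$. The paper carries this out case by case, separating even and odd $N$ and the ``tail'' indices, and leaves the second assertion as a routine inspection; your outline matches this.

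Two small points worth noting. First, the paper invokes Lemma~\ref{y_zeros} \emph{directly}, not merely as a stylistic model: for even $N$ the orthogonal pair $(\al_n,\al_{n-1})$ gives $e_{\al_n}x_{n-1}\sim y_{n-1}=0$, and this identification is the cleanest way to dispatch that case. Second, in several cases it is the freed neighbor $f_{\al_{j\pm1}}$ (not a second copy of $f_{\al_j}$) that slides to the right and annihilates $\omega$; your description slightly conflates the two, but the mechanism is the same. Finally, the assertion $e_{\al_j}x_i\sim x'_j$ only requires proportionality, so you need not argue that the Cartan scalar is nonzero.
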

\begin{proof}
Suppose first that $N$ {\em  is even} and put $\al_{n-1}=\mu$, $\al_{n}=\nu$. Denote also $x_{\mu}=x_{n-1}$ and $x_{\nu}=x_{n}$.
Up to a scalar multiplier, $e_\nu x_\mu$ is equal to
$y_{n-1}$, which is zero due to Lemma \ref{y_zeros}. Further, observe that  $e_\mu x_{i}$, for  $i<p$,  contains the factor  $f_{\al_p}f_\nu f_{\al_p}$ producing
$f_{\al_p}^2f_\nu$ and $f_\nu f_{\al_p}^2$ via the Serre relation (\ref{Serre}). In the first term, the generator $f_\nu$ goes freely to the right and kills $v_\la$.
The second term gives rise to the factor $f_{\al_p}\omega$, which is nil by Lemma \ref{omega}.
Due to the symmetry  between the roots $\mu$ and $\nu$, this also proves
$e_\mu x_\nu=0$ and $e_\mu x_{i}=0$ for $i<p$.

By Lemma \ref{omega_singular}, $e_{\al_i}$ kills $\omega$, once $2\leqslant i< p$. Therefore, such $e_{\al_i}$ knock
 out the factor $f_{\al_i}$ from
$x_\mu=f_\mu f_{\al_p}\ldots f_{\al_{i+1}}f_{\al_{i}}\ldots [f_{\al_1},f_{\al_2}]\ldots \omega$  releasing
 $f_{\al_{i+1}}$ next to the left. The latter can be pushed to the right till it meets
$\omega$ and annihilates it by Lemma \ref{omega}. Hence $e_{\al_i}x_\mu=e_{\al_i}x_\nu=0$ for $2\leqslant i< p$.
Similar effect is produced by the action of $e_{\al_i}$ on $x_{j}$ for $3\leqslant i+1<j\leqslant p$.
If $3\leqslant j+1<i\leqslant p$, the
vector $e_{\al_i}x_{j}$ contains the factor
$
f_{\al_{i-1}}f_{\al_{i+1}}\ldots  \omega=
\ldots  f_{\al_{i-1}}\omega,
$
which is zero due to Lemma \ref{omega}. This completes the proof of the first assertion for even $N$.

Now suppose that $N$ is {\em odd}. There is nothing to prove if $p=0$, as there is only one vector, $x_2$.
So we assume $p>0$.
Let us check that $e_{\al_i}x_{j}=0$ when $3\leqslant j+1<i\leqslant n$. Then $x_{j}$ has the structure
$
 \ldots [f_{\al_1},f_{\al_2}]f_{\al_{j+1}}\ldots f_{\al_{i-1}}f_{\al_{i}}\ldots (f_{\al_n}\omega).
$ Observe that $e_{\al_i}$
effectively acts only on the displayed copy of $f_{\al_{i}}$. The other copy is hidden in $\omega$ and can be neglected, because
 $e_{\al_i}$ kills $\omega$ if $i\not= n-1,n$, by Lemma \ref{omega_singular}. If $i=n-1$, then $f_{\al_n}e_{\al_i}\omega=0$ by similar arguments.  If $i=n$, then $e_{\al_i}x_{j}$ still comprises the factor $f_{\al_{n-1}} f_{\al_n}\omega$.
In all cases, $e_{\al_{i}}$ knocks out the leftmost $f_{\al_{i}}$ and releases the factor
$f_{\al_{i-1}}$ on the left, which goes freely to the right. It kills $\omega$ by Lemma \ref{omega} if $i<n$.
Still it  kills $f_{\al_n}\omega$ if $i=n$, and  the proof is similar to Lemma \ref{omega}.

If $3\leqslant i+1<j\leqslant n$, then $x_{j}=f_{\al_j}\ldots f_{\al_{i+1}}f_{\al_{i}}\ldots [f_{\al_1},f_{\al_2}]_a\ldots f_{\al_n}\omega$,
and $f_{\al_{i+1}}$ commutes with everything between $f_{\al_{i}}$ and $f_{\al_n}$. Further reasoning is similar to the case $j+1<i$,
with $f_{\al_{i- 1}}$ replaced by $f_{\al_{i+1}}$. Therefore,  $e_{\al_i}x_{j}=0$ for $i+1<j$. This completes the first part of the proposition for odd $N$.

The proof of the second statement becomes quite straightforward on examining the structure of $x_{i}$.
This is left for the reader as an exercise.
\end{proof}

\begin{corollary}
The vectors $\{x_{i}\}_{i=2}^n$  form a basis in $[\ker e_{\al_1}]_{\la-\dt}$.
\label{basis}
\end{corollary}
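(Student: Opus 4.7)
The plan is to exploit the dimension count already established. By Lemma \ref{al1}, each of the $n-1$ vectors $\{x_{i}\}_{i=2}^{n}$ lies in $[\ker e_{\al_1}]_{\la-\dt}$, which the discussion preceding the corollary identified as an $(n-1)$-dimensional subspace. The corollary thus reduces to establishing linear independence of $\{x_{i}\}_{i=2}^{n}$ over $\C[\![\hbar]\!]$.

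To check this, I would expand each $x_i$ in a basis of Chevalley monomials of $[\hat M_\la]_{\la-\dt}$. By construction, each $x_i$ is a combination $u_i - a\, v_i$ of exactly two Chevalley monomials, arising from the two terms in $[f_{\al_1}, f_{\al_2}]_a$. The counting performed just before the corollary identifies $2(n-1) = N - 3$ such monomials for odd $N$, and, after the one Serre-type identification displayed in the text, $2n - 3 = N - 3$ monomials for even $N$; in either case they form a basis of $[\hat M_\la]_{\la-\dt}$.

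For odd $N$, the two monomials composing $x_i$ are disjoint from those appearing in any $x_j$ with $j \neq i$, so the coefficient matrix of $\{x_i\}$ in the Chevalley-monomial basis is block diagonal with $n-1$ rows of shape $(1,-a)$, visibly of rank $n-1$. For even $N$, the situation differs only in that $x_{n-1}$ and $x_n$ share exactly one common monomial in their expansions while all other pairs remain disjoint; independence of this pair then follows from the nondegeneracy of a $2\times 2$ block, guaranteed because $a = q + q^{-1}$ is a unit in $\C[\![\hbar]\!]$. The main obstacle, already handled in the text via the Serre-type identity preceding the corollary, is the bookkeeping of this one coincidence for even $N$. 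An alternative route would apply the $e_{\al_j}$'s to a putative linear relation and invoke Proposition \ref{e_x} together with Lemma \ref{e_x_diag} to obtain a sparse linear system whose matrix can be peeled off starting from a leaf of the Dynkin diagram of $\g_{p+1}$; I would expect the direct monomial argument to be cleaner, given that the requisite monomial count is already in place.
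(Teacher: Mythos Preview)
Your primary argument has a genuine gap. You assert that the $2(n-1)$ Chevalley monomials participating in the $x_i$ (respectively the $2n-3$ monomials after the one identification for even $N$) ``form a basis of $[\hat M_\la]_{\la-\dt}$''. The text preceding the corollary does not establish this: it only \emph{counts} the monomials and observes that the count matches $\dim [\hat M_\la]_{\la-\dt}=N-3$. Chevalley monomials of a fixed weight are in general linearly dependent (this is exactly what the Serre relations encode), so matching a count to a dimension does not yield a basis. Worse, your disjointness claim---that the pair of monomials in $x_i$ is independent of the pair in $x_j$---is a statement about elements of $\hat M_\la$, not about formal strings, and is at least as hard as the independence of the $x_i$ themselves. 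So the argument is circular: the ``Chevalley-monomial basis'' you expand against is precisely what would follow \emph{from} the corollary, not something available beforehand.

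The paper bypasses this entirely. It applies $E=\sum_{i=2}^n e_{\al_i}$ to $X=\Span\{x_i\}$; by Proposition~\ref{e_x} the image lands in $X'=\Span\{x'_i\}$, and the $x'_i$ are automatically independent because they are nonzero (Lemma~\ref{e_x_diag}) and carry \emph{pairwise distinct weights}. Injectivity of $E|_X$ for generic $\la$ is obtained by a forward reference to Proposition~\ref{singular} (the singular vector exists only on a proper subvariety of weights, so for generic $\la$ no nontrivial combination of the $x_i$ is killed by all $e_{\al_j}$). The $U_q(\g_-)$-module isomorphism $\hat M_\la\simeq U_q(\g_-)/\sum_{\al\in\Pi^+_\l}U_q(\g_-)f_\al$ then transports independence to all $\la$. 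Your ``alternative route'' via the $e_{\al_j}$'s is in the right spirit, but the paper does not peel the linear system from a Dynkin leaf; it uses the forward reference to get injectivity in one stroke. If you want a self-contained argument along your alternative line, you would need to analyze the $(n-1)\times(n-1)$ matrix of scalars in $e_{\al_j}x_i\sim x'_j$ coming from Proposition~\ref{e_x} and show it is invertible---this is doable but is real work, not the bookkeeping exercise you describe.
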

\begin{proof}
By Proposition \ref{e_x},  the operator $E=\sum_{i=2}^n e_{\al_i}$ sends the linear span  $X=\Span\{x_{i}\}\subset [\ker e_{\al_1}]_{\la-\dt}$  to the linear
span $X'=\Span\{x'_{i}\}$. By Lemma \ref{e_x_diag}, all $x'_{i}\not =0$, have different weights and hence independent.
 We shall see in the next section (cf. Proposition \ref{singular}) that $ \ker E|_X=\{0\}$ for generic  $\la$.
Hence $\{x_{i}\}_{i=2}^n$ are independent for generic $\la$. Thanks
to the $U_q(\g_-)$-module isomorphism $\hat M_\la \simeq U_q(\g_-)/\sum_{\al\in \Pi^+_\l} U_q(\g_-)f_\al$, they
are independent for all $\la$. This proves the statement,
since $\dim [\ker e_{\al_1}]_{\la-\dt}= n-1$.
\end{proof}
Remark that independence of $\{x_{i}\}_{i=2}^n$ affects uniqueness of $v_{\la-\dt} \in \Span \{x_{i}\}_{i=2}^n$, for special $\la$, but not
its existence.

\section{The module $M_\la$}
In this section we  construct the highest weight $U_q(\g)$-module $M_\la$
that supports quantization of the class $G/K$. We define it
as a quotient of $\hat M_\la$ by a proper submodule generated by a singular vector of certain
weight.
First we do it
for symmetric $G/K$ and afterwards extend the solution for
general $K$.
\subsection{The symmetric case}
Consider the simplest symmetric case  $m=2$, $p=n-2$.
In other words, assume  $\k=\s\o(4)\oplus \s\o(P)$, and $\l=\g\l(2)\oplus \s\o(P)$.

Define a vector $v_{\la-\dt}\in [\hat M_\la]_{\la-\dt}$ by
\newcommand{\er}{\mathrm{e}}
$$
v_{\la-\dt}=c_2 x_{2}+\ldots c_{n-2} x_{n-2}+c_{n-1}x_{n-1}+c_n x_n,
\quad
$$
with  the scalar coefficients $c_i$ set to be
$$
c_i= (-1)^i q^{n-i-\frac{1}{2}}+(-1)^iq^{-(n-i-\frac{1}{2})},\quad 2\leqslant i\leqslant n,
$$
for  $N=2n+1$ and
$$
c_{i}=(-q)^{n-1-i}+(-q)^{-(n-1-i)},\quad 2\leqslant i\leqslant n-2,\quad c_{n-1}=c_n=1,
$$
for $N=2n$.
The $n-1$ coefficients $c_i$ satisfy the recurrent system of $n-2$ equations
\be
\begin{array}{ccc}
c_{i-1}+ac_{i}+c_{i+1}= c_{n-1}+c_{n}=0, \> N=2n+1,
\\
c_{i-1}+ac_{i}+c_{i+1}=c_{n-3}+a c_{n-2}+c_{n-1}+c_n= c_{n-2}+a c_{n-1}= c_{n-2}+a c_{n}=0, \> N=2n,
\end{array}
\label{rec_sys_c}
\ee
where $i$ varies from $3$ to $n-1$ in the first line  and from $3$ to $n-3$ in the second.
This system determines $(c_i)_{i=2}^n$ up to a common multiplier.

\begin{lemma}
Up to a scalar factor, $v_{\la-\dt}\in \hat M_\la$ is a unique vector of weight $\la-\dt$ annihilated by  $e_{\al}$, $\al\in \Pi^+-\{\al_2\}$.
\label{almost singular}
\end{lemma}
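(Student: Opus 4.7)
The plan is to parametrize the search via Corollary \ref{basis} and translate the remaining singularity conditions into a linear recurrence on the coefficients. By Corollary \ref{basis}, every weight-$(\la-\dt)$ vector in $\ker e_{\al_1}$ has a unique expansion $v = \sum_{i=2}^n c_i x_i$, and the condition $e_{\al_1} v = 0$ is automatic by Lemma \ref{al1}. It therefore remains to enforce $e_{\al_j} v = 0$ for $j = 3, \ldots, n$.

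Fix such a $j$. By Proposition \ref{e_x}, $e_{\al_j} x_i$ is a scalar multiple of $x'_j$ when $(\al_i, \al_j) \neq 0$ or $i = j$, and vanishes otherwise. Since $x'_j \neq 0$ by Lemma \ref{e_x_diag}, the equation $e_{\al_j} v = 0$ reduces to a single linear relation in the $c_i$ indexed by those $i$ connected to $j$ in the Dynkin diagram of $\g_{p+1}$ (that is, $i \in \{j-1, j, j+1\}$, with the obvious adjustment at the $D_n$ branching point $\al_{n-2}$ and at the short-root endpoint in type $B$). I would compute the precise $q$-scalars in these relations by expanding the action of $e_{\al_j}$ on the explicit Chevalley monomials comprising $x_i$, using the Serre relations (\ref{Serre}) to commute generators past the distinguished factor $[f_{\al_1}, f_{\al_2}]_a$ and invoking Lemmas \ref{omega}--\ref{y_zeros} to discard the absorbed terms. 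The central claim --- and the main obstacle --- is that after this computation the $n-2$ equations match exactly the system (\ref{rec_sys_c}), the parameter $a = q + q^{-1}$ being inherited from the twisted commutator built into each $x_i$.

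Once (\ref{rec_sys_c}) is established, existence follows by direct substitution: the proposed $c_i$ satisfy the three-term recurrence via the identity $(-q)^{k+1} + a(-q)^k + (-q)^{k-1} = 0$ (together with its half-integer shifted analogue for $B_n$), and the boundary relations are verified by inspection in each diagram type. Uniqueness up to scale follows from the tridiagonal-with-branching structure of (\ref{rec_sys_c}): the recurrence propagates uniquely along the chain of simple roots given any two consecutive values, and the terminal relation ($c_{n-1} + c_n = 0$ in type $B$, or the pair $c_{n-2} + a c_{n-1} = c_{n-2} + a c_n = 0$ at the $D_n$ fork together with the amalgamated fork equation) removes every remaining degree of freedom except an overall scalar. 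Since the rank of (\ref{rec_sys_c}) is $n-2$ and the coefficients do not depend on $\la$, the one-dimensional kernel persists for every $\la$, yielding the asserted uniqueness up to a scalar factor.
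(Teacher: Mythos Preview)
Your proposal is correct and follows essentially the same route as the paper: both arguments reduce the search to the span of $\{x_i\}$ via Lemma~\ref{al1} (and, in your case, explicitly Corollary~\ref{basis}), invoke Proposition~\ref{e_x} to collapse each condition $e_{\al_j}v=0$ into a scalar equation $E_j x'_j = 0$, identify the resulting $n-2$ equations with the recurrence~(\ref{rec_sys_c}), and read off uniqueness up to scale from its rank. Your version is simply more explicit about the verification of the recurrence and about why Corollary~\ref{basis} is needed for the uniqueness claim, details the paper compresses into a single sentence.
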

\begin{proof}
First of all, $v_{\la-\dt}$ is annihilated by $e_{\al_1}$, due to Lemma \ref{al1}.
Further proof is based on Proposition \ref{e_x}, stating that $e_{\al_i}v_{\la-\dt}=E_i x'_{i}$,
for some scalars $E_i$, $i=2,\ldots, n$. This yields a system of $n-2$ equations  $E_i=0$,
which is  written down in (\ref{rec_sys_c}) for each parity of $N$.
The coefficients $(c_i)_{i=2}^n$ are determined uniquely, up to a common factor.
\end{proof}
Recall that a vector in a $U_q(\g)$-module  is called singular if
it is annihilated by $\g_+$. In a module with highest weight, singular vectors generate proper
submodules.
\begin{propn}
\label{singular}
Suppose that $\la$ satisfies the condition
$$
q^{2(\al_2,\la)}=
- q^{- P}.
$$
Then the vector $v_{\la-\dt}\in \hat M_\la$ is  singular. Up to a scalar factor, it is
a unique singular vector of weight $\la-\dt$ and it exists
only if $\la$ satisfies the above condition.
\end{propn}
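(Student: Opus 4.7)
The plan is to reduce singularity of $v_{\la-\dt}$ to a single explicit scalar equation on $\la$, then invoke Corollary \ref{basis} for uniqueness.

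\textbf{Reduction to one equation.} By Lemma \ref{almost singular}, $v_{\la-\dt}$ is already annihilated by every $e_{\al}$ with $\al\in \Pi^+\setminus\{\al_2\}$. Hence singularity reduces to the single condition $e_{\al_2}v_{\la-\dt}=0$.

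\textbf{Explicit computation of $e_{\al_2}v_{\la-\dt}$.} In each monomial $x_{i}$ the generator $f_{\al_2}$ occurs twice: once inside the commutator $[f_{\al_1},f_{\al_2}]_a$, and once at the right end, absorbed into $\omega = f_{\al_{\kappa}}\cdots f_{\al_2}v_\la$. Using that $e_{\al_2}$ commutes with $f_{\al_i}$ for $i\neq 1,2,3$, together with the Cartan relation $[e_{\al_2},f_{\al_2}]=\frac{q^{h_{\al_2}}-q^{-h_{\al_2}}}{q-q^{-1}}$ and the $q$-Serre identities (\ref{Serre}), I would reduce each $e_{\al_2}x_{i}$ to an explicit linear combination of Chevalley monomials of weight $\la-\dt+\al_2$. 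The recurrence (\ref{rec_sys_c}) imposed on the coefficients $c_i$ in Lemma \ref{almost singular} was arranged to cancel the outputs $x'_i$ for $i\neq 2$; by the same mechanism, almost all the contributions in $e_{\al_2}\sum c_ix_i$ cancel in pairs, leaving a single scalar multiple of a fixed nonzero monomial. The residual scalar, collecting Cartan weight factors picked up when pushing $e_{\al_2}$ through $f_{\al_3}\cdots f_{\al_{\kappa}}\omega$ down to $v_\la$, should factor as a nonzero universal constant times $q^{2(\al_2,\la)}+q^{-P}$; the $q^{-P}$ comes from the weight of the chain $f_{\al_3}\cdots f_{\al_{\kappa}}\omega$, and $q^{2(\al_2,\la)}$ from the highest-weight evaluation on $v_\la$, exactly as in Lemma \ref{al1}.

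\textbf{Uniqueness and necessity.} Any singular vector of weight $\la-\dt$ lies, in particular, in $[\ker e_{\al_1}]_{\la-\dt}$, which by Corollary \ref{basis} has basis $\{x_{i}\}_{i=2}^n$. Imposing $e_{\al_i}v=0$ for $i=3,\dots,n$ yields, through Proposition \ref{e_x} and the linear independence of the $x'_i$ provided by Lemma \ref{e_x_diag}, precisely the $n-2$ equations in (\ref{rec_sys_c}); up to a common scalar they have the unique solution $(c_i)$. So any singular vector of weight $\la-\dt$ is proportional to $v_{\la-\dt}$, and existence of a nonzero singular vector is equivalent to $e_{\al_2}v_{\la-\dt}=0$, which by Step~2 is equivalent to $q^{2(\al_2,\la)}=-q^{-P}$.

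The main obstacle is the explicit scalar computation in Step~2: one must track which monomials of weight $\la-\dt+\al_2$ appear when $e_{\al_2}$ is commuted across the ordered chain of $f_{\al_i}$'s in each $x_{i}$, and then verify that the combination built with the coefficients $c_i$ prescribed in the definition of $v_{\la-\dt}$ collapses down to a single term with scalar factor $q^{2(\al_2,\la)}+q^{-P}$, separately for the cases $N=5$, $N=2n+1$ with $n\geqslant 3$, and $N=2n$ with $n\geqslant 4$.
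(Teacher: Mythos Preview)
Your plan is correct and follows essentially the same route as the paper: reduce to the single condition $e_{\al_2}v_{\la-\dt}=0$ via Lemma \ref{almost singular}, then compute the residual scalar, and invoke Corollary \ref{basis} together with Proposition \ref{e_x} and Lemma \ref{e_x_diag} for uniqueness.

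There is one efficiency point where you work harder than necessary. In Step~2 you anticipate tracking all monomials produced by commuting $e_{\al_2}$ through each $x_i$ and then arguing a global cancellation ``by the same mechanism'' as the recurrence (\ref{rec_sys_c}). In fact Proposition \ref{e_x} already gives the structure directly: $e_{\al_2}x_i=0$ whenever $(\al_2,\al_i)=0$, i.e.\ for all $i\geqslant 4$ (except that for $N=8$ one also keeps $i=4$, since $\al_2$ is the branch node of $D_4$), and for the remaining $i$ one has $e_{\al_2}x_i\sim x'_2$. So $e_{\al_2}v_{\la-\dt}=E_2\, x'_2$ with $E_2$ depending only on $c_2,c_3$ (resp.\ $c_2,c_3,c_4$ when $N=8$), and the paper simply writes down the resulting linear equation in $q^{\pm(\al_2,\la)}$ and substitutes the explicit $c_i$. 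No separate cancellation argument is needed, and the ``main obstacle'' you flag dissolves into a one-line scalar identity in each case ($N=5$, $N=8$, and the generic $N$). Also note the paper opens by invoking Corollary \ref{basis} to ensure $v_{\la-\dt}\neq 0$, which you should state explicitly.
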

\begin{proof}
By Corollary \ref{basis}, $v_{\la-\dt}\not =0$. In view of Lemma \ref{almost singular}, we  only need to satisfy the condition  $e_{\al_2}v_{\la-\dt}=0$.
From Corollary \ref{basis}, we get $e_{\al_2}v_{\la-\dt}=E_2x_{2}'$ for some  scalar $E_2$. Evaluating $E_2x_{2}'$ and equating
it to zero we get conditions  on  $\la$ for $v_{\la-\dt}$ to be singular.
If $N=5$, we find $q^{(\al_2,\la)}(1-q)=q^{-(\al_2,\la)}(1-q^{-1})$, which immediately gives required
$q^{2(\al_2,\la)}=-q^{-1}$. For for $N=8$ we obtain
$$
  q^{(\al_2,\la)}q^2 c_2+q^{(\al_2,\la)}qc_{3} +  q^{(\al_2,\la)}qc_{4}
=  q^{-(\al_2,\la)}q^{-2}c_2+ q^{-(\al_2,\la)}q^{-1}c_{3}+  q^{-(\al_2,\la)}q^{-1}c_{4}.
$$
For $N=2n>8$ and $N=2n+1\geqslant 7$ we obtain
$$
q^{(\al_2,\la)}q^2c_2+
q^{(\al_2,\la)}qc_{3}
=q^{-(\al_2,\la)}q^{-2}c_2+
q^{-(\al_2,\la)}q^{-1}c_{3}.
$$
Plugging the expressions for $c_2$, $c_3$, $c_4$ in these equations we find that $v_{\la-\dt}$ is singular only if $\la$ satisfies the hypothesis. Its uniqueness follows from Lemma \ref{almost singular}.
\end{proof}

\subsection{The highest weight module $M_\la$ for general $\k$}
\label{module_M_general_k}
In this section we abandon the simplifying ansatz $\ell=0$, $m=2$ and allow for general isotropy subgroup $K$,
as in  (\ref{gr_K}).
The Lie algebra $\k$ of the subgroup $K$ and the maximal Levi subalgebra $\l$ read
\be
\k&=&\g\l(n_1)\oplus \ldots \oplus\g\l(n_\ell)\oplus \s\o(2m)\oplus\s\o(P),
\\
\l&=&\g\l(n_1)\oplus \ldots \oplus\g\l(n_\ell)\oplus \g\l(m)\oplus\s\o(P).
\label{Levi_l}
\ee
Consider the subalgebra $\g'=\g_{p+2}\subset \g$ with
the simple  positive roots $(\al_{n-p-1},\ldots,\al_n)$.
Under this embedding, the root $\al_2$ goes over to $\al_{n-p}$, and the
root $\delta$ reads
$$
\delta=\al_{n-p-1}+2\sum_{i=n-p}^{n-2}\al_i+\al_{n-1}+\al_n, \> \g=\s\o(2n),\quad
\delta=\al_{n-p-1}+2\sum_{i=n-p}^{n}\al_i, \> \g=\s\o(2n+1).
$$
Assuming $\la \in \mathfrak{C}^*_{\k}$, let $\hat M_\la$ be the parabolic Verma module over $U_q(\g)$.
Regarded as a $\g'$-weight by restriction,  $\la$ satisfies the
assumptions of Proposition \ref{singular}. Therefore, there is a singular vector $v_{\la-\dt}$ in the
$U_q(\g')$-submodule of $\hat M_\la$ generated by $v_\la$.
\begin{propn}
\label{singular_Verma}
Suppose that $\la \in \mathfrak{C}^*_{\k}$. Then $v_{\la-\dt}\in \hat M_\la$
is a unique, up to a scalar multiplier, singular vector of weight $\la-\dt$.
\end{propn}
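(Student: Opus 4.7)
The plan is to reduce to Proposition \ref{singular} by restricting $\hat M_\la$ to the subalgebra $U_q(\g')$. Let $\hat M'_\la = U_q(\g')v_\la \subset \hat M_\la$ denote the cyclic $U_q(\g')$-submodule generated by the highest weight vector.

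First I would identify $\hat M'_\la$ with a parabolic Verma module over $U_q(\g')$ of exactly the form treated in Proposition \ref{singular}. Among the simple roots $\{\al_{n-p-1},\ldots,\al_n\}$ of $\g'$, only $\al_{n-p}$ lies outside $\Pi^+_\l$: indeed $\al_{n-p-1}$ sits in the $\g\l(m)$-block while $\al_{n-p+1},\ldots,\al_n$ make up the $\s\o(P)$-block. Hence $f_\al v_\la=0$ for all these roots except $\al_{n-p}$, which identifies $\hat M'_\la$ with the parabolic Verma module over $U_q(\g')$ induced from the Levi $\g\l(2)\oplus\s\o(P)\subset\g'$ (the surjection from the universal such module is an isomorphism by PBW dimension count in $\n_{\l'}^-$). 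By hypothesis $\la\in\mathfrak{C}^*_\k$ forces $q^{2(\al_{n-p},\la)}=-q^{-P}$, which is the scalar condition of Proposition \ref{singular} after the identification of $\al_{n-p}$ with $\al_2$ of $\g'$. Proposition \ref{singular} therefore supplies a singular vector $v_{\la-\dt}\in[\hat M'_\la]_{\la-\dt}$, unique up to scalar among $U_q(\g')$-singular vectors of this weight.

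Second, I would show that $[\hat M_\la]_{\la-\dt}=[\hat M'_\la]_{\la-\dt}$. By the triangular factorization (\ref{tr_fac}), every weight vector in $\hat M_\la$ expands in PBW monomials built from negative root vectors of $\n_\l^-$ applied to $v_\la$. Such a monomial has weight $\la-\dt$ only if the corresponding roots sum to $\dt$; since $\dt$ has zero coefficient on $\al_1,\ldots,\al_{n-p-2}$, every root involved must lie in $R^+_{\g'}$, and the monomial already belongs to $\hat M'_\la$.

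Third, I would verify singularity of $v_{\la-\dt}$ with respect to the full $U_q(\g)$. It is annihilated by $e_\al$ with $\al\in\Pi^+_{\g'}$ by the first step. For a simple root $\al_i\in\Pi^+\setminus\Pi^+_{\g'}$, the relation $[e_{\al_i},f_{\al_j}]=0$ for $i\neq j$ shows that $e_{\al_i}$ commutes with every Chevalley generator appearing in $v_{\la-\dt}$ and ultimately acts on $v_\la$, which it kills. Uniqueness over $U_q(\g)$ then follows from the weight-space identification of step two combined with the uniqueness clause of Proposition \ref{singular}. The principal point of care will be in the first step: cleanly matching the Levi, the subalgebra $\g'$, the root $\al_{n-p}$, and the integer $P$ with their counterparts in Proposition \ref{singular}, and confirming that $\hat M'_\la$ genuinely coincides with the relevant parabolic Verma module over $U_q(\g')$ rather than a proper quotient.
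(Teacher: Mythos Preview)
Your approach is correct and coincides with the paper's: both reduce singularity with respect to $\Pi^+_{\g'}$ to Proposition \ref{singular}, then kill the remaining $e_{\al_i}$ for $\al_i\in\Pi^+\setminus\Pi^+_{\g'}$ by the commutation $[e_{\al_i},f_{\al_j}]=0$ (since $i<n-p-1\leqslant j$). Your step~2, the weight-space identity $[\hat M_\la]_{\la-\dt}=[\hat M'_\la]_{\la-\dt}$, makes explicit what the paper leaves as the one-line remark ``it is unique up to a factor as it is so for $U_q(\g')$''; the argument via simple-root support of $\dt$ is cleaner if phrased with Chevalley monomials rather than higher PBW root vectors (so that membership in $U_q(\g')$ is automatic), and this also dispenses with the separate verification in step~1 that $\hat M'_\la$ is not a proper quotient of the parabolic Verma module.
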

\begin{proof}
The vector $v_{\la-\dt}$  is annihilated by $e_\al$ for $\al\in \Pi^+_{\g'}$, by to Proposition
\ref{singular}. Furthermore, $v_{\la-\dt}$ is constructed out of $f_\bt\in \Pi^+_{\g'}$, which
commute with $e_\al$ for $\al\in \Pi^+ -\Pi^+_{\g'}$. Consequently, such $e_\al$ annihilate $v_{\la-\dt}$ too.
Therefore $v_{\la-\dt}$ is singular in $\hat M_\la$.
 It is unique up to a factor as it is so for $U_q(\g')$.
\end{proof}
\begin{definition} Assuming $\la \in \mathfrak{C}^*_{\k}$, we denote by $\hat M_{\la-\dt}\subset \hat M_\la$
the submodule generated by $v_{\la-\dt}$ and we denote by $M_\la$ the quotient module  $\hat M_\la/\hat M_{\la-\dt}$.
\end{definition}
\noindent
The module  $M_\la$  is the key object of our approach to quantization and of our further study.

Next we prove that $M_\la$ is free as a $\C[\![\hbar]\!]$-module. There is a  PBW basis in $U_\hbar(\g_-)$
 generated by ordered quantum root vectors $f_\mu$, $\mu \in R^+$, which are defined through an action of the
quantum Weyl group on the generators, \cite{CP}. This basis establishes a natural $U_\hbar(\h)$-linear isomorphism of
$U_\hbar(\g_-)$ and $U(\g_-)\tp \C[\![\hbar]\!]$.
It is argued in \cite{M5} that, over the ring of scalars $\C[\![\hbar]\!]$,
one can arbitrarily change the ordering of $f_\mu$ and arbitrarily deform $f_\mu$ within $[U_\hbar(\g_-)]_\mu$.
Reordered deformed root vectors still generate a PBW-like basis. To apply this argument  to the orthogonal case,
we must find  an appropriate element $f_\dt\in [U_\hbar(\g_-)]_{-\dt}$.
\begin{lemma}
\label{q-root_vec}
There exists a deformation $f_\dt\in U_\hbar(\g_-)=U_q(\g_-)$ of the classical root vector of root $-\dt$ such that  $v_{\la-\dt}=f_\dt v_\la$.
\end{lemma}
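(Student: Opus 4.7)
The plan is to define $f_\dt$ via the triangular factorization as the operator that sends $v_\la$ to $v_{\la-\dt}$, and then identify its leading $\hbar\to 0$ behaviour with the classical highest root vector of weight $-\dt$.

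\textbf{Step 1 (Existence).} As recorded after (\ref{tr_fac}), the triangular factorization combined with the facts that $v_\la$ is annihilated by $U_\hbar(\n_\l^+)$ and acted on by scalars under $U_\hbar(\l)$ gives a $U_\hbar(\h)$-module isomorphism
\begin{equation*}
U_\hbar(\n_\l^-) \;\longrightarrow\; \hat M_\la,\qquad u\mapsto u v_\la.
\end{equation*}
Restricting to the weight $-\dt$ subspace yields a unique element $f_\dt\in[U_\hbar(\n_\l^-)]_{-\dt}$ with $f_\dt v_\la=v_{\la-\dt}$. Since $U_\hbar(\n_\l^-)$ is generated by negative root vectors without any Cartan factors, it sits inside $U_\hbar(\g_-)=U_q(\g_-)$, which gives the asserted containment.

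\textbf{Step 2 (Classical limit).} It remains to check that the image $\bar f_\dt\in U(\n_\l^-)$ of $f_\dt$ modulo $\hbar$, expanded in a classical PBW basis of $[U(\n_\l^-)]_{-\dt}$, has a nonzero coefficient on the classical root vector of weight $-\dt$. In the symmetric case $\dt=\ve_1+\ve_2$, the explicit combination $v_{\la-\dt}=\sum_i c_i x_i$ is built out of brackets $[f_{\al_1},f_{\al_2}]_a=f_{\al_1}f_{\al_2}-(q+q^{-1})f_{\al_2}f_{\al_1}$, which at $q=1$ become $[f_{\al_1},f_{\al_2}]-f_{\al_2}f_{\al_1}$; the first summand is the classical root vector $f_{-\al_1-\al_2}$, and the second is PBW-decomposable. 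Iteratively commuting the successive $f_{\al_3},\ldots,f_{\al_n}$ through this bracket in the ordered products prescribed by the definitions of the $x_i$ reproduces the standard Chevalley--Serre construction of the classical highest root vector $f_{-\dt}$ (modulo decomposable monomials of length $\geqslant 2$). For general $\k$ the same argument applies to the $U_q(\g')$-submodule from Proposition~\ref{singular_Verma}, since $v_{\la-\dt}$ is built entirely from simple roots of $\g'=\g_{p+2}$.

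\textbf{Main obstacle.} The critical point is that the coefficient of the classical root vector $f_{-\dt}$ in $\bar f_\dt$ must be nonzero; otherwise $f_\dt$ would fail the deformation property and could not serve as a PBW generator. I expect to handle this cleanly via the Shapovalov pairing technique already used in the proof of Lemma~\ref{e_x_diag}: denoting by $\bar e_\dt$ the classical positive root vector dual to $f_{-\dt}$, the value $\langle v_\la^\star \bar e_\dt,\,v_{\la-\dt}\rangle\big|_{\hbar=0}$ isolates precisely the coefficient in question, and it can be computed from the formulas for $\langle{x'}_2^\star,x'_2\rangle$ and $\langle{x''}_i^\star,x''_i\rangle$ established in Lemma~\ref{e_x_diag}, where it is seen to be nonzero for generic $\la$ (and hence for all $\la$ by the $U_q(\g_-)$-module structure). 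This nonvanishing certifies that $f_\dt$ genuinely deforms the classical root vector of weight $-\dt$, completing the proof.
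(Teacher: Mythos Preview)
Your Step 1 is fine and matches the paper: the triangular factorization produces a unique lift $f_\dt\in[U_\hbar(\n_\l^-)]_{-\dt}$ of $v_{\la-\dt}$, and this coincides with what the paper calls $\phi$ (viewed in $U_\hbar^-/U_\hbar^-\l_-$, which is naturally identified with $U_\hbar(\n_\l^-)$).

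Your Step 2, however, contains a genuine gap. The claim that the pairing $\langle v_\la^\star \bar e_\dt,\,\cdot\,\rangle$ \emph{isolates} the $f_{-\dt}$-coefficient is incorrect. Decomposable PBW monomials of weight $-\dt$ also pair nontrivially with $v_\la^\star\bar e_\dt$. For instance, in the symmetric case $\dt=\ve_1+\ve_2$ one has, for $i\geqslant 3$,
\[
\langle v_\la^\star e_{\ve_1+\ve_2},\,f_{\ve_1+\ve_i}f_{\ve_2-\ve_i}v_\la\rangle
\;\sim\;\langle v_\la^\star,\,e_{\ve_2-\ve_i}f_{\ve_2-\ve_i}v_\la\rangle
\;=\;(\la,\ve_2-\ve_i),
\]
which is nonzero for generic $\la$. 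So the pairing records a linear combination of all PBW coefficients, not the single one you want. Worse, at the special $\la$ for which $v_{\la-\dt}$ is actually singular, the classical limit is annihilated by all of $\g_+$, in particular by $\bar e_\dt$, so the pairing you propose vanishes identically. The references to the formulas in Lemma~\ref{e_x_diag} do not repair this, since those compute pairings of different vectors at different weights.

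The paper's argument bypasses any explicit PBW expansion. It exploits the \emph{uniqueness} statement of Lemma~\ref{almost singular}: up to scalar, $\phi$ is the unique class in $[U_\hbar^-/U_\hbar^-\l_-]_{-\dt}$ satisfying $[e_\al,\phi]\equiv 0\pmod{U_\hbar^-\l}$ for all $\al\in\Pi^+_\l$. Modulo $\hbar$, the classical root vector $f_{-\dt}$ satisfies the same system, because $\dt-\al$ is never a root when $\al\in\Pi^+_\l$, so $[e_\al,f_{-\dt}]=0$ outright. Uniqueness then forces the classical limit of $\phi$ to equal (the projection of) $f_{-\dt}$, and one finishes by adjusting the $U_\hbar^-\l_-$-component, which acts trivially on $v_\la$ anyway. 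This is both shorter and avoids the delicate coefficient chase entirely; you should replace your Step~2 with this uniqueness argument.
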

\begin{proof}
Put $U_\hbar^-=U_\hbar(\g_-)$, $U^-=U(\g_-)$, and
define $\phi\in U_\hbar^-/ U_\hbar^-\l_-$ from the presentation $v_{\la-\dt}=\phi v_\la$. Note that $\phi$ is independent of
$\la$.
By Lemma \ref{almost singular},  $\phi\in   [U_\hbar^-/ U_\hbar^-\l_-]_{-\dt}$ is unique, up to a scalar multiplier,  solution of the system $[e_\al, \phi] =0 \mod U_\hbar^- \l$,  $\al \in \Pi^+_\l$.
Modulo $\hbar$, the classical root vector $f\in [\g]_{-\dt}$ solves this system. Indeed, it commutes with all such $e_\al$
as $\dt-\al$ is not a root once $\al \in \Pi^+_\l$. Therefore, upon a proper normalization,
the projection of $f$ to $U^-/ U^-\l_-$ coincides with the zero fiber of $\phi$. Regarding $f$
as an element of $U_\hbar^-=U_\hbar^-/U_\hbar^- \l_-\oplus U_\hbar^- \l_-$ under the natural linear isomorphism $U_\hbar^-\simeq U^-\tp \C[\![\hbar]\!]$, we
define its deformation $f_\dt$ by changing the $U_\hbar^-/ U_\hbar^-\l_-$-component to $\phi$. The
$U_\hbar^-\l_-$-component of $f$ can be replaced with its arbitrary deformation within $[U_\hbar^-\l_-]_{-\dt}$.
\end{proof}
Note that, in the symplectic case  \cite{M5}, the element $f_\dt$ participates in construction of $v_{\la-\dt}$.
In this exposition, we have introduced $v_{\la-\dt}$ in a simpler way, at the price of Lemma \ref{q-root_vec}.
\begin{propn}
The module $M_\la$ is free over $\C[\![\hbar]\!]$.
\end{propn}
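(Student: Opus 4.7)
The plan is to show that $M_\la$ is $\hbar$-torsion-free; since $\hat M_\la$ is a free $\C[\![\hbar]\!]$-module and $\C[\![\hbar]\!]$ is a discrete valuation ring, this is equivalent to $\hat M_{\la-\dt}$ being $\hbar$-saturated in $\hat M_\la$, and therefore to freeness of the quotient. The ambient module $\hat M_\la\cong U_\hbar(\n^-_\l)\tp \C_\la$ is topologically free by the triangular factorization (\ref{tr_fac}), and each weight space $[\hat M_\la]_\mu$ is free of finite rank equal to the classical dimension $\dim_\C[\hat M^{\mathrm{cl}}_\la]_\mu$. Its submodule $[\hat M_{\la-\dt}]_\mu$ is, as a submodule of a finite-rank free module over a discrete valuation ring, automatically free of some rank $r_\mu$. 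Freeness of $M_\la$ reduces to proving the equality $r_\mu=\dim_\C[\hat M^{\mathrm{cl}}_{\la-\dt}]_\mu$ in every weight.

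The lower bound $r_\mu\geq \dim_\C[\hat M^{\mathrm{cl}}_{\la-\dt}]_\mu$ is delivered by Lemma \ref{q-root_vec}: the identification $v_{\la-\dt}=f_\dt v_\la$ with $f_\dt\in U_q(\n^-_\l)$ a PBW-like deformation of the classical root vector of weight $-\dt$ implies that the mod-$\hbar$ reduction of $v_{\la-\dt}$ is the classical singular vector $v^{\mathrm{cl}}_{\la-\dt}$, and any element $u\,v_{\la-\dt}\in\hat M_{\la-\dt}$ reduces to $u^{\mathrm{cl}}v^{\mathrm{cl}}_{\la-\dt}\in\hat M^{\mathrm{cl}}_{\la-\dt}$. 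Lifting classical generators to quantum ones via the PBW-basis isomorphism $U_\hbar(\g_-)\cong U(\g_-)\tp\C[\![\hbar]\!]$ shows that the induced reduction map $[\hat M_{\la-\dt}]_\mu/\hbar[\hat M_{\la-\dt}]_\mu\twoheadrightarrow [\hat M^{\mathrm{cl}}_{\la-\dt}]_\mu$ is surjective, giving the required inequality on ranks.

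The main obstacle is the reverse inequality $r_\mu\leq \dim_\C[\hat M^{\mathrm{cl}}_{\la-\dt}]_\mu$, which rules out the appearance of ``quantum-only'' independent vectors in $[\hat M_{\la-\dt}]_\mu$ beyond the deformed classical generators. I would secure it by fixing a PBW-adapted choice of ordered deformed root vectors of $U_\hbar(\g_-)$ that includes $f_\dt$ (using the flexibility in ordering and deformation recalled in the paragraph preceding Lemma \ref{q-root_vec}), and verifying that the $\C[\![\hbar]\!]$-span of quantum monomials $u\,v_{\la-\dt}$ indexed by a PBW basis of the classical quotient $U(\g_-)/\mathrm{Ann}(v^{\mathrm{cl}}_{\la-\dt})$ exhausts $[\hat M_{\la-\dt}]_\mu$. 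This is a Nakayama-type argument: a $\C[\![\hbar]\!]$-linear combination of such monomials that vanishes in $[\hat M_\la]_\mu$ must, by reduction mod $\hbar$ and induction on $\hbar$-order, already vanish classically, forcing coefficientwise triviality. Once the matching of ranks $r_\mu=\dim_\C[\hat M^{\mathrm{cl}}_{\la-\dt}]_\mu$ is established in each weight, the quotient $[M_\la]_\mu$ has $\C[\![\hbar]\!]$-rank equal to its classical dimension and is therefore free; summing over weights gives freeness of $M_\la$.
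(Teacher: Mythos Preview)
Your approach is essentially the same as the paper's: both hinge on Lemma~\ref{q-root_vec} and the inclusion of $f_\dt$ in a PBW-like basis of $U_\hbar(\g_-)$, after which freeness follows by the argument of \cite{M5}, Proposition~6.2. Your reduction to the rank equality $r_\mu=\dim_\C[\hat M^{\mathrm{cl}}_{\la-\dt}]_\mu$ is correct (it is equivalent to $\hbar$-saturation of the submodule), and the lower bound is clear.

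One comment on execution: the ``Nakayama-type'' step you sketch for the upper bound, as written, risks circularity. To show that your chosen quantum monomials $u\,v_{\la-\dt}$ \emph{span} $[\hat M_{\la-\dt}]_\mu$ by reducing mod $\hbar$ and lifting, you would need to know that the difference lies in $\hbar\,\hat M_{\la-\dt}$ rather than merely in $\hbar\,\hat M_\la$---and that is precisely the saturation you are trying to prove. The clean way around this (and what the PBW-inclusion of $f_\dt$ actually buys you, cf.\ \cite{M5}) is more direct: with $f_\dt$ placed last in the ordering, every PBW monomial $m_I\in U_\hbar(\n^-_\l)$ applied to $v_{\la-\dt}=f_\dt v_\la$ yields a PBW monomial $m_I f_\dt\, v_\la$ of $\hat M_\la$, and these are exactly the PBW monomials containing at least one factor $f_\dt$. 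Thus $\hat M_{\la-\dt}$ is the $\C[\![\hbar]\!]$-span of a subset of the PBW basis of $\hat M_\la$, and the complementary monomials furnish an explicit free complement. This gives freeness of $M_\la$ without appealing to rank-counting at all.
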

\begin{proof}
The quantum "root vector" $f_\dt$ can be included in a PBW-like basis as discussed in \cite{M5}.
The rest of the proof is similar to the proof of Proposition 6.2 therein.
\end{proof}

We are going to  prove that the quantization of the conjugacy class $G/K$ can be realized by linear operators on
$M_\la$. To this  end, we study the module structure of the tensor product $\C^N\tp M_\la$ in the following section.
Again we start with the symmetric case $\ell=0$.
For technical reasons we process separately the cases of even and odd $N$.
\section{The $U_q(\g)$-module $\C^N\tp M_\la$ in the symmetric case}
In this section, we study the $U_q(\g)$-module  $\C^N\tp M_\la$.
To a large extent, the difference between Levi and non-Levi classes is concentrated in the
"symmetric part" of the stabilizer, so we consider this case first, as we did in the preceding sections.
We assume that the isotropy subalgebra $\k$ consists of two orthogonal blocks of rank $m$ and $p$,
$\k=\s\o(2m)\oplus \s\o(P)$,  where $P=2p$ for the $D$-series and $P=2p+1$ for
the $B$-series.

When restricted to the Levi subalgebra $\l=\g\l(m)\oplus \s\o(P)$, the natural $\g$-representation $\C^{N}$ splits into three irreducible
sub-representations, $\C^{N}=\C^{m}\oplus\C^{P}\oplus\C^{m} $. The submodule $\C^P$ carries the natural representation  of $\s\o(P)\subset \l$
while the two copies of $\C^m$ are the natural and conatural submodules of $\g\l(m)\subset \l$.
This reduction extends to the pair of the quantum groups $U_q(\l)\subset U_q(\g)$.

The natural and conatural $\g\l(m)$-submodules $\C^m$ glue up to the natural module of the block $\s\o(2m)\subset \k$ leading to the irreducible decomposition $\C^{N}=\C^{2m}\oplus\C^{P}$ over  $\k$. We cannot write a quantum version of this reduction because we do not know a natural candidate for the subalgebra $U_q(\k)\subset U_q(\g)$. Yet we bypass this obstacle.

We fix the standard basis  $\{w_i\}_{i=1}^{N}\subset \C^{N}$
of columns with the only non-zero entry in the $i$-th position.
The highest weights  of the irreducible $U_q(\l)$-submodules in $\C^N$ are $\ve_1$, $\ve_{m+1}$, $-\ve_{m}$, and
the corresponding weight vectors are $w_1$, $w_{m+1}$, $w_{N+1-m}$.
For generic $\la\in \mathfrak{C}^*_{\l}$, the tensor product $\C^N\tp \hat M_\la$ splits into the direct sum of three $U_q(\g)$-modules
 $\C^N\tp \hat M_\la=\hat M_1\oplus \hat M_2\oplus \hat M_3$, of highest weights $\nu_1=\la+\ve_1$, $\nu_2=\la+\ve_{m+1}$,  and $\nu_3=\la-\ve_{m}$, respectively.
 We shall prove, for almost all $\la \in \mathfrak{C}^*_{\k}$, the direct decomposition $\C^N\tp M_\la=M_1\oplus M_2$,
  where  $M_i$ are the images of $\hat M_i$  under the projection $\C^N\tp \hat M_\la\to \C^N\tp M_\la$. This
results in a degree reduction of the minimal polynomial for the quantum coordinate matrix (a similar effect
is produced on the classical coordinate matrix by the transition from $G/L$ to $G/K$).
This is the key step of our strategy
.

Let $u_{\nu_i}$, $i=1,2, 3$, denote the canonical generators of $\hat M_i\subset \hat M_\la$.
It can be shown that  $u_{\nu_2}\sim w_1\tp v_{\la-\dt}$ and vanishes in $\C^N\tp M_\la$, $\la\in \mathfrak{C}^*_{\k}$. The vector $u_{\nu_1}=w_1\tp v_\la$ carries the highest weight $\la+\ve_1$ in $\C^N\tp \hat M_\la$ and generates the submodule $\hat M_{1}$.
The singular vector of weight $\la+\ve_{m+1}$ that generates $\hat M_{2}$ reads
\be
u_{\nu_2}&=&\frac{q^{(\al,\la)}-q^{-(\al,\la)}}{q-q^{-1}}w_{m+1}\tp v_\la+(-q)^{-1}w_{m}\tp f_{\al_m} v_\la
+\ldots +(-q)^{-m} w_{1}\tp f_{\al_1}\ldots f_{\al_m} v_\la.
\nn
\ee
It is calculated in \cite{M5} for the symplectic case and still valid for orthogonal $\g$, because
it  involves only the generators of $\g\l(n)\subset \g$. Note that vector $u_{\nu_2}$ is also singular in $\C^N\tp M_\la$, as it is not nil there.
The following fact is established in
\cite{M5}.
\begin{lemma}
\label{gl-sym}
The singular vector $u_{\nu_2}$ is equal to
$q^{-m}\frac{q^{(\al,\la)+m}-q^{-(\al,\la)-m}}{q-q^{-1}} w_{m+1}\tp v_\la$ modulo
$\hat M_1$.
\end{lemma}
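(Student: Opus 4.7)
The plan is to reduce each summand of $u_{\nu_2}$ to a scalar multiple of $w_{m+1}\tp v_\la$ modulo $\hat M_1$ and then aggregate the coefficients. In the symmetric case $\l=\g\l(m)\oplus\s\o(P)$, the weight $\la$ is orthogonal to $\al_j$ for $j\ne m$ and $f_{\al_j}v_\la=0$ for $j\ne m$. A direct induction on $j$ using $\Delta(f_{\al_j})=f_{\al_j}\tp q^{-h_{\al_j}}+1\tp f_{\al_j}$ gives $f_{\al_{j-1}}\cdots f_{\al_1}(w_1\tp v_\la)=w_j\tp v_\la$ for $1\le j\le m$, so every such $w_j\tp v_\la$ lies in $\hat M_1$.

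The key step is the identity, for $1\le j\le m-1$,
\[
f_{\al_j}f_{\al_{j+1}}\cdots f_{\al_m}(w_j\tp v_\la)=q^{-1}w_{j+1}\tp f_{\al_{j+1}}\cdots f_{\al_m}v_\la + w_j\tp f_{\al_j}\cdots f_{\al_m}v_\la,
\]
together with $f_{\al_m}(w_m\tp v_\la)=q^{-(\al,\la)}w_{m+1}\tp v_\la + w_m\tp f_{\al_m}v_\la$ at $j=m$. Both follow from expanding the coproduct of $f_{\al_j}\cdots f_{\al_m}$ as a sum over subsets $T\subseteq\{j,\ldots,m\}$: among the $2^{m-j+1}$ contributions, only $T=\emptyset$ and $T=\{j\}$ survive, because the presence of any $f_{\al_k}$ with $k>j$ in the first tensor factor annihilates $w_j$ in the natural representation. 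The coefficient $q^{-1}$ comes from $q^{-h_{\al_j}}$ acting on the weight $\la-\al_{j+1}-\cdots-\al_m$, using $(\al_j,\al_{j+1})=-1$ and $(\al_j,\la)=0$.

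Since the left-hand sides lie in $\hat M_1$, modulo $\hat M_1$ the identities read $w_j\tp f_{\al_j}\cdots f_{\al_m}v_\la\equiv -q^{-1}w_{j+1}\tp f_{\al_{j+1}}\cdots f_{\al_m}v_\la$ and $w_m\tp f_{\al_m}v_\la\equiv -q^{-(\al,\la)}w_{m+1}\tp v_\la$. Iterating, for $1\le k\le m$,
\[
w_{m+1-k}\tp f_{\al_{m+1-k}}\cdots f_{\al_m}v_\la\equiv (-1)^k q^{-(\al,\la)-(k-1)}w_{m+1}\tp v_\la\pmod{\hat M_1}.
\]
Substituting into the expansion of $u_{\nu_2}$, noting $(-q)^{-k}(-1)^k=q^{-k}$ and $\sum_{k=1}^m q^{-2k}=q^{-m-1}[m]_q$, the coefficient of $w_{m+1}\tp v_\la$ collapses to $[(\al,\la)]_q+q^{-(\al,\la)-m}[m]_q$, which equals $q^{-m}\frac{q^{(\al,\la)+m}-q^{-(\al,\la)-m}}{q-q^{-1}}$, the claimed value.

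The only substantive obstacle is the coproduct analysis in the second step: verifying that the contributions of all subsets other than $\emptyset$ and $\{j\}$ vanish. This is a direct weight-based check combining the natural-representation action of $\C^N$ with the parabolic annihilations $f_{\al_j}v_\la=0$ for $j\ne m$, and requires no deeper input.
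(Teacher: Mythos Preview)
Your argument is correct. The paper does not supply its own proof of this lemma but simply cites \cite{M5}, where the identical $\g\l$-type computation is carried out; your direct coproduct expansion --- reducing each $w_{m+1-k}\tp f_{\al_{m+1-k}}\cdots f_{\al_m}v_\la$ modulo $\hat M_1$ via the recursion coming from $f_{\al_j}\cdots f_{\al_m}(w_j\tp v_\la)\in\hat M_1$ --- is exactly the expected route and the telescoping of the resulting geometric sum is handled correctly.
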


To proceed with the analysis of module structure of $\C^N\tp \hat M_\la$, we have to
develop a special diagram technique, which takes the rest of this section.
The action of a positive (negative) Chevalley generator on the standard basis $\{w_i\}_{i=1}^{N}$
features the following property: the line $\C w_i$ is either annihilated or mapped onto the line $\C w_k$ for some $k$.
 It is convenient to depict  such an action graphically.
Further we consider negative generators, since positive can be obtained by reversing
the arrows.

Up to an invertible scalar multiplier, the action of the family
$\{f_{\al}\}_{\al \in \Pi^+}\subset U_q\bigl(\s\o(2n+1)\bigr)$ on the standard basis $\{w_i\}_{i=1}^{2n+1}$ in $\C^{2n+1}$
is encoded in the following scheme:

\begin{center}
\begin{picture}(390,45)
\put(0,0){$w_{2n+1}$}
\put(10,15){\circle{3}}
\put(45,15){\vector(-1,0){30}}
\put(50,15){\circle{3}}
\put(100,15){$\ldots$}
\put(85,15){\vector(-1,0){30}}
\put(45,0){$w_{2n}$}
\put(160,15){\circle{3}}
\put(155,15){\vector(-1,0){30}}
\put(150,0){$w_{n+2}$}
\put(195,15){\vector(-1,0){30}}
\put(200,15){\circle{3}}
\put(190,0){$w_{n+1}$}
\put(235,15){\vector(-1,0){30}}
\put(240,15){\circle{3}}
\put(230,0){$w_{n}$}
\put(275,15){\vector(-1,0){30}}
\put(345,15){\vector(-1,0){30}}
\put(350,15){\circle{3}}
\put(345,0){$w_{2}$}
\put(290,15){$\ldots$}
\put(385,15){\vector(-1,0){30}}
\put(390,15){\circle{3}}
\put(385,0){$w_{1}$}
\put(25,20){$f_{\al_1}$}
\put(65,20){$f_{\al_2}$}
\put(135,20){$f_{\al_{n-1}}$}
\put(175,20){$f_{\al_{n}}$}
\put(215,20){$f_{\al_{n}}$}
\put(255,20){$f_{\al_{n-1}}$}
\put(325,20){$f_{\al_{2}}$}
\put(365,20){$f_{\al_{1}}$}

\end{picture}
\end{center}
This diagram has simple linear structure without branching and cycles.
The diagram for $\g=\s\o(2n)$ is more complicated:
\begin{center}
\begin{picture}(440,60)
\put(0,0){$w_{2n}$}
\put(10,15){\circle{3}}
\put(50,15){\circle{3}}
\put(45,15){\vector(-1,0){30}}
\put(85,15){\vector(-1,0){30}}
\put(100,15){$\ldots$}
\put(155,15){\vector(-1,0){30}}
\put(45,0){$w_{2n-1}$}
\put(160,15){\circle{3}}
\put(150,0){$w_{n+2}$}
\put(195,15){\vector(-1,0){30}}
\put(200,15){\circle{3}}
\put(190,0){$w_{n+1}$}
\qbezier(163,22)(200,48)(237,22) \put(162,21.5){\vector(-2,-1){2}}
\qbezier(203,22)(240,48)(277,22) \put(202,21.5){\vector(-2,-1){2}}
\put(240,15){\circle{3}}
\put(230,0){$w_{n}$}
\put(275,15){\vector(-1,0){30}}
\put(280,15){\circle{3}}
\put(270,0){$w_{n-1}$}
\put(330,15){$\ldots$}
\put(315,15){\vector(-1,0){30}}
\put(385,15){\vector(-1,0){30}}
\put(390,15){\circle{3}}
\put(385,0){$w_{2}$}
\put(425,15){\vector(-1,0){30}}
\put(430,15){\circle{3}}
\put(425,0){$w_{1}$}
\put(25,20){$f_{\al_1}$}
\put(65,20){$f_{\al_2}$}
\put(135,20){$f_{\al_{n-2}}$}
\put(172,20){$f_{\al_{n-1}}$}
\put(195,38){$f_{\al_{n}}$}
\put(252,20){$f_{\al_{n-1}}$}
\put(290,20){$f_{\al_{n-2}}$}
\put(235,38){$f_{\al_{n}}$}
\put(365,20){$f_{\al_{2}}$}
\put(405,20){$f_{\al_{1}}$}
\end{picture}
\end{center}
The arrows designate the action up to a non-zero scalar, which
is equal to $-1$ on the left part of the diagram and $+1$ on the right part.
More exactly, moving along the diagram from left to right, the first occurrence of
$f_\al$ produces $-1$, while the second gives $+1$. In the matrix language, the non-zero entries above and below the skew diagonal are
$+1$ and $-1$, respectively. Along with this sign rule, the above graphs for $\{f_\al\}$ and $\{e_\al\}$ determine the
representation of $U_q(\g)$ on $\C^N$.

Further we adopt the following convention. If a vector $v$ is proportional to a vector $u$ with a scalar coefficient
$c\not =0$, i.e. $v=cu$, we write $v\simeq u$  and say that $v$ is equivalent to $u$.
If the difference $v-cu$ belongs to a vector space $W$, we write $v\simeq u\!\!\mod W$.

To study the action of $\{f_{\al}\}_{\al \in \Pi^+}$ on $\C^N\tp \hat M_\la$, we develop our diagram language further.
First of all, we transpose the above diagrams of the natural representation to
columns, so the arrows become vertical and oriented downward.
Suppose $(v^i)_{i=1}^l\in \hat M_\la$ is a finite sequence of vectors.
We associate a horizontal graph with nodes $(v^i)_{i=1}^l$ and arrows designating the action of $\{f_{\al}\}_{\al\in \Pi^+}$ on $\hat M_\la$,
in a similar fashion as vertical but with the following difference: it involves {\em not all} possible arrows $v^k\leftarrow v^i$
but only those of our interest. We still assume that the chosen arrows are isomorphisms of lines
spanned by $v^i$.
This implies that, up to a non-zero scalar factor, the nodes are {\em determined} by the subset of
maximal nodes (having no inward arrows) and by the set of arrows. In our case, there will be only one maximal vector $v^1=v_\la$,
hence the other nodes are determined by arrows. This implies that the horizontal graph is connected.

Let $\mathrm{Arr}(w_k)$ denote the set of negative Chevalley generators  whose arrows are directed from $w_k$.
Similarly, $\mathrm{Arr}(v^i)$ denote the set of generators whose arrows are directed from $v^i$.
For instance, $\mathrm{Arr}(w_k)$  consists of only one element for $k<2n+1$ and is empty for $k=2n+1$, in the series $B$.
We say that arrow $f$ has length $k$ if it sends node $i$ to the node $i+k$. All vertical arrows but $f_{\al_n}$ for
even $N$ have length $1$.

We construct a "tensor product" of vertical and horizontal graphs to be a diagram
with the nodes $w_k\tp v^i$, $k=1,\ldots, N$, $i=1,\ldots, l$. The factor $w_k$ marks the rows from top to bottom,
while $v^i$ marks the columns from  right to left.
The vertical and horizontal arrows designate the action of the designated Chevalley generators on the {\em tensor factors},
up to a scalar multiplier.
Under these assumptions, such diagrams  provide information about the action of $\{f_{\al}\}_{\al\in \Pi^+}$
not just on the tensor factors
but on the entire tensor product $\C^N\tp \hat M_\la$, in the following sense:
\begin{propn}
Suppose a horizontal arrow designates the action of  a Chevalley generator $f_{\al}$ on $v^i$.
If $f_{\al}\not \in \mathrm{Arr}(w_k)$, then $f_{\al}(w_k\tp v^i)\simeq  w_k\tp (f_{\al}v^i)$, otherwise
$f_{\al}(w_k\tp v^i)\simeq   w_k\tp f_{\al}v^i$ modulo $\C f_{\al}w_k\tp v^i$.
\end{propn}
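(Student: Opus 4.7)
The plan is to reduce the statement to a direct computation using the coproduct of a negative Chevalley generator on the tensor product, exploiting the fact that $v^i$ is a weight vector in $\hat M_\la$.

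First I would write out the action via the comultiplication as
\[
 f_{\al}(w_k \tp v^i) = (f_{\al} w_k) \tp (q^{-h_{\al}} v^i) + w_k \tp (f_{\al} v^i),
\]
using the formula $\Delta(f_\al)=f_\al\tp q^{-h_\al}+1\tp f_\al$ recalled in Section 3. Since $\hat M_\la$ is a weight module and $v^i$ lies in some weight space $[\hat M_\la]_\mu$, the factor $q^{-h_\al}v^i$ reduces to the scalar $q^{-(\al,\mu)}v^i$. Hence the first summand is a non-zero scalar multiple of $(f_\al w_k)\tp v^i$ whenever $f_\al w_k\ne 0$, and is zero otherwise.

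Next I would read the two cases of the statement off this identity. If $f_\al\notin \mathrm{Arr}(w_k)$, then by the convention adopted for the vertical diagrams of $\C^N$ this means exactly $f_\al w_k=0$, so the first summand disappears and $f_\al(w_k\tp v^i)=w_k\tp(f_\al v^i)$, which is in particular $\simeq w_k\tp(f_\al v^i)$. If instead $f_\al\in \mathrm{Arr}(w_k)$, the first summand lies in $\C\,(f_\al w_k)\tp v^i$, so modulo this line $f_\al(w_k\tp v^i)\equiv w_k\tp(f_\al v^i)$; and here one must recall that the horizontal arrow labelled $f_\al$ from $v^i$ is itself a prescribed isomorphism of lines only up to scalar, so the $\simeq$ (not $=$) in the conclusion is exactly what we get.

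There is no genuine obstacle here: the statement is essentially a bookkeeping lemma that translates the twisted Leibniz rule into the diagrammatic language set up in this section. The only thing to check with some care is that the tacit convention "horizontal/vertical arrows stand for the action of $f_\al$ up to a non-zero scalar" is being applied consistently on both sides of $\simeq$, which is why the output is phrased with $\simeq$ rather than an equality and why the correction term, when present, is permitted to live in the one-dimensional subspace $\C\, f_\al w_k\tp v^i$.
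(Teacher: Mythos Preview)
Your proposal is correct and follows the same approach as the paper's own proof, which simply appeals to the quasi-primitivity of the Chevalley generators (the coproduct formula from Section~3) and the equivalence $f_\al\notin\mathrm{Arr}(w_k)\Leftrightarrow f_\al w_k=0$. You have merely made the computation explicit where the paper leaves it at a one-line remark.
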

\begin{proof}
This statement follows from the definition of diagram and quasi-primitivity of
the Chevalley generators (cf. the comultiplication in Section \ref{QOG}). In particular,
$f_\al\not \in \mathrm{Arr}(w_k)$  if and only if $f_\al w_k=0$, hence the first alternative. The second alternative is also an immediate consequence of quasi-primitivity of
$f_\al$.
\end{proof}
\begin{corollary}
\label{interval}
Suppose that  $f_\al\in \mathrm{Arr}(v^i) - \mathrm{Arr}(w_k)$ for some $i$ and $k$. Suppose that
 for $k_1\leqslant j\leqslant k$
the nodes $w_j\tp v^i$ lie in a submodule $\hat M\subset \C^N\tp \hat M_\la$
 and all arrows from $\mathrm{Arr}(w_j)$ have length $1$.
Then  $\Span \{w_j\tp f_\al v^i\}_{j=k_1}^k\subset \hat M$.
\end{corollary}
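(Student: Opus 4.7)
The plan is to establish $w_j\tp f_\al v^i\in \hat M$ for each $j\in[k_1,k]$ by applying $f_\al$ directly to the known element $w_j\tp v^i\in \hat M$ and exploiting the quasi-primitivity of $f_\al$, rather than by propagating inductively down the column.

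Since $v^i$ is a weight vector, the coproduct $\Delta(f_\al)=f_\al\tp q^{-h_\al}+1\tp f_\al$ gives
\[
f_\al(w_j\tp v^i)\;=\;q^{-(\al,\wt v^i)}(f_\al w_j)\tp v^i \;+\; w_j\tp f_\al v^i,
\]
and the left-hand side belongs to $\hat M$ because $\hat M$ is a $U_q(\g)$-submodule. Two cases arise according to whether $f_\al w_j$ vanishes. If $f_\al w_j=0$, then $w_j\tp f_\al v^i$ equals $f_\al(w_j\tp v^i)\in \hat M$ and we are done; this branch covers in particular the case $j=k$, thanks to the hypothesis $f_\al\notin \mathrm{Arr}(w_k)$. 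Otherwise $f_\al\in\mathrm{Arr}(w_j)$, and the length-$1$ assumption forces $f_\al w_j$ to be a nonzero scalar multiple of the adjacent node $w_{j+1}$. Rearranging the displayed identity then exhibits $w_j\tp f_\al v^i$ as a linear combination of $f_\al(w_j\tp v^i)$ and $w_{j+1}\tp v^i$, both of which lie in $\hat M$ provided $j+1$ still belongs to $[k_1,k]$.

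The only delicate point — and the reason behind both hypotheses — is confirming that $j+1$ does not escape the range. The upper bound is secured because the second branch forces $j\neq k$ (as $f_\al w_k=0$ by assumption), hence $j<k$ and $j+1\leqslant k$; the lower bound $j+1\geqslant k_1$ is immediate from $j\geqslant k_1$. Thus the length-$1$ hypothesis confines the image of $f_\al w_j$ to the next node in the vertical chain, while $f_\al\notin \mathrm{Arr}(w_k)$ forbids escape past the upper endpoint, and the conclusion follows with no further structural input.
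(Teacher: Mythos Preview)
Your proof is correct and follows essentially the same route as the paper: apply the coproduct formula for $f_\al$ to $w_j\tp v^i\in\hat M$, and use the length-$1$ hypothesis together with $f_\al\notin\mathrm{Arr}(w_k)$ to absorb the extra term $(f_\al w_j)\tp v^i$ into $\hat M$. The paper phrases this more tersely as a span identity $\Span\{f_\al(w_j\tp v^i)\}=\Span\{w_j\tp f_\al v^i\}$ modulo $\Span\{w_j\tp v^i\}\subset\hat M$, but the content is the same.
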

\begin{proof}
We have $f_\al(w_k\tp v^i)\simeq w_k\tp f_\al v^i$, as $f_\al w_k=0$. Therefore,
$\Span\{f_\al(w_j\tp v^i)\}_{j=k_1}^k=\Span\{w_j\tp f_\al v^i\}_{j=k_1}^k$ modulo $\Span\{w_j\tp v^i\}_{j=k_1}^k\subset \hat M$.
Now the proof is immediate.
\end{proof}

Suppose there are intervals of vertical nodes $(w_k)$, $k\in I_v=[k_1,k_2]$, and horizontal nodes $(v^i)$, $i\in I_h=[i_1,i_2]$,
such that: {\em all} vertical arrows directed from $w_k$, $k\in I_v'=[k_1,k_2-1]$ are  of length $1$;
for each $i\in I_h'=[i_1,i_2-1]$  {\em there is} a horizontal arrow of length $1$. In particular,  $\mathrm{Arr}(w_k)$ consists of one element for all $k\in I_v'$.
 Let us  denote the subset of
these selected horizontal arrows by $A_h$. Consider the subgraph with nodes $(w_k\tp v^i)$, $(k,i)\in I_v\times I_h$,
the vertical arrows from  $\mathrm{Arr}(w_k)$, $k\in I_v'$, and the horizontal arrows from the selected subset $A_h$.
 We call such a subgraph {\em simple rectangle}. In particular, the entire diagram may be simple.
 Its  horizontal and vertical "tensor factor" subgraphs are topologically  simply connected, having no cycle or branching.

The diagrams of interest will be specified in the next section. Here we establish a general fact, which will be used
in what follows.
\begin{lemma}
\label{triangle}
Suppose that a diagram $D$ contains an equilateral rectangular triangle $T$ (leveled by top and right edges) belonging to a simple
rectangle in $D$. Suppose that
the right edge of $T$ belongs to a submodule $\hat M\subset \C^N\tp \hat M_\la$. Then the entire triangle $T$ belongs
to $\hat M$.
\end{lemma}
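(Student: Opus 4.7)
The plan is to argue by induction on the common length $s$ of the two legs of $T$. For $s=0$, $T$ consists of a single node lying on the right edge, so the claim is immediate. For $s\geqslant 1$, write the right edge of $T$ as $\{w_k\tp v^{i_1}\}_{k=k_1}^{k_1+s}\subset\hat M$. I aim to show that the column $\{w_k\tp v^{i_1+1}\}_{k=k_1}^{k_1+s-1}$ also lies in $\hat M$. This column is precisely the right edge of the equilateral rectangular triangle $T'$ of size $s-1$ obtained from $T$ by dropping its rightmost column; since $T'$ sits inside the simple sub-rectangle indexed by $[k_1,k_1+s-1]\times[i_1+1,i_2]$, the inductive hypothesis delivers $T'\subset\hat M$, and appending the original right edge recovers all of $T$.

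The leftward propagation step is essentially Corollary \ref{interval}. Let $f_\al\in A_h$ be the horizontal arrow joining $v^{i_1}$ and $v^{i_1+1}$, which exists by the simple rectangle hypothesis. Applying $\Delta(f_\al)=f_\al\tp q^{-h_\al}+1\tp f_\al$ to $w_k\tp v^{i_1}$ and using that all vertical arrows from interior rows of the rectangle have length $1$ (and that $\mathrm{Arr}(w_k)$ consists of at most one element for $k\in I_v'$), I obtain, up to rescaling,
$$
f_\al(w_k\tp v^{i_1}) = c_k\,w_{k+1}\tp v^{i_1} + w_k\tp f_\al v^{i_1},
$$
with $c_k\neq 0$ only when $f_\al\in\mathrm{Arr}(w_k)$. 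For each $k\in[k_1,k_1+s-1]$ both $w_k\tp v^{i_1}$ and $w_{k+1}\tp v^{i_1}$ belong to $\hat M$, and $U_q(\g)$-stability of $\hat M$ forces $w_k\tp f_\al v^{i_1}\simeq w_k\tp v^{i_1+1}$ to lie in $\hat M$ as well, completing the inductive step.

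The main obstacle is the bottom-right corner $w_{k_1+s}\tp v^{i_1}$: there the spillage term in the coproduct is proportional to $w_{k_1+s+1}\tp v^{i_1}$, which lies outside the right edge of $T$ and is not assumed to belong to $\hat M$. Consequently the node $w_{k_1+s}\tp v^{i_1+1}$ cannot be captured by this argument. However, it is precisely the node excluded from $T'$ by the isoceles right-triangle shape, so the size-by-one shrinkage per column in the induction dovetails exactly with the geometry of $T$. This confirms that the equilateral hypothesis is essential: it matches the reach of Corollary \ref{interval} column by column, and no step of the induction falls short.
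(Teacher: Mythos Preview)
Your proof is correct and follows essentially the same approach as the paper's: both propagate column by column using the horizontal arrow $f_\al$ and the coproduct, with the spillage term $w_{k+1}\tp v^{i}$ absorbed into the part already known to lie in $\hat M$. The only cosmetic difference is that the paper inducts on the column index $i$ within a fixed triangle, whereas you induct on the triangle size $s$ and peel off one column at a time; your explicit discussion of why the bottom-corner node is lost (and why this exactly matches the triangular shape) makes the geometry more transparent than in the paper's terse version.
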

\begin{proof}
Without loss of generality we may assume that $D$ is simple and $T$ sits in the north-east corner of $D$,
i.e. $w_1\tp v^1\in \hat M$ is its maximal node.
Suppose that its edge contains $t$ nodes.
We do induction on column's number $i$, which is illustrated below (on the left).
\begin{center}
\begin{picture}(100,100)
\put(72,100){$f$}\put(59,87){$\scriptstyle{i+1}$}\put(79,87){$\scriptstyle{i}$}
\put(100,0 ){\line(0,1){85}}\put(15,85){\line(1,0){85}}
\thinlines\put(15,85){\line(1,-1){85}}
\put(80,85){\line(0,-1){65}}\put(65,85){\line(0,-1){50}}
\thicklines\put(80,35){\vector(0,-1){15}}\thicklines\put(80,35){\vector(-1,0){15}}
\thicklines\put(80,96){\vector(-1,0){15}}
\end{picture}
\hspace{0.5in}
\begin{picture}(100,110)
\put(72,100){$f$}\put(59,87){$\scriptstyle{i+1}$}\put(79,87){$\scriptstyle{i}$}
\put(100,0 ){\line(0,1){85}}\put(40,85){\line(1,0){60}}\put(40,85){\line(0,-1){25}}
\thinlines\put(40,60){\line(1,-1){60}}
\put(80,85){\line(0,-1){65}}\put(65,85){\line(0,-1){50}}
\thicklines\put(80,35){\vector(0,-1){15}}\thicklines\put(80,35){\vector(-1,0){15}}
\thicklines\put(80,96){\vector(-1,0){15}}
\end{picture}
\end{center}
By the hypothesis, the column $\{w_k\tp v^1\}_{k=1}^t$ lies in $\hat M$.
Suppose that the statement is proved for $1\leqslant i< t$. Let $f\in U_q(\g)$ be the operator assigned
to the horizontal arrow $v^{i+1}\leftarrow v^i$.
Each node $w_k\tp v^i$, $k=1,\ldots, t-i,$
is sent by $f$ to $w_k\tp v^{i+1}$ possibly modulo $\C w_{k+1}\tp v^{i}$,  which belongs to $\hat M$
by the induction assumption. Therefore, the $i+1$-st column of $T$ does belong to $\hat M$.
\end{proof}
Remark that the triangle can be replaced with a trapezoid obtained by cutting off the left end of $T$ with vertical line, as shown
on the right.

The sequence  $(v^i)$ is assumed to be finite and contain the unique minimal node (no outward arrows).
This node is in the focus of our interest. By construction, it will carry the
weight $\la-\dt$, and the whole diagram yield a path (paths) to it from the maximal node $v_\la$.
This way, we associate a diagram with every non-vanishing
Chevalley monomial in $[\hat M_\la]_{\la-\dt}$ participating in the singular vector $v_{\la-\dt}$.

\subsection{Series $B$, symmetric case $\k=\s\o(4)\oplus \s\o(2n-3)$}
Suppose first that $m=2$. Later on we drop this restriction.
Given a permutation $s$ of $1,\ldots, n$, we define Chevalley monomials $v_s^{i}\in \hat M_\la$, $i=1,\ldots ,2n$, through the graph
\begin{center}
\begin{picture}(390,30)
\put(50,15){\circle{3}}
\put(100,15){$\ldots$}
\put(85,15){\vector(-1,0){30}}
\put(45,0){$v_s^{2n}$}
\put(160,15){\circle{3}}
\put(155,15){\vector(-1,0){30}}
\put(150,0){$v_s^{n+1}$}
\put(195,15){\vector(-1,0){30}}
\put(200,15){\circle{3}}
\put(195,0){$v_s^{n}$}
\put(235,15){\vector(-1,0){30}}
\put(240,15){$\ldots$}
\put(290,15){\vector(-1,0){30}}
\put(295,15){\circle{3}}
\put(330,15){\vector(-1,0){30}}
\put(335,15){\circle{3}}
\put(320,0){$v_s^{1}=v_\la$}

\put(60,22){$f_{\al_{s(1)}}$}
\put(128,22){$f_{\al_{s(n-1)}}$}
\put(170,22){$f_{\al_{s(n)}}$}
\put(215,22){$f_{\al_{n}}$}
\put(270,22){$f_{\al_{3}}$}
\put(310,22){$f_{\al_{2}}$}
\end{picture}
\end{center}
In particular, the minimal node of the graph is $v^{2n}_s=f_{\al_{s(1)}}\ldots f_{\al_{s(n)}}f_{\al_n} \ldots f_{\al_2}v_\la\in [\hat M_{\la}]_{\la-\dt}$.
For $s=\id$ we omit the subscript $s$ and denote $v^{i}_s$ simply by $v^{i}$.

With a permutation $s$ of $1,\ldots, n$  such that $v_s^{2n}\not =0$ we associate a diagram $D_s$, as explained in the preceding
section. Essential
are the nodes $\{w_k\tp v^i\}$ with $i+k\leqslant 2n+1$, so we display only this triangular part of $D_s$:
\begin{center}
\begin{picture}(350,180)
\put(170,20){$D_s$}
\put(5,160){$\scriptstyle{w_1\tp v_s^{2n}}$}
\put(120,160){$\scriptstyle{w_1\tp v_s^{n+1}}$}\put(185,160){$\scriptstyle{w_1\tp v^{n}}$}
\put(300,160){$\scriptstyle{w_1\tp v^1}$}

\put(65,162 ){\vector(-1,0){30}}\put(70,162 ){$\ldots$}\put(118,162 ){\vector(-1,0){30}}
\put(180,161 ){\vector(-1,0){30}}
\put(240,161 ){\vector(-1,0){30}}\put(242,161 ){$\ldots$}\put(290,161 ){\vector(-1,0){30}}

\put(133,157 ){\vector(0,-1){18}}\put(198,157){\vector(0,-1){18}}\put(315,157 ){\vector(0,-1){18}}
\put(131,125 ){$\vdots$}\put(196,125 ){$\vdots$}\put(313,125 ){$\vdots$}

\put(85,125){\circle{1}}\put(75,130){\circle{1}}\put(65,135){\circle{1}}

\put(133,122 ){\vector(0,-1){18}}

\put(198,122 ){\vector(0,-1){18}}\put(315,122 ){\vector(0,-1){18}}
\put(180,100 ){\vector(-1,0){30}}\put(240,100 ){\vector(-1,0){30}}\put(242,100 ){$\ldots$}\put(290,100 ){\vector(-1,0){30}}
\put(118,98){$\scriptstyle{w_{n}\tp v_s^{n+1}}$}\put(183,98){$\scriptstyle{w_{n}\tp v^{n}}$}\put(300,98){$\scriptstyle{w_{n}\tp v^1}$}

\put(198,95 ){\vector(0,-1){18}}\put(315,95 ){\vector(0,-1){18}}
\put(175,70){$\scriptstyle{w_{n+1}\tp v^{n}}$}\put(291,70){$\scriptstyle{w_{n+1}\tp v^1}$}
\put(240,71 ){\vector(-1,0){30}}\put(242,71 ){$\ldots$}\put(290,71 ){\vector(-1,0){30}}

\put(315,68 ){\vector(0,-1){18}}
\put(313,39 ){$\vdots$}
\put(315,37 ){\vector(0,-1){18}}
\put(297,10){$\scriptstyle{w_{2n}\tp v^1}$}

\put(43,169){$\scriptstyle{f_{\al_{s(1)}}}$}\put(91,169){$\scriptstyle{f_{\al_{s(n-1)}}}$}
\put(157, 169){$\scriptstyle{f_{\al_{s(n)}}}$}\put(220,169){$\scriptstyle{f_{\al_n}}$}\put(270,169){$\scriptstyle{f_{\al_2}}$}

\put(320,27){$\scriptstyle{f_{\al_{2} }}$}\put(320,88){$\scriptstyle{f_{\al_{n}}}$}\put(320,58){$\scriptstyle{f_{\al_{n}}}$}
\put(320,118){$\scriptstyle{f_{\al_{n-1}}}$}\put(320,148){$\scriptstyle{f_{\al_1}}$}

\put(260,38){\circle{1}}\put(250,43){\circle{1}}\put(240,48){\circle{1}}

\end{picture}
\end{center}
Note that the first $n$ columns (counting from the right) in all $D_s$ are the same.

Denote by $D_\id'\subset D_\id$  the sub-graph above the principal diagonal, i.e. consisting of nodes
$\{w_k\tp v^j\}$ such that $k+j\leqslant 2n$.
Given $s\not =\id$ let $i$ be the highest of $1,\ldots,n$ displaced by $s$, i.e. $s(i)\not=i$.
Denote  by $D_s'\subset D_s$  the trapezoid of nodes  $\{w_k\tp v^j_s\}$ obeying
 $k+j\leqslant 2n+1$, $k\leqslant i$.
\begin{center}
\begin{picture}(420,100)
\put(190,0){$D_s',\quad s\not= \id$}
\put(0,80){$\scriptstyle{w_1\tp v_s^{2n}}$}\put(55,80){\vector(-1,0){30}}\put(57,80){\ldots}
\put(102,80){\vector(-1,0){30}}
\put(103,80){$\scriptstyle{w_1\tp v_s^{2n-i+1}}$}
\put(158,80){\vector(-1,0){30}}
\put(160,80){$\scriptstyle{w_1\tp v_s^{2n-i}}$}
\put(225,80){\vector(-1,0){30}}\put(227,80){\ldots}\put(272,80){\vector(-1,0){30}}
\put(275,80){$\scriptstyle{w_1\tp v_s^{n}}$}
\put(335,80){\vector(-1,0){30}}\put(337,80){\ldots}\put(382,80){\vector(-1,0){30}}
\put(390,80){$\scriptstyle{w_1\tp v_\la}$}
\put(403,78 ){\vector(0,-1){20}}
\put(401,47 ){$\vdots$}
\put(403,45 ){\vector(0,-1){20}}

\put(288,78 ){\vector(0,-1){20}}
\put(286,47 ){$\vdots$}
\put(288,45 ){\vector(0,-1){20}}

\put(173,78 ){\vector(0,-1){20}}
\put(171,47 ){$\vdots$}
\put(173,45 ){\vector(0,-1){20}}

\put(115,78 ){\vector(0,-1){20}}
\put(113,47 ){$\vdots$}
\put(115,45 ){\vector(0,-1){20}}

\put(75,45 ){\circle{1}}\put(65,50 ){\circle{1}}\put(55,55 ){\circle{1}}

\put(160,18){$\scriptstyle{w_i\tp v_s^{2n-i}}$}
\put(103,18){$\scriptstyle{w_i\tp v_s^{2n-i+1}}$}
\put(158,20){\vector(-1,0){30}}
\put(225,20){\vector(-1,0){30}}\put(227,20){\ldots}\put(272,20){\vector(-1,0){30}}
\put(275,18){$\scriptstyle{w_i\tp v_s^{n}}$}
\put(335,20){\vector(-1,0){30}}\put(337,20){\ldots}\put(382,20){\vector(-1,0){30}}
\put(390,18){$\scriptstyle{w_i\tp v_\la}$}
\put(32,90){$\scriptstyle{f_{\al_{s(1)}}}$}\put(135,93){$\scriptstyle{f_{\al_{s(i)}}}$}
\put(204,90){$\scriptstyle{f_{\al_{i+1}}}$}\put(255,90){$\scriptstyle{f_{\al_{n}}}$}\put(315,90){$\scriptstyle{f_{\al_{n}}}$}
\put(363,90){$\scriptstyle{f_{\al_{2}}}$}
\put(407,68){$\scriptstyle{f_{\al_{1}}}$}
\put(407,32){$\scriptstyle{f_{\al_{i-1}}}$}
\end{picture}
\end{center}

Let $\hat M$ denote the $U_q(\g)$-submodule $\hat M_1+\hat M_2\subset \C^N\tp \hat M_\la$.
\begin{lemma}
Suppose that $q^{3-2p}\not =-1$. Then $D_s'$ lies in $\hat M$.
\label{sigma=id}
\end{lemma}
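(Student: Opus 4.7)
The plan is to establish $D_s'\subset \hat M$ in two stages. First, I will show that the rightmost column of the trapezoid, namely the vertical segment $\{w_k\tp v_\la\}_{k=1}^i$, lies entirely in $\hat M$. Then I will apply the trapezoid variant of Lemma \ref{triangle} (the remark immediately following its proof) to propagate this inclusion column by column across the entire trapezoid.

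For the first stage I would argue entry by entry down the right column. The top entry $w_1\tp v_\la$ is the canonical generator $u_{\nu_1}$ of $\hat M_1$. Applying $f_{\al_1}$ gives $w_2\tp v_\la$ up to an invertible scalar, since $f_{\al_1}v_\la=0$ (because $\al_1\in \Pi^+_\l$); hence $w_2\tp v_\la\in \hat M_1$. To reach $w_3\tp v_\la=w_{m+1}\tp v_\la$ I would invoke Lemma \ref{gl-sym}, which says $u_{\nu_2}\in \hat M_2$ equals $q^{-m}\frac{q^{(\al,\la)+m}-q^{-(\al,\la)-m}}{q-q^{-1}}\, w_{m+1}\tp v_\la$ modulo $\hat M_1$. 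For $\la\in \mathfrak{C}^*_\k$ and $m=2$, the defining weight constraint $q^{2(\al_2,\la)}=-q^{-P}=-q^{-(2p+1)}$ turns the non-vanishing of this coefficient into $q^{2(\al_2,\la)+4}\neq 1$, i.e.\ $-q^{3-2p}\neq 1$, which is exactly the hypothesis $q^{3-2p}\neq -1$. Therefore $w_3\tp v_\la\in \hat M$. For $k=4,\ldots,i$, vertical descent along $f_{\al_3}, f_{\al_4}, \ldots, f_{\al_{i-1}}$ proceeds without any correction term, since each $f_{\al_j}$ with $j\geqslant 3$ annihilates $v_\la$; hence $w_k\tp v_\la\in \hat M$ for every $k\leqslant i$.

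For the second stage, I would verify that $D_s'$ is contained in a simple rectangle: in the $B$-series every vertical arrow of the natural representation has length one, and each horizontal arrow of the graph defining $(v_s^j)$ is a single Chevalley generator. The inductive step of Lemma \ref{triangle} then applies uniformly: given that column $j$ lies in $\hat M$, applying the horizontal Chevalley generator labelling $v_s^j\leftarrow v_s^{j+1}$ sends each node $w_k\tp v_s^j$ to $w_k\tp v_s^{j+1}$ modulo the vertical shift $w_{k+1}\tp v_s^j$, which is already controlled inside the current column by Corollary \ref{interval}. The shortening of the columns along the upper-left hypotenuse of $D_s'$ is precisely the configuration covered by the trapezoid remark following Lemma \ref{triangle}.

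The substantive obstacle is the single coefficient computation in Lemma \ref{gl-sym}; the way the substitution of the weight condition produces exactly $-q^{3-2p}$ is what makes the lemma's hypothesis sharp. Once the base step at $w_3\tp v_\la$ is granted, the remainder --- the in-column descent from $w_3$ down to $w_i$, and the horizontal propagation from right to left --- is routine manipulation within the diagram formalism already developed, using only quasi-primitivity of the Chevalley generators and the fact that they kill $v_\la$ outside the $f_{\al_2}$-direction.
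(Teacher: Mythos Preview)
Your first stage (the right column) is fine and in fact more explicit than the paper, but the second stage has a real gap. The trapezoid described in the remark after Lemma~\ref{triangle} is a triangle with its left tip cut off by a vertical line: its columns shrink by one as you move left from the full-height right edge, so the correction term $w_{k+1}\tp v_s^{j}$ always lies in the source column. By contrast, $D_s'$ for $s\neq\id$ begins with a long \emph{rectangular} block (columns $1$ through $2n-i+1$, all of constant height~$i$) before any tapering occurs. Inside that rectangle, the transition from column $i-1$ to column $i$ uses the horizontal arrow $f_{\al_i}$, which coincides with the vertical arrow at the bottom row $w_i$; to extract $w_i\tp v_s^{i}$ you would need $w_{i+1}\tp v_s^{i-1}\in\hat M$, and this node sits at row $i+1$, outside your trapezoid. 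Corollary~\ref{interval} is of no help here precisely because its hypothesis $f_\al\notin\mathrm{Arr}(w_i)$ fails.

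You also do not treat $s=\id$ at all; there $D'_\id$ is the full triangle with right column reaching down to $w_{2n-1}\tp v_\la$, and no trapezoid bound $i$ is in play. The paper resolves both issues at once: it first proves $D'_\id\subset\hat M$ by extending your right-column argument all the way down (each vertical step except $f_{\al_2}$ kills $v_\la$, and the $f_{\al_2}$ step is handled by Lemma~\ref{gl-sym}) and then invoking Lemma~\ref{triangle} on the full triangle. For $s\neq\id$ the rectangular block of $D_s'$ coincides, node for node, with a piece of $D'_\id$ (since $v_s^{j}=v^{j}$ for $j\leqslant 2n-i$), so it is already in $\hat M$. The passage to column $2n-i+1$ then exploits the defining property $s(i)\neq i$: the horizontal arrow $f_{\al_{s(i)}}$ differs from $f_{\al_i}$, so Corollary~\ref{interval} does apply and yields the full column of height $i$. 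The leftover small triangle is finished by Lemma~\ref{triangle}. Your direct propagation from a height-$i$ right column cannot substitute for this two-step reduction.
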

\begin{proof}
First suppose that $s=\id$.
The statement is trivial for the node $u_{\nu_1}=w_1\tp v_\la$ generating $\hat M_1$.
By Lemma \ref{gl-sym}, $u_{\nu_2}\simeq w_2\tp v_\la \!\!\mod\hat M_1$, under the assumption $q^{3-2p}\not =-1$. Hence the $w_2\tp v_\la$
belongs to $\hat M$ too. Further observe that the rightmost column of $D_\id'$ lies in $\hat M$.
For this part of $D_\id'$, the vertical arrows actually depict the action on the whole tensor square, as they kill the factor $v_\la$.
Notice that  $D_s$ is simple. One is left to apply Lemma \ref{triangle} to the triangle $T=D'_\id$.

Now we consider the case $s\not =\id$ and let $i$ be the highest integer displaced by $s$.
Observe that the right rectangular part of $D'_\id$ up to column $2n-i$ is
the same as in $D_\id$ and belongs to $D_\id'$. Hence it lies in  $\hat M$, as already proved.
Since $s(i)\not =i$, the horizontal arrow $f_{\al_{s(i)}}\in \mathrm{Arr}(v^{2n-i}_s)$ is distinct from
vertical $f_{\al_{i}}$ constituting $ \mathrm{Arr}(w_i)$ (suppressed in the graph).
By Corollary \ref{interval}, column $2n-i+1$ of $D_\id'$ belongs to  $\hat M$.
The remaining part of $D'_s$ is a triangle bounded on the right
with column $2n-i+1$. By Lemma \ref{triangle}, it belongs to $\hat M$.
\end{proof}

Now we are ready to prove the following
\begin{propn}
\label{dir_sum_symB}
Suppose that $q^{2m-2p-1}\not =-1$. Then the tensor product $\C^N\tp M_\la$ splits into the direct sum $M_1\oplus M_2$.
\end{propn}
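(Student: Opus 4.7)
The plan is to compare the generic decomposition $\C^N\tp\hat M_\la=\hat M_1\oplus\hat M_2\oplus\hat M_3$ with the short exact sequence $0\to K_\pi\to \C^N\tp\hat M_\la\to \C^N\tp M_\la\to 0$, where $K_\pi:=\C^N\tp\hat M_{\la-\dt}$ is the kernel of the quotient. Setting $\hat M:=\hat M_1+\hat M_2$, the identity $\C^N\tp M_\la=M_1\oplus M_2$ breaks into a \emph{containment} assertion $\hat M_3\subset \hat M+K_\pi$ (equivalently, the singular generator $u_{\nu_3}$ of $\hat M_3$ can be written as $h+k$ with $h\in\hat M$, $k\in K_\pi$) and a \emph{directness} assertion $M_1\cap M_2=0$ inside $\C^N\tp M_\la$.

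For the containment, I would begin by writing $u_{\nu_3}$ explicitly, in parallel with the formula for $u_{\nu_2}$, by applying the $\g\l(m)$-lowering operators to $w_{N+1-m}\tp v_\la$ in the conatural $\g\l(m)$-block of $\C^N$. The key coincidence is that $\la-\ve_m=\la-\dt+\ve_1$ (since $m=2$ and $\dt=\ve_1+\ve_2$), so that the weight space of $u_{\nu_3}$ contains $w_1\tp v_{\la-\dt}\in K_\pi$. Lemma~\ref{sigma=id} has already placed every node of every diagram $D'_s$ inside $\hat M$ under the hypothesis $q^{3-2p}\neq -1$. Using the Serre manipulations developed in Lemmas~\ref{omega}--\ref{y_zeros}, I then expand $v_{\la-\dt}$ (a combination of the $x_i$) in the Chevalley monomial basis $\{v_s^{2n}\}$ and deduce that, modulo $\hat M$, the vector $u_{\nu_3}$ is proportional to $w_1\tp v_{\la-\dt}$, with a scalar of the same shape as the one appearing in Lemma~\ref{gl-sym}. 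That scalar vanishes precisely when $q^{2m-2p-1}=-1$ and is invertible under the standing hypothesis, whence $u_{\nu_3}\in\hat M+K_\pi$.

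For the directness, I would proceed by a weight-multiplicity count. Freeness of $\hat M_\la$ and $\hat M_{\la-\dt}$ over $\C[\![\hbar]\!]$, together with the $U_q(\g_-)$-module isomorphism $\hat M_\la\simeq U_q(\g_-)/\sum_{\al\in\Pi^+_\l} U_q(\g_-) f_\al$, yields the formal character identity $\mathrm{ch}(M_\la)=\mathrm{ch}(\hat M_\la)-\mathrm{ch}(\hat M_{\la-\dt})$, together with analogous expressions for $M_i$ in terms of $\hat M_i\cap K_\pi$. Comparing the character of $M_1+M_2$ (equal to $\C^N\tp M_\la$ by the containment step) weight by weight against $\mathrm{ch}(\C^N)\cdot \mathrm{ch}(M_\la)$ forces $M_1\cap M_2$ to have trivial character for generic $\la\in \mathfrak{C}^*_{\k,reg}$; by density in $\mathfrak{C}^*_\k$, the directness persists for all admissible $\la$.

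The principal obstacle I expect is the containment step: the explicit rewriting of $v_{\la-\dt}$ in the canonical monomial basis $\{v_s^{2n}\}$ and the tracking of scalars through the Serre manipulations, needed to isolate the proportionality constant whose non-vanishing is controlled precisely by $q^{2m-2p-1}\neq -1$. Once this computation is in place, the directness step reduces to essentially a character bookkeeping.
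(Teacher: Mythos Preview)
Your containment strategy is close to the paper's in spirit, but the paper avoids computing $u_{\nu_3}$ altogether. Instead of showing $u_{\nu_3}\in\hat M+K_\pi$, the paper shows directly that $\C^N\tp v_\la\subset M$ by walking down the chain $w_1\tp v_\la,\ldots,w_N\tp v_\la$. The only nontrivial step (for $m=2$) is reaching $w_{2n}\tp v_\la$: the diagram argument shows that all nodes on the principal diagonal of $D_{\id}$ are equivalent modulo $\hat M$, whence $w_{2n}\tp v_\la\simeq w_1\tp v^{2n}\bmod\hat M$; since every $x_i$ with $i>2$ is a combination of $v_s^{2n}$ with $s\neq\id$ (all in $\hat M$ by Lemma~\ref{sigma=id}), one gets $w_{2n}\tp v_\la\simeq w_1\tp v_{\la-\dt}\bmod\hat M$, which lies in $K_\pi$. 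Note also where the hypothesis actually enters: $q^{2m-2p-1}\neq -1$ is exactly the nonvanishing of the scalar in Lemma~\ref{gl-sym}, needed to place $w_{m+1}\tp v_\la$ in $\hat M$ and thus to \emph{start} the diagram argument. It does not appear as a separate proportionality constant at the end, and you do not need to track scalars through Serre manipulations. Your plan also tacitly assumes the direct sum $\C^N\tp\hat M_\la=\hat M_1\oplus\hat M_2\oplus\hat M_3$ at the special $\la\in\mathfrak{C}^*_\k$; the paper only claims this for generic $\la$ and works throughout with $\hat M=\hat M_1+\hat M_2$, which is defined unconditionally via the explicit singular vectors $u_{\nu_1},u_{\nu_2}$.

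Your directness argument has a genuine gap. To turn the character identity into $M_1\cap M_2=0$ you would need $\mathrm{ch}(M_1)+\mathrm{ch}(M_2)=\mathrm{ch}(\C^N)\cdot\mathrm{ch}(M_\la)$, hence you need $\mathrm{ch}(M_i)$ individually, i.e.\ you need to compute $\hat M_i\cap(\C^N\tp\hat M_{\la-\dt})$. This is not bookkeeping: it is a nontrivial submodule computation that the machinery at hand does not supply. The paper bypasses this entirely by invoking the invariant operator $\Q$ of (\ref{Q-matrix}): it acts on $\hat M_1$ and $\hat M_2$ by the scalars $-q^{-2p-1}$ and $q^{-2m}$ (cf.\ (\ref{-1+1-1})), hence by the same scalars on their images $M_1$, $M_2$. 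The hypothesis $q^{2m-2p-1}\neq -1$ says precisely that these eigenvalues differ, so $M_1\cap M_2=0$ immediately. This is both simpler and is the second place where the hypothesis is used.
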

\begin{proof}
We will use an operator $\Q\in \End(\C^N\tp \hat M_\la)$ defined by (\ref{Q-matrix}). Here we need to know
that $\Q$ is $U_\hbar(\g)$-invariant and turns scalar on $\hat M_1$ and $\hat M_2$ with the eigenvalues,  respectively, $\mu_1=-q^{-2p-1}$ and $\mu_2=q^{-2m}$, cf. (\ref{-1+1-1}). All eigenvalues of $\Q$ are calculated in \cite{M2} and presented in (\ref{e_v in hatM_la}). Since $\mu_1\not =\mu_2$, the modules $M_1$ and $M_2$ have zero intersection,
and their sum is direct.

We  prove the statement if we show that $\C^N\tp v_\la\subset M=M_1\oplus M_2$.
We consider the case $m=2$ first. Then $w_{2n}$ is the highest weight vector of the $\l$-submodule in $\C^N$ of weight $-\ve_2$.
Our strategy is to show that the vertical scheme representing the $U_q(\b^-)$-action on $\C^N$
yields the action on $\C^N\tp v_\la$ modulo $M$. Starting from $w_{1}\tp v_\la\in M$ we obtain all
$w_{i}\tp v_\la \mod M$ by applying the Chevalley generators. The hardest part of the job  is
the transition from $w_{2n-1}\tp v_\la$ to $w_{2n}\tp v_\la$.

By Lemma \ref{sigma=id}, the diagonal of $D_\id$ right over the principal diagonal lies in $\hat M$. Notice that
the vertical and horizontal arrows applied to all nodes in this diagonal coincide. Therefore, up to a non-zero scalar
factor,  the elements on the principal diagonal are all equivalent modulo $\hat M$. For instance, apply $f_{\al_1}$ to
$w_1\tp v^{2n-1}\in \hat M$ and get $q^{2}w_2\tp v^{2n-1}+w_1\tp v^{2n}\in \hat M$,
hence $w_1\tp v^{2n}= -q^{2}w_2\tp v^{2n-1}\!\!\mod\hat M$. Moving further down the
principal diagonal, we find $w_1\tp v^{2n}\simeq w_{2n}\tp v_\la \!\!\mod \hat M$.
Now notice that $x_{2}=v^{2n}-av^{2n}_s$, where $s$ is the transposition $(1,2)$.
Observe that all other $x_{i}$ are linear combinations
of $v^{2n}_s$ for certain $s \not=\id$. Since $v^{2n}_s \in \hat M$ once $s\not =\id$, by Lemma \ref{sigma=id},
$w_1\tp v_{\la-\dt}\simeq w_{2n}\tp v_\la$ modulo $\hat M$.
Hence $w_{2n}\tp v_\la \in M$. Applying $f_{\al_1}$ to $w_{2n}\tp v_\la$ we get $w_{2n+1}\tp v_\la\in M$, up to a scalar factor.

We have proved the inclusion $\C^N\tp v_\la\subset  M$ under the assumption $m=2$. Now we drop this restriction. First of
all, $w_1\tp v_\la\in M$ and $w_i\tp v_\la=f_{\al_{i-1}}(w_{i-1}\tp v_\la)\in M $ for $i=2,\ldots, m$.
The transition from $w_m\tp v_\la$ to $w_{m+1}\tp v_\la$ is facilitated by Lemma \ref{gl-sym}
and is similar to the case $m=2$. Namely,
$w_{m+1}\tp v_\la \simeq u_{\nu_2}\in M_2\subset M$ modulo $M$ under the assumption $q^{2m-2p-1}\not =-1$.
This is reducing the proof to the case $m=2$. Recall $\g'=\g_{p+2}\subset \g$ defined in Section \ref{module_M_general_k}. Acting by $\{f_\al\}_{\al}$, $\al \in \Pi^+_{\g'}$,
 on $w_{m-1}\tp v_\la$ we proceed as in the $m=2$-case and check that
$w_{i}\tp v_\la\in M$, $i=m+2,\ldots,N-m+2$. Applying
 $\l_-$ to $w_{N+2-m}\tp v_\la\in M$ we get $w_{i}\tp v_\la\in M$ for $i=N-m+3,\ldots,N$,
 as required. The inclusion $\C^N\tp M_\la \subset M$ is proved.
\end{proof}

\subsection{Series $D$, symmetric case $\k=\s\o(4)\oplus\s\o(2n-4)$}
We have to consider two types of diagrams for $\g=\s\o(2n)$ associated with
Chevalley monomials constituting the vectors  $x_{i}$, $i=2,\ldots, n-2$ on the one hand and the
 "tail" vectors $x_{n-1}$ and $x_{n}$, on the other.

Let us start with the first type.
Given a permutation  $s$ of $(1,\ldots, n-2)$ we define vectors
$v_s^i\in \hat M_\la$, $i=1,\ldots, 2n-1$,  through the graph
\begin{center}
\begin{picture}(440,45)
\put(0,0){$v^{2n-1}_s$}
\put(10,15){\circle{3}}
\put(50,15){\circle{3}}
\put(45,15){\vector(-1,0){30}}
\put(85,15){\vector(-1,0){30}}
\put(100,15){$\ldots$}
\put(155,15){\vector(-1,0){30}}
\put(45,0){$v_s^{2n-2}$}
\put(160,15){\circle{3}}
\put(150,0){$v_s^{n+1}$}
\put(195,15){\vector(-1,0){30}}
\put(200,15){\circle{3}}
\put(190,0){$v_s^{n}$}
\qbezier(163,22)(200,48)(237,22) \put(162,21.5){\vector(-2,-1){2}}
\qbezier(203,22)(240,48)(277,22) \put(202,21.5){\vector(-2,-1){2}}
\put(240,15){\circle{3}}
\put(230,0){$v_s^{n-1}$}
\put(275,15){\vector(-1,0){30}}
\put(280,15){\circle{3}}
\put(270,0){$v_s^{n-2}$}
\put(330,15){$\ldots$}
\put(315,15){\vector(-1,0){30}}
\put(385,15){\vector(-1,0){30}}
\put(390,15){\circle{3}}
\put(385,0){$v_s^{2}$}
\put(425,15){\vector(-1,0){30}}
\put(430,15){\circle{3}}
\put(410,0){$v_s^{1}=v_\la$}
\put(25,22){$f_{\al_{s(1)}}$}
\put(65,22){$f_{\al_{s(2)}}$}
\put(128,22){$f_{\al_{{s(n-2)}}}$}
\put(172,22){$f_{\al_{n-1}}$}
\put(195,39){$f_{\al_{n}}$}
\put(252,22){$f_{\al_{n-1}}$}
\put(290,22){$f_{\al_{n-2}}$}
\put(235,39){$f_{\al_{n}}$}
\put(365,22){$f_{\al_{3}}$}
\put(405,22){$f_{\al_{2}}$}
\end{picture}
\end{center}
The minimal element of this sequence is
$v_s^{2n-1}=f_{\al_{s(1)}}\ldots f_{\al_{s(n-2)}}f_{\al_n} \ldots f_{\al_2}v_\la$.
The first $n+1$ nodes are independent of $s$. As for odd $N$, we drop the subscript $s$ from   $v^i_s$ for $s=\id$.

With every permutation $s$ such that $v^{2n-1}_s\not =0$ we associate the diagram $D_s$.
As before, we restrict consideration to the  triangular part of it, retaining only $w_k\tp v^i_s$ with $k+i\leqslant 2n$.
\begin{center}
\begin{picture}(450,250)
\qbezier(210,220)(260,240)(310,220) \put(209,219){\vector(-2,-1){2}}
\qbezier(130,220)(185,240)(240,220) \put(129,219.5){\vector(-2,-1){2}}

\qbezier(210,160)(260,180)(310,160) \put(209,159){\vector(-2,-1){2}}
\qbezier(130,160)(185,180)(240,160) \put(129,159.5){\vector(-2,-1){2}}

\qbezier(210,130)(260,150)(310,130) \put(209,129){\vector(-2,-1){2}}

\put(0,210){$\scriptstyle{w_1\tp v_s^{2n-1}}$}
\put(60,212 ){\vector(-1,0){20}}\put(63,212 ){$\ldots$}\put(106,212 ){\vector(-1,0){27}}
\put(110,210){$\scriptstyle{w_1\tp v^{n+1}}$}
\put(176,212 ){\vector(-1,0){20}}
\put(182,210){$\scriptstyle{w_1\tp v^{n}}$}
\put(231,210){$\scriptstyle{w_1\tp v^{n-1}}$}
\put(295,212 ){\vector(-1,0){20}}
\put(303,210){$\scriptstyle{w_1\tp v^{n-2}}$}
\put(365,212 ){\vector(-1,0){20}}
\put(368,212 ){$\ldots$}
\put(403,212 ){\vector(-1,0){20}}
\put(415,210){$\scriptstyle{w_1\tp v^1}$}

\put(65,183){$\cdot$}\put(75,178){$\cdot$}\put(85,173){$\cdot$}
\put(365,33){$\cdot$}\put(375,28){$\cdot$}\put(385,23){$\cdot$}

\put(428,208){\vector(0,-1){18}}
\put(427,177){$\vdots$}
\put(428,174){\vector(0,-1){18}}
\put(317,208){\vector(0,-1){18}}
\put(316,177){$\vdots$}
\put(317,174){\vector(0,-1){18}}
\put(245,208){\vector(0,-1){18}}
\put(244,177){$\vdots$}
\put(245,174){\vector(0,-1){18}}
\put(195,208){\vector(0,-1){18}}
\put(194,177){$\vdots$}
\put(195,174){\vector(0,-1){18}}
\put(124,208){\vector(0,-1){18}}
\put(123,177){$\vdots$}
\put(124,174){\vector(0,-1){18}}

\put(100,150){$\scriptstyle{w_{n-1}\tp v^{n+1}}$}
\put(168,152 ){\vector(-1,0){15}}
\put(171,150){$\scriptstyle{w_{n-1}\tp v^{n}}$}
\put(222,150){$\scriptstyle{w_{n-1}\tp v^{n-1}}$}
\put(290,152 ){\vector(-1,0){15}}
\put(293,150){$\scriptstyle{w_{n-1}\tp v^{n-2}}$}

\put(365,152 ){\vector(-1,0){20}}
\put(368,152 ){$\ldots$}
\put(403,152 ){\vector(-1,0){20}}
\put(365,122 ){\vector(-1,0){20}}
\put(368,122 ){$\ldots$}
\put(403,122 ){\vector(-1,0){20}}
\put(365,92 ){\vector(-1,0){20}}
\put(368,92 ){$\ldots$}
\put(403,92 ){\vector(-1,0){20}}
\put(365,62 ){\vector(-1,0){20}}
\put(368,62 ){$\ldots$}
\put(403,62 ){\vector(-1,0){20}}

\put(180,120){$\scriptstyle{w_n\tp v^{n}}$}
\put(231,120){$\scriptstyle{w_n\tp v^{n-1}}$}
\put(295,122 ){\vector(-1,0){20}}
\put(303,120){$\scriptstyle{w_n\tp v^{n-2}}$}

\put(222,90){$\scriptstyle{w_{n+1}\tp v^{n-1}}$}
\put(290,92 ){\vector(-1,0){15}}
\put(293,90){$\scriptstyle{w_{n+1}\tp v^{n-2}}$}

\put(293,60){$\scriptstyle{w_{n+2}\tp v^{n-2}}$}

\put(405,150){$\scriptstyle{w_{n-1}\tp v^1}$}
\put(428,146){\vector(0,-1){18}}\put(317,146){\vector(0,-1){18}}\put(245,146){\vector(0,-1){18}}\put(195,146){\vector(0,-1){18}}
\put(414,120){$\scriptstyle{w_{n}\tp v^1}$}
\put(405,90){$\scriptstyle{w_{n+1}\tp v^1}$}
\put(428,86){\vector(0,-1){18}}\put(317,86){\vector(0,-1){18}}
\put(405,60){$\scriptstyle{w_{n+2}\tp v^1}$}

\qbezier(429,98)(448,124)(429,148)\put(430,100){\vector(-1,-2){2}}
\qbezier(430,66)(449,94)(430,118)\put(432,69){\vector(-1,-2){2}}

\qbezier(319,98)(338,124)(319,148)\put(320,100){\vector(-1,-2){2}}
\qbezier(320,68)(339,94)(320,120)\put(320,69){\vector(-1,-2){2}}

\qbezier(249,98)(266,124)(247,148)\put(250,100){\vector(-1,-2){2}}

\put(428,58){\vector(0,-1){18}}
\put(427,27){$\vdots$}
\put(428,24){\vector(0,-1){18}}
\put(400,0){$\scriptstyle{w_{2n-1}\tp v^1}$}
\put(430,200){$\scriptstyle{f_{\al_1}}$}
\put(430,165){$\scriptstyle{f_{\al_{n-2}}}$}
\put(403,135){$\scriptstyle{f_{\al_{n-1}}}$}
\put(440,120){$\scriptstyle{f_{\al_{n}}}$}
\put(440,90){$\scriptstyle{f_{\al_{n}}}$}
\put(403,75){$\scriptstyle{f_{\al_{n-1}}}$}
\put(430,47){$\scriptstyle{f_{\al_{n-2}}}$}
\put(430,15){$\scriptstyle{f_{\al_{2}}}$}
\put(180,237){$\scriptstyle{f_{\al_{n}}}$}\put(260,237){$\scriptstyle{f_{\al_{n}}}$}
\put(40,220){$\scriptstyle{f_{\al_{s(1)}}}$}\put(80,220){$\scriptstyle{f_{\al_{s(n-2)}}}$}\put(160,220){$\scriptstyle{f_{\al_{n-1}}}$}\put(280,220){$\scriptstyle{f_{\al_{n-1}}}$}\put(345,220){$\scriptstyle{f_{\al_{n-2}}}$}\put(390,220){$\scriptstyle{f_{\al_{2}}}$}
\put(220,20){$D_s$}
\end{picture}
\end{center}
Only the part of $D_s$ which lies to the left of column $n+1$ depends on $s$.
We have emphasized this by omitting the subscript  $s$ in the right part.

The diagrams $D_s$ account for Chevalley  monomials $v_s^{2n-1}$ participating in $\{x_{i}\}_{i=2}^{n-2}$.
The vectors $x_{n-1}$ and $x_{n}$ bring about
different diagrams. Due to the symmetry between $x_{n-1}$ and $x_{n}$ we will consider only  $x_{n}$.
Define the set $\{v^i\}_{i=1}^{2n-1}\subset M_\la$ as follows. The first $n-1$ vectors  are as
before: $v^1=v_{\la}$,  $v^i=f_{\al_i}v^{i-1}$, $i=2,\ldots, n-1$. The remaining $n$ vectors are set to be
$$
v^{n-1+k}=f_{\al_k}v^{n+k-2}, \quad k=1,\ldots, n.
$$
The arrows $v^{i-1}\leftarrow v^i$ are uniquely determined by the set of nodes $\{v^i\}$.
Given a permutation $s$ of $1,\ldots, n-2$ we define the set $\{v_s^i\}_{i=1}^{2n-1}$
by
$v^{i}_s=v^{i}$, $i=1,\ldots, n-1$ and
$$
v_s^{n-1+k}=f_{\al_{s(k)}}v_s^{n+k-2}, \quad k=1,\ldots, n.
$$
In fact, these vectors are zero for most $s$. Of all $s$ we  only need the transposition $(1,2)$.
This is sufficient for $x_{n}$, which comprises two Chevalley  monomials, due to the factor $[f_{\al_1},f_{\al_2}]_a$
 in it.
\begin{center}
\begin{picture}(370,100)
\put(180,90){${D_\id^n}$}

\put(0,60){$\scriptstyle{w_1\tp v^{2n-1}}$}
\put(68,62 ){\vector(-1,0){20}}
\put(70,62 ){$\ldots$}
\put(106,62 ){\vector(-1,0){20}}
\put(110,60){$\scriptstyle{w_1\tp v^{n+1}}$}
\put(176,62 ){\vector(-1,0){20}}
\put(182,60){$\scriptstyle{w_1\tp v^{n-1}}$}
\put(229,62 ){\vector(-1,0){20}}
\put(231,60){$\scriptstyle{w_1\tp v^{n-2}}$}

\put(290,62 ){\vector(-1,0){20}}
\put(293,62 ){$\ldots$}
\put(328,62 ){\vector(-1,0){20}}
\put(340,60){$\scriptstyle{w_1\tp v^1}$}

\put(65,23){$\cdot$}\put(75,18){$\cdot$}\put(85,13){$\cdot$}

\put(355,58){\vector(0,-1){18}}
\put(354,27){$\vdots$}
\put(355,24){\vector(0,-1){18}}

\put(245,58){\vector(0,-1){18}}
\put(244,27){$\vdots$}
\put(245,24){\vector(0,-1){18}}
\put(195,58){\vector(0,-1){18}}
\put(194,27){$\vdots$}
\put(195,24){\vector(0,-1){18}}
\put(124,58){\vector(0,-1){18}}
\put(123,27){$\vdots$}
\put(124,24){\vector(0,-1){18}}

\put(106,62 ){\vector(-1,0){20}}

\put(100,0){$\scriptstyle{w_{n-1}\tp v^{n+1}}$}
\put(168,2 ){\vector(-1,0){15}}
\put(171,0){$\scriptstyle{w_{n-1}\tp v^{n-1}}$}
\put(221,1 ){\vector(-1,0){15}}
\put(222,0){$\scriptstyle{w_{n-1}\tp v^{n-2}}$}

\put(290,2 ){\vector(-1,0){20}}
\put(293,2 ){$\ldots$}
\put(328,2 ){\vector(-1,0){20}}

\put(330,0){$\scriptstyle{w_{n-1}\tp v^1}$}

\put(360,50){$\scriptstyle{f_{\al_1}}$}
\put(360,15){$\scriptstyle{f_{\al_{n-2}}}$}

\put(53,70){$\scriptstyle{f_{\al_{n}}}$}\put(93,70){$\scriptstyle{f_{\al_{2}}}$}\put(163,70){$\scriptstyle{f_{\al_{1}}}$}
\put(213,70){$\scriptstyle{f_{\al_{n-1}}}$}\put(273,70){$\scriptstyle{f_{\al_{n-2}}}$}\put(313,70){$\scriptstyle{f_{\al_2}}$}

\end{picture}
\end{center}
Here we display only the part which is relevant to our study.
In what follows, we have to  mind the arrows $f_{\al_{n-1}},f_{\al_{n}}\in \mathrm{Arr}(w_{n-1})$,
which are directed from the bottom line. Note that the rightmost square of $(n-1)\times (n-1)$ nodes is the same in
$D^n_s$ for all $s$. It is also a sub-graph in $D_\id$.

 Denote by $D_\id'\subset D_\id$  the sub-graph above the principal diagonal, i.e. $\{w_k\tp v^j\}$ such that
 $k+j\leqslant 2n-1$.
 For $s\not =\id $, let $i\in [1,n-2]$ the maximal integer displaced by $s$.
 We denote by $D_s'\subset D_s$  the trapezoid  rested on line $i$, i.e.
the set of nodes  $\{w_k\tp v^j\}$ such that $k+j\leqslant 2n-1$ and $k\leqslant i$.

\begin{lemma}
Suppose that $q^{4-2p}\not =-1$. Then $D_s'$ and ${D_s^n}'$ lie in $\hat M$.
\label{D sigma=id}
\end{lemma}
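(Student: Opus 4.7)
The plan is to run the argument of Lemma \ref{sigma=id} in parallel for each diagram family, with two adjustments dictated by the D-series: the branching of the natural $\s\o(2n)$-representation at rows $w_{n-1}, w_n, w_{n+1}$, and the new tail diagrams ${D_s^n}$ coming from the vectors $x_n$ (treating $x_{n-1}$ symmetrically).

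I would first handle $D_s'$ for $s = \id$. The highest node $u_{\nu_1} = w_1 \tp v_\la$ lies in $\hat M_1 \subset \hat M$, and $f_{\al_1}(w_1 \tp v_\la) \simeq w_2 \tp v_\la \in \hat M$. The hypothesis $q^{4-2p}\neq -1$ is precisely the condition, with $m = 2$, that makes the coefficient in Lemma \ref{gl-sym} nonzero, so $u_{\nu_2} \simeq w_3 \tp v_\la \mod \hat M_1$, giving $w_3 \tp v_\la \in \hat M$. Since $f_{\al_i}v_\la = 0$ for $i \neq 2$, each vertical arrow in the natural representation acts on $w_k \tp v_\la$ without cross terms --- including the branching arrow $f_{\al_n}\colon w_{n-1}\to w_{n+1}$ and the length-two arrow $f_{\al_n}\colon w_n\to w_{n+2}$. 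This places the full rightmost column of $D_\id'$ inside $\hat M$. I then decompose $D_\id'$ into simple sub-rectangles above row $n-1$ and below row $n+1$, apply Lemma \ref{triangle} to each, and fill the finitely many nodes lying in the three branching rows by hand, using the coproduct formula and the neighbouring nodes already shown to lie in $\hat M$.

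The case $s \neq \id$ proceeds exactly as in Lemma \ref{sigma=id}: if $i \in [1,n-2]$ is the largest index displaced by $s$, the right portion of $D_s'$ coincides with the corresponding portion of $D_\id'$ and lies in $\hat M$; Corollary \ref{interval}, applied to the horizontal arrow $f_{\al_{s(i)}}$ which differs from the vertical $f_{\al_i}$ constituting $\mathrm{Arr}(w_i)$, carries the next column into $\hat M$; and Lemma \ref{triangle} absorbs the remaining trapezoid. For ${D_s^n}'$ only $s = \id$ and the transposition $s = (1,2)$ are relevant, since $x_n$ expands into precisely those two monomials. The rightmost $(n-1)\times(n-1)$ square of ${D_\id^n}'$ is a sub-diagram of $D_\id'$ and is therefore already in $\hat M$; the remaining leftmost column is captured by Corollary \ref{interval} applied to a horizontal arrow at the lower part of ${D_\id^n}'$ distinct from the relevant vertical arrow. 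The case $s = (1,2)$ then reduces to $s = \id$ in the usual way.

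The main technical obstacle is the failure of the simple-rectangle hypothesis of Lemma \ref{triangle} at the branching rows $n-1, n, n+1$, where $|\mathrm{Arr}(w_{n-1})|=2$ and the arrows out of $w_{n-1}$ and $w_n$ have length two. This is handled by splitting $D_s'$ along these rows into simple pieces and checking the finitely many branching nodes directly from quasi-primitivity --- a book-keeping nuisance rather than a conceptual obstruction, and in particular one whose complexity is independent of $n$.
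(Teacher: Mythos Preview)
Your overall strategy matches the paper's, and the cases $s\neq\id$ and ${D^n_s}'$ are handled essentially as the paper does. However, your treatment of $D'_\id$ has a gap: the row-wise decomposition ``above row $n-1$ / below row $n+1$'' addresses only the \emph{vertical} branching of the $\s\o(2n)$-diagram, not the \emph{horizontal} one. In the horizontal graph the only arrow out of $v^{n-1}$ is $f_{\al_n}$ of length~$2$ (to $v^{n+1}$), so there is no length-$1$ arrow $v^{n-1}\to v^n$. Consequently your top band, which spans all columns, is \emph{not} a simple rectangle in the sense required by Lemma~\ref{triangle}, and the lemma cannot be invoked on it directly. This defect is not confined to the three branching rows, so it is not covered by your ``fill by hand'' clause.

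The paper avoids this by slicing $D'_\id$ by \emph{columns} rather than rows. Columns $1$ through $n-2$ form a block where the horizontal arrows are $f_{\al_2},\ldots,f_{\al_{n-2}}$, each of which acts with length~$\leqslant 1$ on the $w_k$ factor; the Lemma~\ref{triangle} argument then goes through despite the vertical branching (which is irrelevant to the induction, as only horizontal arrows are used). Column $n-1$ is then reached from column $n-2$ via $f_{\al_{n-1}}$, and column $n$ from column $n-2$ via $f_{\al_n}$, each with an explicit ``modulo one node down'' correction lying in the part already established. Once column $n$ is secured, the remaining triangle to its left is genuinely simple and Lemma~\ref{triangle} finishes the job. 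Your row decomposition can be salvaged, but only after inserting a column-by-column patch at columns $n-2,\,n-1,\,n$, at which point it coincides with the paper's argument.
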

\begin{proof}
The situation is slightly different from the settings of Lemma \ref{triangle} (odd $N$), as the diagrams $D_s$ are not simple.
Applying similar arguments as in the proof of Lemma \ref{sigma=id} we check that the trapezoid in $D'_\id$ on the right of column $w_i\tp v^{n-2}$ inclusive lies in $\hat M$. The generator $f_{\al_{n-1}}$ sends the nodes of this column one step to the left modulo
maybe
one step down. Since the node $w_{n}\tp v^{n-2}$ is sent strictly leftward, column $n-2$ of $D'_\id$ is mapped onto
column $n-1$, modulo its column $n-2$, which is proved to be in $\hat M$. Therefore, column $n-1$ of $D'_\id$
lies in $\hat M$.
The bottom node of column $n$ of $D'_\id$ is $w_{n-1}\tp v^n$. Modulo $w_{n+1}\tp v^{n-2}\in \hat M$, it is the $f_{\al_n}$-image of $w_{n-1}\tp v^{n-2}\in \hat M$. Hence $w_{n-1}\tp v^n\in \hat M$. The nodes higher in
this column are also obtained from column $n-2$ via $f_{\al_n}$, which now acts strictly leftward.
Therefore, the right part of $D'_\id$ lies in $\hat M$ up to column $n$. The remaining part of $D'_\id$ to the left of
column $n$
inclusive is a triangle in a simple rectangle (just ignore the leftmost $f_{\al_n}$) and falls into Lemma \ref{triangle}.

Now suppose that $s\not=\id$ and  and let $i$ be the highest integer displaced by $s$. Contrary to $s=\id$, this case
is pretty similar to  Proposition \ref{dir_sum_symB}
for odd $N$. Notice that  right rectangular part of $D_s'$ up to column $2n-i-1$ is the same as in  $D'_\id\subset D_\id$
and lies in $\hat M$ as argued.
Since $f_{\al_{s(i)}}\not =f_{\al_i}$,  column $2n-i$ of $D'_\id$
lies in $\hat M$, by Corollary \ref{interval}.
 The remaining part of $D'_\id$ is the triangle bounded by column $2n-i$ on the right. It belongs to $\hat M$ by
Lemma \ref{triangle}.

The proof for ${D^n_s}'$ for $s=\id, (1,2)$ is similar to $D_s'$ with $s\not =\id$.  The key observation is that right rectangular part up to column $n-1$
is a sub-graph in $D'_\id$ and hence lie in $\hat M$. Further arguments are based on Lemma \ref{interval} applied
to column $n-1$ of $D'_s$.
\end{proof}

Now we are ready to prove the main result of this section.
\begin{propn}
\label{dir_sum_sym}
Suppose that $q^{2m-2p}\not =-1$. Then the tensor product $\C^N\tp M_\la$ splits to the direct sum $M_1\oplus M_2$.
\end{propn}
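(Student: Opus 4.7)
The plan is to follow the strategy of Proposition \ref{dir_sum_symB}, replacing Lemma \ref{sigma=id} with Lemma \ref{D sigma=id} and running the diagonal-chasing argument on both diagram families $D_\id$ and $D^n_\id$ in parallel. The two additional features relative to the B-series are the branching in the $D_n$ Dynkin diagram and the presence of two ``tail'' basis vectors $x_{n-1}, x_n$ in the singular vector.

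First, I would verify that $M_1 \cap M_2 = 0$. The invariant matrix $\Q$ (introduced in the next section) acts as a scalar $\mu_i$ on each $\hat M_i$, computed from the general formulas of \cite{M2}. In the present case $\mu_1 = -q^{-2p}$ and $\mu_2 = q^{-2m}$, and these agree exactly when $q^{2m-2p} = -1$, which is excluded by the hypothesis. Hence $M_1 \cap M_2 = 0$ and the sum $M_1 + M_2$ is direct.

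Second, I would show $\C^N \tp v_\la \subset M := M_1 \oplus M_2$ by stepwise application of negative Chevalley generators starting from $w_1 \tp v_\la = u_{\nu_1} \in M_1$. For $i = 2, \ldots, m$ the vector $w_i \tp v_\la$ is obtained by applying $f_{\al_{i-1}}$ inside the $GL(m)$ block, hence lies in $M$. The critical transition from $w_m \tp v_\la$ to $w_{m+1} \tp v_\la$ is controlled by Lemma \ref{gl-sym}: its scalar coefficient is invertible exactly under the hypothesis, since $q^{2(\al_m,\la)} = -q^{-2p}$ on $\mathfrak{C}^*_\k$. From $w_{m+1} \tp v_\la$ one reduces the remaining propagation to the $m = 2$ specialization via the Chevalley action of $U_q(\g')$ with $\g' = \g_{p+2}$, and a final round of $\l_-$ recovers $w_k \tp v_\la$ for $k > N - m + 1$.

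Third, the core $m = 2$ step uses diagonal-chasing in $D_\id$ and $D^n_\id$. By Lemma \ref{D sigma=id}, the trapezoids $D_s'$ and $(D^n_s)'$ all lie in $\hat M := \hat M_1 + \hat M_2$. In $D_\id$ the nodes $w_k \tp v^{2n-k}$ on the diagonal above the principal one are each sent by the same Chevalley generator both vertically to $w_{k+1} \tp v^{2n-k}$ and horizontally to $w_k \tp v^{2n-k+1}$, making these two adjacent principal-diagonal nodes proportional modulo $\hat M$. Iterating down the principal diagonal yields the equivalence $w_{2n-1} \tp v_\la \simeq w_1 \tp v^{2n-1}$ modulo $\hat M$, and the parallel argument in $D^n_\id$ produces the complementary relation for $w_{2n} \tp v_\la$. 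Since $v_{\la-\dt}$ is a specific linear combination of the minimal nodes of $D_\id$, $D^n_\id$, and of the non-identity families $D_s$, $D^n_{(1,2)}$ (whose minimal nodes already sit in $\hat M$ by Lemma \ref{D sigma=id}), it follows that $w_1 \tp v_{\la-\dt}$ is equivalent modulo $\hat M$ to a linear combination of $w_{2n-1} \tp v_\la$ and $w_{2n} \tp v_\la$. The vanishing $v_{\la-\dt} = 0$ in $M_\la$ then forces both into $M$.

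The principal obstacle is the branching at the middle of the natural representation: both $f_{\al_{n-1}}$ and $f_{\al_n}$ act nontrivially on $w_{n-1}, w_n, w_{n+1}, w_{n+2}$, and $v_{\la-\dt}$ involves two independent tail vectors. One must run the diagonal-chase on $D_\id$ and $D^n_\id$ simultaneously and track scalar coefficients carefully enough so that the two chases jointly reconstruct a scalar multiple of $v_{\la-\dt}$ rather than producing two disjoint relations, each of which individually would only give access to one of $w_{2n-1} \tp v_\la$ or $w_{2n} \tp v_\la$. Apart from this coefficient bookkeeping, the argument is structurally identical to Proposition \ref{dir_sum_symB}.
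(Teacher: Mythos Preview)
Your overall architecture matches the paper's, but you have misread the role of the tail diagrams $D^n_s$, and this creates the very ``obstacle'' you flag at the end. There is no diagonal chase on $D^n_\id$: the horizontal arrows in $D^n_s$ (which run $f_{\al_2},\ldots,f_{\al_{n-1}},f_{\al_1},f_{\al_2},\ldots$) do \emph{not} match the vertical arrows along the principal diagonal the way they do in $D_\id$, so the mechanism producing the relation $w_{2n-1}\tp v_\la\simeq w_1\tp v^{2n-1}\!\!\mod\hat M$ simply does not apply. What Lemma~\ref{D sigma=id} actually gives for $D^n_s$ (with $s=\id,(1,2)$) is the direct containment $w_1\tp v^{2n-1}_s\in\hat M$ of the minimal node itself; the proof of the lemma even says this case is handled ``similarly to $D_s'$ with $s\neq\id$'', i.e.\ by pushing leftward via Corollary~\ref{interval}, not by a diagonal equivalence. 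Hence $w_1\tp x_n\in\hat M$ and, by the $\al_{n-1}\leftrightarrow\al_n$ symmetry, $w_1\tp x_{n-1}\in\hat M$.

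Once you see this, your coefficient-bookkeeping worry evaporates. All of the $x_i$ with $i\geqslant 3$ and both tail vectors $x_{n-1},x_n$ have $w_1\tp x_i\in\hat M$; only the $x_2$ term contributes, and the single $D_\id$ diagonal chase gives $w_1\tp v_{\la-\dt}\simeq c\,w_{2n-1}\tp v_\la\!\!\mod\hat M$ for a nonzero scalar $c$. Since $v_{\la-\dt}=0$ in $M_\la$, this yields $w_{2n-1}\tp v_\la\in M$ alone. The passage to $w_{2n}\tp v_\la$ is then trivial: $f_{\al_1}(w_{2n-1}\tp v_\la)=w_{2n}\tp v_\la$ because $f_{\al_1}v_\la=0$. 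So there is no ``linear combination of $w_{2n-1}\tp v_\la$ and $w_{2n}\tp v_\la$'' to disentangle, and your proposed simultaneous chase is unnecessary.
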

\begin{proof}
Similar argument as in the proof of Proposition \ref{dir_sum_symB} tells us that  $M_1\cap M_2=\{0\}$.
Indeed, the eigenvalues of $\Q$ on $\hat M_1$ and $\hat M_2$ are $\mu_1=-q^{2p}$, $\mu_2=q^{-2m}$, cf. (\ref{-1+1-1}).
They are distinct by the hypothesis, hence the sum $M_1+M_2$ is direct.
As for $\g=\s\o(2n+1)$, we need to show that $M=M_1\oplus M_2$ exhausts all of $\C^N\tp M_\la$, and it is sufficient to
prove the inclusion $\C^N\tp v_\la\subset M$.

First we consider the case $m=2$. Then $w_{2n-1}$ is the highest weight vector of the $\l$-submodule $\C^m\subset \C^N$ of
highest weight $-\ve_2$. As before, we intend to reach $w_{2n-1}\tp v_\la$ from $w_{1}\tp v_\la$ through all
$w_i\tp v_\la$ in between staying within $M$. Then we get $w_{2n}\tp v_\la\in M$ by applying $f_{\al_1}$ to $w_{2n-1}\tp v_\la$.

By Lemma \ref{D sigma=id}, the diagonal of $D_\id$ over the principal diagonal lies in $\hat M$, as it
belongs to $D_\id'$.
The vertical and horizontal arrows applied to this diagonal coincide. The same is true regarding
the nodes $w_{n-1}\tp v^{n-1}$ and $w_{n}\tp v^{n-2}$. Therefore, up to a non-zero scalar
factor,  the elements in the principal diagonal are all equivalent modulo $\hat M$. In particular,
$w_1\tp v^{2n-1} \simeq w_{2n-1}\tp v_\la \mod \hat M$.
Now notice that $x_{2}=v^{2n-1}-av^{2n-1}_s$, where $s$ is the transposition $1\leftrightarrow2$.
Observe that all other $x_{i}$, $i<n-1$ are linear combinations
of the monomials $v^{2n-1}_s$ for certain $s \not=\id$. Since $v^{2n-1}_s \in \hat M$ for such $s$ by Lemma \ref{D sigma=id},
all $x_{i}$ with $i<n-1$ belong to $\hat M$. The vector $x_{n}$ is a combination of two monomials
associated with $D_s^n$, $s=\id$, $s=(1,2)$. In view of Lemma \ref{D sigma=id} we conclude that
$w_1\tp x_{n}\in \hat M$.
Due to the symmetry between $x_{n-1}$ and $x_{n}$, we conclude that  $w_1\tp x_{n-1}\in \hat M$ too.

Since the singular vector $v_{\la-\dt}$ is a linear combination of $x_{i}$, $i=2,\ldots, n$,  the vector $w_{2n-1}\tp v_\la$
is equivalent to $w_1\tp v_{\la-\dt}$  modulo $\hat M$. Hence $w_{2n-1}\tp v_\la \in M$ as required.

Now we lift the restriction $m=2$. This is done similarly to the $\g=\s\o(2n+1)$-case. Applying the $f_{\al_1},\ldots, f_{\al_{m-1}}\in \l_-$
we get $w_i\tp v_\la\in M$ for $i=1,\ldots, m$. Lemma  \ref{gl-sym} facilitates transition to
from $w_{m}\tp v_\la\in M$ to $w_{m+1}\tp v_\la\in M$, under the assumption $q^{2m-2p}\not =-1$. Further we employ to the
quantum subgroup $U_q(\g')$, $\g'=g_{p+2}$, and reduce consideration to the case $m=2$.
This gives $w_{i}\tp v_\la\in M$, $i=m+2,\ldots N-m+2$. Finally, applying the generators
$f_{\al_1},\ldots, f_{\al_{m-1}}\in \l_-$, we descend from $w_{N-m+2}\tp v_\la\in M$ to $w_{N}\tp v_\la\in M$. This completes the proof.
\end{proof}

\begin{remark}\rm
The assumption $q^{2m- P}\not =-1$ from Propositions \ref{dir_sum_symB} and \ref{dir_sum_sym} can be
regarded as a condition on $q$ if one considers the quantum group $U_q(\g)$ over the complex field with  $q\in \C$, or over the field
$\C(q)$ of rational functions of $q$. This condition is fulfilled for
an open set including $q=1$ and therefore over the formal series in $\hbar$ with $q=e^\hbar$. Observe that
$q^{2m- P} =-1$ if and only if the eigenvalues of $\Q$ coincide, cf. (\ref{-1+1-1}). This is accountable by Lemma \ref{gl-sym},
because for such $q$ we get the inclusion $\hat M_2\subset \hat M_1$.
\end{remark}

\subsection{The module $\C^N\tp M_\la$, general $\k$}
The symmetric case worked out in detail in the preceding sections will serve as an illustration to the case
of general $\k$ considered below. However, our strategy will be slightly different, in order to save
the effort of calculating singular vectors in $\C^N\tp M_\la$. We pay a price for that by getting a weaker
result about the structure of $\C^N\tp M_\la$. Namely, instead of direct sum decomposition of
$\C^N\tp M_\la$ we construct a filtration by highest weight modules. Still it is sufficient for our purposes,
as all we need to know is the spectrum of the quantum coordinate matrix $\Q$, cf. (\ref{Q-matrix}).
 Under certain conditions, it can be extracted from
the graded module associated with filtration as well as from direct sum decomposition.

We have the irreducible decomposition
$$
\C^N=\C^{n_1}\oplus\ldots \oplus  \C^{n_\ell}\oplus \C^{m}\oplus \C^{P}\oplus \C^{m}\oplus \C^{n_\ell}\oplus \ldots \oplus \C^{n_1}
$$
of the natural $\g$-representation $\C^N$ into $\l$-blocks. The submodules $\C^{n_i}$ carry the natural
 and conatural representations the block $\g\l(n_i)\subset \l$, $i=1,\ldots,\ell+1$, with $n_{\ell+1}=m$.
The submodule $\C^{P}$ supports the natural representation of the block $\s\o(P)\subset\l$.
We enumerate these submodules from left to right
 as $W_i$, $i=1,\ldots, 2\ell+3$.
This decomposition is
compatible with the standard basis $\{w_i\}$, and the basis element with the lowest number falling into the block
is its highest weight vector.
Let $\nu_i$, $i=1,\ldots, 2\ell+3$, be the highest weights of the irreducible blocks and let  $w_{\nu_i}\in W_i$ denote their
highest weight vectors. As we said, they form a subset of the standard basis.
Explicitly, the highest weights of the blocks are
$\nu_i=\ve_{n_1+\ldots +n_{i-1}+1}$ for $i=1,\ldots,\ell+2$ and
$\nu_{2\ell+4-i}=-\ve_{n_1+\ldots +n_{i}}$ for $i=1,\ldots,\ell+1$.

For generic $\la$ this decomposition gives rise to the decomposition
$
\C^N\tp \hat M_\la=\oplus_{i=1}^{2\ell+3}\hat M_i
$
where each $\hat M_i$ is the parabolic Verma module induced from $W_i\tp \C_\la$.
Let $M_i$ denote its image under the projection to $\C^N\tp M_\la$.

Transition to the isotropy subalgebra $\k\supset \l$ merges two copies of $\C^m$  up into
a single irreducible $\k$-submodule. As a result, $M_{\ell+3}$ should disappear from
$\C^N\tp M_\la$. We saw this effect for $\ell=0$ and we expect it for general $\k$. However, constructing the direct sum decomposition
of $\C^N\tp M_\la$ along the same lines requires the knowledge of singular vectors for all $\hat M_i$.
Instead, we work with a filtration, which construction is much easier. We do not even
check that each graded component, apart from the ${\ell+3}$-d, survives in the projection  $\C^N\tp \hat M_\la\to \C^N\tp M_\la$.
We just need to make sure that the elementary divisor corresponding to the quotient $V_{\ell+3}/V_{\ell+2}$ drops from the minimal polynomial
of $\Q$.

For all $j=1,\ldots, 2\ell+3$ we denote by $ \hat V_j$ the submodule in  $ \C^{N}\tp  \hat M_\la$ generated by
$\{w_{\nu_i}\tp v_\la\}_{i=1,\ldots, j}$. Let $ V_j$ denote its image in  $ \C^{N}\tp  M_\la$.
We have the obvious inclusions $\hat V_{j-1}\subset \hat V_{j}$, $V_{j-1}\subset  V_{j}$.
It is convenient to set $\hat V_0$ and  $V_0$ to $\{0\}$.

Propositions \ref{dir_sum_symB} and  \ref{dir_sum_sym}, which are formulated for the symmetric case, can be restated in a milder setting
as $\C^N\tp M_\la\simeq V_1\oplus V_2/V_1$. The equality $\C^N\tp M_\la=V_2=V_3$ remains true if we relax the assumption on $\la$. This assumption facilitates the equivalence $u_{\nu_2}\simeq w_{m+1}\tp v_\la\mod V_1$, so the proof
remains essentially the same if $u_{\nu_2}$ is replaced with $w_{m+1}\tp v_\la$. Here we establish a generalization of this fact.

\begin{propn}
\label{filt_WM}
The submodules $\{0\}=V_0\subset V_1\subset \ldots \subset V_{2\ell+3}$ form an ascending filtration of
$ \C^{N}\tp  M_\la $.
For each $k=1,\ldots, 2\ell+3$, the graded component $V_{k}/V_{k-1}$ is either $\{0\}$ or
 generated by (the image of) $w_{\nu_k}\tp v_\la$,
which is  the highest weight vector in $V_{k}/V_{k-1}$.
In particular, $V_{\ell+2}=V_{\ell+3}$.
\end{propn}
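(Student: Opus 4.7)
My plan is to separate the statement into two non-trivial claims: (a) the image of $w_{\nu_k}\tp v_\la$ in $V_k/V_{k-1}$ is either zero or a highest weight vector, and (b) the specific vanishing $V_{\ell+3}=V_{\ell+2}$. The remaining assertions — the chain of inclusions $V_0\subset V_1\subset\cdots\subset V_{2\ell+3}$ and the cyclic generation of $V_k/V_{k-1}$ by the image of $w_{\nu_k}\tp v_\la$ — are immediate from the nested construction of $V_k$ as the $U_q(\g)$-submodule generated by $w_{\nu_1}\tp v_\la,\ldots,w_{\nu_k}\tp v_\la$.

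For (a), I would compute $e_\al(w_{\nu_k}\tp v_\la)$ for every $\al\in\Pi^+$. Since $v_\la$ remains a $U_q(\g)$-highest weight vector in $M_\la$, we have $e_\al v_\la=0$, so $e_\al(w_{\nu_k}\tp v_\la)=(e_\al w_{\nu_k})\tp v_\la$. For $\al\in\Pi^+_\l$ this vanishes because $w_{\nu_k}$ is the $\l$-highest weight vector of its block $W_k$. For $\al\in\Pi^+\setminus\Pi^+_\l$ (the "crossed" simple roots separating neighbouring $\l$-blocks), the action of $e_\al$ on the standard basis of $\C^N$ sends $w_{\nu_k}$ either to zero or to a scalar multiple of a single basis vector $w_j$ lying in an earlier $\l$-block $W_{k'}$ with $k'<k$. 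Since $w_j$ is obtainable from $w_{\nu_{k'}}$ by a suitable element of $U_q(\l_-)$, the vector $(e_\al w_{\nu_k})\tp v_\la$ lies in the $U_q(\g)$-submodule generated by $w_{\nu_{k'}}\tp v_\la$, hence in $V_{k-1}$.

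For (b), I would reduce to the symmetric case established in Propositions~\ref{dir_sum_symB}--\ref{dir_sum_sym}. The three consecutive $\l$-blocks $W_{\ell+1},W_{\ell+2},W_{\ell+3}$, of dimensions $m,P,m$, jointly span the natural representation $\C^{N'}$ of the orthogonal subalgebra $\g'=\g_{p+2}\subset\g$ from Section~\ref{module_M_general_k}, and the induced $\l$-decomposition $\C^m\oplus\C^P\oplus\C^m$ agrees with the Levi decomposition of $\C^{N'}$ that underlies the symmetric case. By construction of $\mathfrak{C}^*_\k$, the restriction of $\la$ to $\g'$ satisfies the hypothesis of Proposition~\ref{singular}, and the singular vector $v_{\la-\dt}$, built from $\g'$-generators alone, vanishes in $M_\la$ by Proposition~\ref{singular_Verma}. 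Rerunning the diagram chase of the symmetric case inside $\C^{N'}\tp U_q(\g'_-)v_\la\subset\C^N\tp\hat M_\la$, with $w_{\nu_{\ell+1}}$ taking over the role of "$w_1$", I expect to obtain $w_{\nu_{\ell+3}}\tp v_\la\equiv c\,w_{\nu_{\ell+1}}\tp v_{\la-\dt}\pmod{\hat V_{\ell+2}}$ for some nonzero scalar $c$; projection to $\C^N\tp M_\la$ then gives $w_{\nu_{\ell+3}}\tp v_\la\in V_{\ell+2}$.

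The main obstacle will be verifying that the extra $\l$-blocks $W_1,\ldots,W_\ell,W_{\ell+4},\ldots,W_{2\ell+3}$ do not disturb the $\g'$-equivariant analysis, i.e.\ that $\hat V_{\ell+2}$ faithfully subsumes the symmetric-case submodule $\hat M=\hat M_{\ell+1}+\hat M_{\ell+2}$ when transplanted into $\C^N\tp\hat M_\la$. This should follow because $U_q(\g')$ preserves $\C^{N'}$, the restriction of $\hat M_\la$ to $\g'$ retains a parabolic Verma structure with Levi $\l\cap\g'=\g\l(m)\oplus\s\o(P)$, and the Chevalley generators in $\Pi^+\setminus\Pi^+_{\g'}$ relate the middle triple $W_{\ell+1}\oplus W_{\ell+2}\oplus W_{\ell+3}$ only to basis vectors of $\C^N$ lying in blocks $W_{k'}$ with $k'\leq\ell$, whose generators are already collected in $V_\ell$.
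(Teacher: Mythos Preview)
Your overall strategy matches the paper's: establish $W_k\tp v_\la\subset V_k$ by acting with the relevant $\l_-$-block on $w_{\nu_k}\tp v_\la$, deduce the highest-weight property of each quotient from $e_\al w_{\nu_k}\in\bigoplus_{i<k}W_i$, and reduce the equality $V_{\ell+3}=V_{\ell+2}$ to the symmetric case via the subalgebra $\g'$. Part~(a) is correct and is exactly what the paper does.

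In part~(b) there is a dimension mismatch. The subalgebra $\g'=\g_{p+2}$ of Section~\ref{module_M_general_k} has rank $p+2$; its natural representation has dimension $P+4$ and is spanned by $w_{n-p-1},\ldots,w_{N-n+p+2}$. This coincides with $W_{\ell+1}\oplus W_{\ell+2}\oplus W_{\ell+3}$ only when $m=2$; for $m>2$ it contains just the last two vectors of $W_{\ell+1}$ and the first two of $W_{\ell+3}$. Correspondingly $\l\cap\g'=\g\l(2)\oplus\s\o(P)$, not $\g\l(m)\oplus\s\o(P)$, and the role of ``$w_1$'' in the $\g'$-diagram chase is played by $w_{n-p-1}$, not by $w_{\nu_{\ell+1}}$. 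As written, your chase cannot start, because $w_{\nu_{\ell+1}}$ sits in a trivial $\g'$-summand of $\C^N$ when $m>2$.

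The repair is easy and can go either of two ways. You may keep $\g'=\g_{p+2}$ and feed the $m'=2$ symmetric analysis the vectors $w_{n-p-1}\tp v_\la$ and $w_{n-p}\tp v_\la$, which lie in $V_{\ell+1}\subset V_{\ell+2}$ by the inclusion $W_{\ell+1}\tp v_\la\subset V_{\ell+1}$ you already have; this is what the paper does. Alternatively, replace $\g_{p+2}$ by $\g_{m+p}$: then the natural representation genuinely is $W_{\ell+1}\oplus W_{\ell+2}\oplus W_{\ell+3}$, the intersection with $\l$ is $\g\l(m)\oplus\s\o(P)$, and the filtration version of Propositions~\ref{dir_sum_symB}--\ref{dir_sum_sym} for general $m$ applies verbatim.
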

\begin{proof}
Our strategy is similar to the proof of Propositions \ref{dir_sum_symB} and  \ref{dir_sum_sym}.
We mean to  show that  $\oplus_{i=1}^k W_i\tp v_\la\subset V_k$, which in particular imples $\C^{N}\tp v_\la\subset
V_{2\ell+3}$ for $k=2\ell+3$.
Then $e_{\al}(w_{\nu_k}\tp v_\la)=0 \!\!\mod V_{k-1}$, i.e. $w_{\nu_k}\tp v_\la$ is a singular vector in $V_{k-1}/V_{k}$
if not zero.
Since $V_{k-1}/V_{k}$ is generated by $w_{\nu_k}\tp v_\la$, it is then the highest weight vector.
  This will imply $\C^{N}\tp v_\la\subset V_{2\ell+3}$ and
$V_{2\ell+3}=   M_\la$.

Thus, we wish to prove that  $W_k\tp v_\la\subset V_k$. This is true
for $k=0$ if we set $W_0=\{0\}$. Suppose we have done this
for some $k\geqslant 0$.
By construction, $w_{\nu_{k+1}}\tp v_\la\in V_{k+1}$. Consecutively applying
the Chevalley generators from the  block of $\l_-$ which does not vanish on $W_{k+1}$ we conclude
that $W_{k+1}\tp v_\la\subset V_{k+1}$. Induction on $k$ proves $W_k\tp v_\la\subset V_k$
for all $k$.

Finally, the equality $V_{l+2}= V_{l+3}$ follows from the inclusion $W_{l+3}\tp v_\la\subset V_{l+2}$,
and this boils down to the symmetric case.
Indeed, let $\g'=\g_{p+2}\subset \g$ be as defined in Section \ref{module_M_general_k}.
Let $\hat M_\la'\subset \hat M_\la$ be its parabolic Verma submodule generated by $v_\la$ and
let $V_{\ell+2}'$ be the $U_q(\g')$-submodule generated by $w_{\mu_{\ell+1}}\tp v_\la,w_{\mu_{\ell+2}}\tp v_\la$.
As we discussed in the symmetric case, $W_{\ell+3}\tp v_\la\subset V_{\ell+2}'\subset V_{\ell+2}$.
Hence $V_{l+3}=V_{l+2}$, and the proof is complete.
\end{proof}
\section{The matrix of quantum coordinate functions}
\label{secQCC}
Similarly to classical conjugacy classes, their quantum counterparts are described
through a matrix  $A$ of non-commutative "coordinate functions"
or its image $\Q\in  \End(\C^{N})\tp U_q(\g)$, which should be regarded as "restriction"
 of $A$ to the  "quantum group $G_q"$.  In this section we study algebraic properties of $\Q$.

The operator $\Q$ is defined  through the universal  R-matrix $\Ru$,
which is an invertible element of (completed) tensor square of  $U_\hbar(\g)$:
\be
\Q=(\pi\tp\id)(\Ru_{12}\Ru)\in \End(\C^{N})\tp U_q(\g).
\label{Q-matrix}
\ee
Here $\pi$ is the representation homomorphism $U_\hbar(\g)\to \End(\C^{N})$. The matrix $\Q$ commutes with
$(\pi\tp \id)\circ\Delta(u)$ for all $u\in U_q(\g)$
 producing an invariant operator on $\C^N\tp V$ for every $U_q(\g)$-module $V$.

Let $\rho$ denote the half-sum of all positive roots $\rho=\frac{1}{2}\sum_{\al\in \Rm_+}\al$.
In the orthogonal basis of weights $\{\ve_i\}$, it reads
$$
\rho=\sum_{i=1}^n \rho_i\ve_i,
\quad
\rho_i=\rho_1-(i-1),
\quad
\rho_1=
\left\{
\begin{array}{rcccl}
n-\frac{1}{2}&\mbox{for}& \g&=&\s\o(2n+1),\\
n-1&\mbox{for}& \g&=&\s\o(2n).\\
\end{array}
\right.
$$
Regarded as an operator on $\C^{N}\tp \hat M_\la$, the element $\Q$ satisfies a polynomial equation with the roots
$$
q^{2(\la+\rho,\nu_i)-2(\rho,\ve_1)+(\nu_i,\nu_i)-1}=
\left\{
\begin{array}{rccccl}
q^{2(\la,\nu_i)+2(\rho,\nu_i-\ve_1)}&\mbox{for}&p>0,&\\
q^{-2n}&\mbox{for}&p=0,&i=\ell+2,& \g=\s\o(2n+1).\\
\end{array}
\right.
.
$$
where $\nu_i$, $i=1,\ldots, 2\ell+3$  are the highest weights of the irreducible $\l$-submodules in $\C^{N}$, \cite{M2}, Theorem 4.2.
The bottom line corresponds to zero $\nu_i$, which is present only for odd $N$ if $p=0$.

Assuming $\la \in \mathfrak{C}_{\l,reg}^*$, put $\La_i=(\la,\ve_{n_1+\ldots+n_{i-1}+1})=(\la,\ve_{n_1+\ldots+n_{i}})$
for $i=1,\ldots,\ell+2$
(recall that $n_{\ell+1}=m$ and $n_{\ell+2}=p$, by our convention). The weight
$\la$ depends on the parameters $(\La_i)$, with $\La_{\ell+2}=0$.
Define the vector $\mub$ by
\be
\mu_i=q^{2\La_i-2({n_1+\ldots+n_{i-1}})}, \quad i=1,\ldots, \ell+2.
\label{mu_param}
\ee
The eigenvalues of $\Q$ on $\End(\C^{N}\tp \hat M_\la)$  are expressed through $\mub$ by
\be
\label{e_v in hatM_la}
\mu_i,\quad \mu_i^{-1}q^{-4\rho_1+2(n_i-1)}=\mu_i^{-1}q^{-2N+2(n_i+1)},\quad i=1,\ldots,\ell+1,\quad\mu_{\ell+2}.
\ee
As was mentioned the operator $\Q$ on $\C^N\tp \hat M_\la$ satisfies a polynomial equation of degree $2\ell+3$.
Formula (\ref{e_v in hatM_la}) implies that, at generic point $\la\in \mathfrak{C}_{\l,reg}^*$, the roots of the polynomial
are pairwise distinct for almost all $q$. Hence $\Q$ is semisimple for almost all $q$ at generic $\la$.
In particular, the eigenvalues $\mu_{\ell+1}$, $\mu_{\ell+2}$, and $\mu_{\ell+3}$ read
\be
\begin{array}{llllllllllllll}
 \mu_{\ell+1}&=&-q^{-2p}, &\mu_{\ell+2}&=&q^{-2m},&  \mu_{\ell+3}&=&-q^{-2n+2},&N=2n,\\
 \mu_{\ell+1}&=&-q^{-2p-1},& \mu_{\ell+2}&=&q^{-2m},&  \mu_{\ell+3}&=&-q^{-2n+1},&N=2n+1.
 \end{array}
 \label{-1+1-1}
\ee
Note that $\mu_{\ell+1}$ may be equal to $\mu_{\ell+3}$ only for $m=1$, which case is excluded from our consideration. In other words, the  minimal polynomial of $\Q$ remains semisimple on $\C^N\tp \hat M_\la$
for almost all $q$
upon specialization of $\la$ to  generic point of $\mathfrak{C}_{\k,reg}^*$. Therefore $\Q$ is semisimple on $\C^N\tp \hat M_\la$
and hence on $\C^N\tp  M_\la$ for an open set in $\mathfrak{C}_{\k,reg}^*$, for almost all $q$.

The vector $\mub$ belongs to $\hat \Mc_K$ modulo $\hbar$ for $\la\in \mathfrak{C}_{\k,reg}^*$. Recall that
$\hat \Mc_K$ parameterizes the moduli space $\Mc_K$ of conjugacy classes with given $K$.
\begin{propn}
\label{propn_evQM}
For $\la\in \mathfrak{C}_{\k,reg}^*$ the operator  $\Q$  satisfies a
polynomial equation of degree $2\ell+2$ on $\C^{N}\tp M_\la$ with the roots
\be
\mu_i,\quad \mu_i^{-1}q^{-2N+2(n_i+1)},\quad i=1,\ldots,\ell,\quad\mu_{\ell+1},\quad\mu_{\ell+2}.
\label{e_v in M_la}
\ee
\end{propn}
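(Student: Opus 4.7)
The plan is to combine Proposition \ref{filt_WM} with the $U_q(\g)$-invariance of $\Q$ built into the definition (\ref{Q-matrix}). Since $\Q$ commutes with the $U_q(\g)$-action on $\C^N\tp M_\la$, it preserves every submodule, and in particular each term of the ascending filtration $V_0\subset V_1\subset \ldots\subset V_{2\ell+3}=\C^N\tp M_\la$. Whenever $V_k/V_{k-1}$ is non-zero, Proposition \ref{filt_WM} presents it as a cyclic highest weight module generated by the image of $w_{\nu_k}\tp v_\la$, and a $U_q(\g)$-invariant endomorphism of such a module is forced to act as multiplication by a scalar $\mu^{(k)}$, determined by its value on the highest weight vector.

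Next I would identify $\mu^{(k)}$ with the entry of the list (\ref{e_v in hatM_la}) indexed by the $k$-th block of the $\l$-decomposition of $\C^N$. The point is that $w_{\nu_k}\tp v_\la$ already sits in $\C^N\tp\hat M_\la$ inside the parabolic Verma submodule $\hat M_k$, on which $\Q$ is known to act by exactly that scalar. The relation $\Q(w_{\nu_k}\tp v_\la)\equiv \mu^{(k)}\,w_{\nu_k}\tp v_\la$ modulo the preceding filtration step is preserved under the projection $\C^N\tp\hat M_\la\to \C^N\tp M_\la$, since $\hat V_{k-1}$ maps into $V_{k-1}$. Hence the same scalar $\mu^{(k)}$ governs the action on $V_k/V_{k-1}$.

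With these scalars in hand, the commuting factors $(\Q-\mu^{(k)})$ each send $V_k$ into $V_{k-1}$ (vacuously when $V_k=V_{k-1}$), so their product walks the whole filtration down to $V_0=\{0\}$ and annihilates $\Q$ on $\C^N\tp M_\la$. The payoff of Proposition \ref{filt_WM} is the equality $V_{\ell+3}=V_{\ell+2}$, which allows me to omit the factor $(x-\mu^{(\ell+3)})=(x-\mu_{\ell+1}^{-1}q^{-2N+2(m+1)})$ corresponding to the conatural copy of $\g\l(m)$ in $\C^N$; the remaining $2\ell+2$ scalars match exactly the list (\ref{e_v in M_la}). The only place where I expect to have to argue with a bit of care is the scalar-identification step above: $V_k/V_{k-1}$ need not equal $\hat M_k$, only a cyclic highest weight quotient of it, so one must verify that the $\Q$-action on $w_{\nu_k}\tp v_\la$ can be computed in the ambient $\C^N\tp\hat M_\la$ and transferred unchanged to the quotient, with all the ``error'' terms living safely in $V_{k-1}$.
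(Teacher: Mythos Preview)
Your approach is correct and shares the core idea with the paper: both rely on Proposition \ref{filt_WM} and the fact that the invariant operator $\Q$ acts by a scalar on each cyclic highest weight quotient $V_k/V_{k-1}$, with the scalar determined by the highest weight $\la+\nu_k$ via the formula from \cite{M2}. The paper's proof differs in the final maneuver: it starts from the degree-$(2\ell+3)$ polynomial identity on $\C^N\tp\hat M_\la$, invokes semisimplicity of $\Q$ for generic $q$ (distinct eigenvalues), and then cancels the invertible factor $(\Q-\mu_{\ell+3})$ once Proposition \ref{filt_WM} shows that eigenvalue is absent on $\C^N\tp M_\la$. Your ``walk down the filtration'' argument is more direct and does not need semisimplicity at all, which is a genuine simplification.

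One small imprecision: you say $w_{\nu_k}\tp v_\la$ lies in the summand $\hat M_k\subset \C^N\tp\hat M_\la$. That is not literally true (e.g.\ Lemma \ref{gl-sym} shows $u_{\nu_2}\equiv w_{m+1}\tp v_\la$ only modulo $\hat M_1$). What you actually need, and what you correctly state afterwards, is that $w_{\nu_k}\tp v_\la$ is a highest weight vector of weight $\la+\nu_k$ in $\hat V_k/\hat V_{k-1}$; the scalar is then fixed by the highest weight alone, so the identification with the $k$-th entry of (\ref{e_v in hatM_la}) goes through without ever placing $w_{\nu_k}\tp v_\la$ inside $\hat M_k$.
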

\begin{proof}
For generic $q$, the operator $\Q\in \End(\C\tp \hat M_\la)$ is semisimple, and
the roots (\ref{e_v in hatM_la}) are pairwise distinct. Therefore,
the projection of $\Q$ to $\End(\C\tp M_\la)$ is semisimple for almost all $q$ and satisfies the same polynomial equation. Since the graded components $V_{k}/V_{k-1}$ are highest weight modules,
the projection of $\Q$ is scalar on each $V_{k}/V_{k-1}$, which is one of the eigenvalues of $\Q$.  By Proposition \ref{filt_WM},
the eigenvalue $\mu_{\ell+3}$ drops from the spectrum of $\Q$ on $\C\tp M_\la$,
hence the simple
divisor $\Q-\mu_{\ell+3}$ is invertible and can be canceled from the polynomial.
\end{proof}

The matrix $\Q$ produces the center of $U_q(\g)$ via the q-trace construction.
For any invariant matrix $X\in \End(\C^{N})\tp \A$
with the entries in a $U_q (\g)$-module $\A$, one can define an invariant element
\be
\label{q-trace}
\Tr_{q}(X):=\Tr\bigl(q^{2h_\rho}X\bigr)\in \A.
\ee
Recall that $h_\rho$ is an element from $\h$ such that $\al(h_\rho)=(\al,\rho)$ for all $\al\in \h^*$.
The $q$-trace, when applied $X=\Q^k$, $k\in \Z_+$, gives
a series of central elements of $U_q (\g)$. We will use the shortcut notation $\tau_k=\Tr_q({\Q^k})$.

A module $M$ of highest weight $\la$ defines a one dimensional representation $\chi^\la$ of the center of  $U_q (\g)$,
which assigns a scalar to each  $\tau_\ell$, \cite{M2}, formula (24):
\be
\chi^\la(\tau_k)=
\sum_{\nu} q^{2k(\la+\rho,\nu)-2k(\rho,\ve_1)+k(\nu,\nu)-k}
\prod_{\al\in \Rm_+}\frac{ q^{(\la+\nu+\rho,\al)}-q^{-(\la+\nu+\rho,\al)}}{ q^{(\la+\rho,\al)}-q^{-(\la+\rho,\al)}}.
\label{char_V}
\ee
The summation is taken over  weights $\nu$ of the module $\C^{N}$. The term
$k(\nu,\nu)-k$ survives  for $\nu=0$, which is the case only for odd $N$.
Restriction of $\la$ to $\mathfrak{C}_{\k,reg}^*$ makes
the right hand side a function of $\mub$ defined in (\ref{mu_param}). We denote this function by
$\vt_{\nb,q}^k(\mub)$, where $\nb=(n_1,\ldots,n_\ell, m,p)$ is the integer valued vector of multiplicities.
In the limit $\hbar \to 0$ the function $\vt_{\nb,q}^k(\mub)$ goes over
into the right hand side of (\ref{tr_cl}), where
$\mu_i=\lim_{h\to 0}q^{2(\la,\nu_i)}$, $i=1,\ldots,\ell$.

\section{Quantum conjugacy classes of non-Levi type}
By quantization of a commutative $\C$-algebra $\A$ we understand a
$\C[\![\hbar]\!]$-algebra $\A_\hbar$, which is free as a $\C[\![\hbar]\!]$-module
and $\A_\hbar/\hbar\A_\hbar\simeq \A$ as a $\C$-algebra. Quantization is called $U_\hbar(\g)$-equivariant if
$\A$ and $\A_\hbar$ are, respectively $U(\g)$- and $U_\hbar(\g)$-module algebras and the $U_\hbar(\g)$-action
on $\A_\hbar$ is a deformation of the $U(\g)$-action on $\A$.
Below we describe
the quantization of $\C[G]$ along the Poisson bracket (\ref{poisson_br_sts}).

Recall from \cite{J} that the image of the universal R-matrix of the quantum group $U_\hbar(\g)$ in the
defining representation is equal, up to
a scalar factor, to
$$
R=\sum_{i,j=1 }^{N} q^{\delta_{ij}-\delta_{ij'}}e_{ii}\tp e_{jj}
  +
  (q-q^{-1})\sum_{i,j=1 \atop i>j}^{N}(e_{ij}\tp e_{ji}
- q^{\rho_i-\rho_j}
e_{ij}\tp e_{i'j'}).
$$
The coefficients $\rho_i$ are defined as
$\rho_{n+1}=0$, $\rho_i=-\rho_{i'}=(\rho,\ve_i)=n+\frac{1}{2}-i$ for $N=2n+1$ and
$\rho_i=-\rho_{i'}=(\rho,\ve_i)=n-i$ for $N=2n$, where $i$ runs over $1,\ldots, n$.

Denote by $S$ the $U_\hbar(\g)$-invariant operator $PR\in \End(\C^{N})\tp \End(\C^{N})$,
where $P$ is the ordinary flip of $\C^{N}\tp \C^{N}$.
This matrix has three invariant projectors to its eigenspaces,
among which there is a one-dimensional projector $\kappa$ to the
 trivial  $U_\hbar(\g)$-submodule, proportional to
$\sum_{i,j=1}^{N}q^{\rho_i-\rho_j} e_{i'j}\tp e_{ij'}.
$

Denote by  $\C_\hbar[G]$ the associative algebra generated by
the entries of a matrix $A=(A_{ij})_{i,j=1}^{N}\in \End(\C^{N})\tp \C_\hbar[G]$
modulo the relations
\be
S_{12}A_2S_{12}A_2=A_2S_{12}A_2S_{12}
,\quad A_2S_{12}A_2\kappa=q^{-{N}+1}\kappa=\kappa A_2S_{12}A_2.
\label{A-matrix}
\ee
These relations are understood in $\End(\C^{N})\tp \End(\C^{N})\tp \C_\hbar[G]$,
and the indices distinguish the two copies of $\End(\C^{N})$, in the usual
way.

The algebra $\C_\hbar[G]$ is a quantization of $\C[G]$ along the Poisson bracket
(\ref{poisson_br_sts}).
 It carries a $U_\hbar(\g)$-action, which is  a deformation of the conjugation action of
$U(\g)$ on $\C[G]$. This action is determined by the requirements that
$A$ commutes with $(\pi\tp \id)\circ\Delta U_\hbar(\g)$ in the tensor product $\End(\C^{N})\tp \C_\hbar[G]\rtimes U_\hbar(\g)$,
where $\pi\colon   U_\hbar(\g)\to \End(\C^{N})$ is the representation homomorphism.
It is important that $\C_\hbar[G]$ can be realized as a $U_\hbar(\g)$-invariant subalgebra in $U_q(\g)$,
with respect to the adjoint action.
The embedding is implemented via the assignment
$$
 \End(\C^{N})\tp \C_\hbar[G]\ni
 A\mapsto \Q\in \End(\C^{N})\tp  U_q(\g).
$$

The following properties of $\C_\hbar[G]$  will be of importance.
Denote by $I_\hbar(G)\subset \C_\hbar[G]$ the subalgebra of $U_\hbar(\g)$-invariants, which
also coincides with the center of $\C_\hbar[G]$. For $N=2n+1$ it is generated by the
q-traces $\Tr_q(\A^l)$, $l=1,\ldots, N$.
Not all traces are independent, but that is immaterial for this consideration.
Traces of $A^l$ are not enough for $N=2n$,
and one should add one more invariant $\tau^-$ in order to get entire $I_\hbar(G)$. On a module of highest
weight $\la$, this invariant returns $\chi^\la(\tau^-)=\prod^n_{i=1}(q^{2(\la+\rho,\ve_i)}-q^{-2(\la+\rho,\ve_i)})$, see
Proposition 7.4, \cite{M2}. However, it vanishes on modules with highest weight $\la \in \mathfrak{C}_{\l}^*$, so we take no care of it.

\begin{thm}
\label{QCC}
Suppose that $\la=\mathfrak{C}_{\k,reg}^*$ is admissible, and let $\mub$ be as in (\ref{mu_param}).
The quotient of $\C_\hbar[G]$ by the ideal of relations
\be
\prod_{i=1}^{\ell}(\Q-\mu_i)\times (\Q-\mu_{\ell+1})(\Q-\mu_{\ell+2})\times
\prod_{i=1}^{\ell}(\Q-\mu_i^{-1}q^{-2N+2(n_i+1)})=0,
\label{q-min_pol}
\ee
\be
\Tr_q(\Q^k)=\vt_{\nb,q}^k(\mub)
\label{q-traces}
\ee
is an equivariant quantization of the class $\lim_{\hbar\to 0}\mub=\mub^0\in\hat \Mc_K$.
It is the image of $\C_\hbar[G]$ in the algebra of endomorphisms
of the $U_q(\g)$-module $M_\la$.
\end{thm}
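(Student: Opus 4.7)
The plan is to establish the theorem in three logical steps: first verify that the proposed relations actually hold on $M_\la$, then identify the classical limit of the quotient, and finally upgrade the resulting surjection onto the image in $\End(M_\la)$ to an isomorphism by a flatness argument. The key technical inputs are already in place: Proposition \ref{propn_evQM} for the polynomial relation, the character formula (\ref{char_V}) for the trace relations, and Theorem \ref{prop_clas_so} for the classical description of the ideal of $O$.

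For the first step, note that via the embedding $A\mapsto\Q$ the quotient algebra sits inside $\End(M_\la)$ provided the defining relations are satisfied there. The polynomial identity (\ref{q-min_pol}) on $\Q$ viewed as an operator on $\C^N\otimes M_\la$ is exactly the content of Proposition \ref{propn_evQM}, once one observes that the roots $\mu_i$ and $\mu_i^{-1}q^{-2N+2(n_i+1)}$ are paired as stated and $\mu_{\ell+1}$, $\mu_{\ell+2}$ enter as in (\ref{-1+1-1}). The trace relations (\ref{q-traces}) then follow automatically: each $\tau_k=\Tr_q(\Q^k)$ is central in $\C_\hbar[G]$, so it acts as a scalar on the highest weight module $M_\la$; by the standard computation (\ref{char_V}) this scalar equals $\vt^k_{\nb,q}(\mub)$ under the parametrization (\ref{mu_param}). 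Hence there is a natural surjection from the proposed quotient onto the image of $\C_\hbar[G]$ in $\End(M_\la)$.

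For the second step, I would pass to the classical limit $\hbar\to 0$. Under (\ref{mu_param}) the eigenvalues $\mu_i$ tend to $\mu_i^0$ with $\mu_{\ell+1}^0=-1$, $\mu_{\ell+2}^0=1$, and $\mu_i^{-1}q^{-2N+2(n_i+1)}\to (\mu_i^0)^{-1}$, so that the quantum minimal polynomial (\ref{q-min_pol}) degenerates exactly to (\ref{min_pol_cl}). Similarly $\vt^k_{\nb,q}(\mub)$ specializes to the right-hand side of (\ref{tr_cl}). Since $\C_\hbar[G]/\hbar\C_\hbar[G]\simeq \C[G]$ already contains the orthogonality relations (\ref{ideal_group}), Theorem \ref{prop_clas_so} identifies the classical limit of the quotient with $\C[O]$, where $O$ is the class with invariants $(\nb,\mub^0)$.

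The main obstacle, as always in such constructions, is flatness of the deformation, i.e.\ ruling out that extra relations appear in the quantum quotient beyond those listed. My approach is the standard deformation-theoretic one: the classical quotient $\C[O]$ is the coordinate ring of the class and has a known Hilbert function on each $U(\g)$-isotypic component; the quantum quotient is a $\C[\![\hbar]\!]$-module with semiclassical limit surjecting onto $\C[O]$, so its isotypic components have rank at least equal to the classical dimensions. Conversely, because $M_\la$ is a flat deformation (by the Proposition following Lemma \ref{q-root_vec}) of its classical counterpart associated with the $K$-equivariant realization, the image in $\End(M_\la)$ is also a flat deformation whose classical limit equals $\C[G/K]=\C[O]$ via the orbit method identification. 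This forces the surjection from the proposed quotient to the image to be an isomorphism in each isotypic component modulo $\hbar$, hence an isomorphism of topologically free $\C[\![\hbar]\!]$-modules by Nakayama; equivariance is automatic since all relations are $U_\hbar(\g)$-invariant. I expect the subtlest point to be a clean verification that no isotypic component of the image drops rank at $\hbar=0$, for which one can mimic the argument used in \cite{M5} for the symplectic case, transplanted via the analogue of Proposition \ref{filt_WM} that controls how $\C^N\otimes M_\la$ decomposes.
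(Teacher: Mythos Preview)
Your three-step outline matches the paper's argument: establish the relations on $M_\la$ via Proposition \ref{propn_evQM} and (\ref{char_V}), identify the classical limit via Theorem \ref{prop_clas_so}, and close with Nakayama. The gap is in your flatness step. You assert that the image of $\C_\hbar[G]$ in $\End(M_\la)$ is itself a flat deformation whose classical limit equals $\C[G/K]$, invoking an ``orbit method identification''; but no such identification is available in this setting, and freeness of $M_\la$ by itself does not control the size of the image modulo $\hbar$. You correctly flag this as the subtle point, but the remedy you suggest---transplanting something via Proposition \ref{filt_WM}---does not address it: that proposition concerns the structure of $\C^N\otimes M_\la$, not the image of $\C_\hbar[G]$ in $\End(M_\la)$, so it cannot supply the missing lower bound on the image.

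The paper avoids computing the classical limit of the image altogether by comparing kernels and invoking a purely classical fact. One first quotients $\C_\hbar[G]$ only by the trace relations (\ref{q-traces}), obtaining $\C_\hbar[G]/J_\la$, which is free over $\C[\![\hbar]\!]$ with finite-rank isotypic components (this is the external input from \cite{M1}). The homomorphism $\Psi\colon\C_\hbar[G]/J_\la\to\End(M_\la)$ then has free image and hence free kernel, so that $(\ker\Psi)_0=\ker\Psi/\hbar\ker\Psi$ embeds into the classical limit of $\C_\hbar[G]/J_\la$. Now $(\ker\Psi)_0$ is a proper $G$-invariant ideal containing $J_0$, the classical limit of the ideal generated by the polynomial relation (\ref{q-min_pol}); and by Theorem \ref{prop_clas_so}, $J_0$ is the defining ideal of the closed orbit $O$. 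Because $O$ is a single closed $G$-orbit, its defining ideal is \emph{maximal} among proper invariant ideals, which forces $(\ker\Psi)_0=J_0$; Nakayama applied to $\ker\Psi/J$ (isotypic component by isotypic component) then gives $\ker\Psi=J$. Thus maximality of the orbit ideal is the device that replaces your unproved lower bound on the image.
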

\begin{proof}
The proof is similar to \cite{M5}, Theorem 10.1. and \cite{M2}, Theorem 8.2., and we give its sketch  here. It is based on equivariant homomorphism from
$\C_\hbar[G]$ to $\End(M_\la)\subset \End(M_\la)\tp \C(\!(\hbar)\!)$, where the extension of $M_\la$  by Laurent series in $\hbar$ is taken to enable the $U_\hbar(\g)$-action. While
$M_\la$ is only a $U_q(\g)$- but not a $U_\hbar(\g)$-module, $\End(M_\la)$ is $U_\hbar(\g)$-invariant as well as the image of $\C_\hbar[G]$ in it, see \cite{M5} for details.
This homomorphism factors through a homomorphism $\Psi\colon \C_\hbar[G]/J_\la\to \End(M_\lambda)$, where $J_\la$ is the ideal generated by the kernel
of the central character $\chi^\la$; it  is defined by the relations (\ref{q-traces}).
The $U_\hbar(\g)$-algebra $\C_\hbar[G]/J_\la$ is a direct sum of isotypical components
of finite rank over $\C[\![\hbar]\!]$, which follows from  \cite{M1},
Theorem 5.4. Therefore, the image of $\Psi$
is  free over $\C[\![\hbar]\!]$. It can be shown that the algebra $\C_\hbar[G]/J_\la$ is free over $\C[\![\hbar]\!]$,
hence $(\ker \Psi)_0=\ker \Psi/ \hbar(\ker \Psi) $ is isomorphically embedded in $\C_\hbar[G]/J_\la\mod \hbar$.
The kernel $\ker \Psi$ contains the ideal $J$ generated by the entries of the matrix
polynomial in the left-hand-side of (\ref{q-min_pol}). In the classical limit, $J$ goes over to the defining ideal
of the conjugacy class, by Theorem \ref{prop_clas_so}. The latter is a maximal proper invariant ideal
and hence  equal to $(\ker \Psi)_0$. Therefore, the embedding  $J\subset \ker \Psi$ is an isomorphism by the Nakayama lemma.
\end{proof}
Theorem \ref{QCC} describes quantization in terms of the matrix $\Q$, which is the image of the matrix $A$.
To obtain the description in terms of $A$, one should replace $\Q$ with $A$ in (\ref{q-min_pol}) and (\ref{q-traces})
and add the relations  (\ref{A-matrix}).

The quantization we have constructed is equivariant with respect to the standard or Drinfeld-Jimbo quantum group
$U_\hbar(\g)$. Other quantum groups are obtained from standard $U_\hbar(\g)$ by twist, \cite{ESS}.
Formulas (\ref{q-min_pol}) and (\ref{q-traces}) are valid for
any quantum group  $U_\hbar(\g)$ upon the following modifications. The matrix $\Q$ is expressed through the universal R-matrix as
usual. The q-trace should be redefined
as $\Tr_q(X)=q^{1+2(\rho,\ve_1)}\Tr\Bigl(\pi\bigl(\gm^{-1}(\Ru_1)\Ru_2\bigr)X\Bigr)=q^{N-1}\Tr\Bigl(\pi\bigl(\gm^{-1}(\Ru_1)\Ru_2\bigr)X\Bigr)$.
This can be verified along the lines of \cite{MO}.

\vspace{0.5in}
{\bf Acknowledgements}.
The author is extremely grateful to the Max-Planck Institute for Mathematics in Bonn, where this work has been done, for warm hospitality and
excellent research atmosphere.
This study is also supported in part by the RFBR grant 09-01-00504.


\begin{thebibliography}{A}
\bibitem{M3} Mudrov, A.: Quantum sphere $\mathbb{S}^4$ as a non-Levi conjugacy class.  Lett. Math. Phys. {\bf 101}, 157--172 (2012).
\bibitem{M5} Mudrov, A.: Non-Levi closed conjugacy classes of $SP_q(N)$.  Commun. Math. Phys., to appear,
DOI: 10.1007/s00220-012-1616-7, math.arXiv:1110.2630.
\bibitem{M2} Mudrov, A.:
Quantum conjugacy classes of simple matrix groups. Commun. Math. Phys. {\bf 272} (2007)  635 -- 660.
\bibitem{DM1} Donin, J., Mudrov, A.:  Explicit  equivariant quantization on coadjoint orbits of  GL(n).
Lett. Math. Phys. {\bf 62} (2002) 17--32.
\bibitem{AM} Ashton, T., Mudrov, A.: Quantization of borderline orthogonal conjugacy classes, to appear.
\bibitem{St} Springer,  T.: Conjugacy classes in algebraic groups.
Lect. Not. Math. {\bf 1185} (1984) 175--209.
\bibitem{FZ} Freund, P., Zabrodin, A.: Z(n) Baxter models and quantum symmetric spaces.
Phys. Lett. {\bf B 284} (1982) 283--288.
\bibitem{K} Koornwinder, T.: Askey-Wilson polynomials as zonal spherical functions
on the SU(2) quantum group. SIAM J. Math. Anal. {\bf 24}
 (1993) 795–-813.
\bibitem{N} Noumi, M.: Macdonald's symmetric polynomials as zonal spherical functions
on some quantum homogeneous spaces. Advances in Mathematics {\bf 123}
(1996)  16--77.
\bibitem{L3} Letzter, G.: Quantum Zonal Spherical Functions and Macdonald Polynomials.
Advances in Mathematics {\bf 189} (2004) 88--147.
\bibitem{NS} Noumi, M., Sugitani, T.: Quantum symmetric spaces and related qorthogonal
polynomials.  World Sci. Publishing, River Edge, N.J. (1995) 28--
40.
\bibitem{L1} Letzter,  G.: Symmetric pairs for quantized enveloping algebras, J. Algebra
{\bf 220} (1999)  729--767.
\bibitem{L2} Letzter, G.: Coideal Subalgebras and Quantum Symmetric Pairs.
 MSRI publications {\bf 43}, Cambridge University Press (2002) 117--166.
\bibitem{L4} Letzter,  G.: Quantum Symmetric Pairs and Their Zonal Spherical Functions.
Transformation Groups {\bf 8}  (2003) 261--292.

\bibitem{D} Drinfeld,  V.:
   Quantum Groups. In Proc. Int. Congress of Mathematicians,
  Berkeley 1986, Gleason,  A. V.  (eds), AMS, Providence (1987) 798--820.
\bibitem{STS}  Semenov-Tian-Shansky, M.: Poisson-Lie Groups,
Quantum Duality Principle, and the Quantum Double.
  Contemp. Math. {\bf 175} (1994) 219--248.
\bibitem{BD}  Belavin, A.  and Drinfeld, V.: Triangle equations and simple Lie algebras.
In Classic Reviews in Mathematics and Mathematical Physics {\bf 1}, Harwood Academic Publishers, Amsterdam (1998).
\bibitem{CP} Chari, V., Pressley, A.: A guied to quantum groups. Cambridge University Press, Cambridge, 1995.
\bibitem{Ja}  Jantzen, J.:
Lectures on quantum groups.
Grad. Stud. in Math., {\bf 6}
AMS, Providence, RI (1996).
\bibitem{J} Jimbo, M.: Quantum $R$-matrix for the generalized Toda system.
Commun. Math. Phys. {\bf 102} (1986) 537--548.
\bibitem{M1} Mudrov, A.:
On quantization of Semenov-Tian-Shansky Poisson bracket on
simple algebraic groups.
Algebra \& Analyz  {\bf 5}, (2006) 156--172.
\bibitem{ESS} Etingof, P., Schiffmann,  O., and Schedler, T.:
Explicit quantization of dynamical r-matrices for finite dimensional
semi-simple Lie algebras.
J. AMS {\bf 13}  (2000) 595--609.
\bibitem{MO} Mudrov,  A., Ostapenko, V.:
Quantization of orbit bundles in $gl(n)^*$. Isr. J. Math. {\bf 172} (2009)  399--423.

\bibitem{Ke} M. K\'{e}b\'{e}:
{\em  ${\O}$-alg\`{e}bres quantiques}, C. R. Acad. Sci. Paris Sér. I Math. {\bf 322}, 1--4  (1996).


\end{thebibliography}
\end{document}